\documentclass[10pt,11pt]{amsart}%
\usepackage[dvips]{graphicx}
\usepackage{amsmath}
\usepackage{color}
\usepackage{amsfonts}
\usepackage{amssymb}
\usepackage{version}%
\setcounter{MaxMatrixCols}{30}
%TCIDATA{OutputFilter=latex2.dll}
%TCIDATA{Version=5.50.0.2953}
%TCIDATA{CSTFile=amsart.cst}
%TCIDATA{LastRevised=Monday, June 22, 2015 11:10:59}
%TCIDATA{<META NAME="GraphicsSave" CONTENT="32">}
%TCIDATA{<META NAME="SaveForMode" CONTENT="1">}
%TCIDATA{BibliographyScheme=Manual}
%TCIDATA{Language=American English}
%BeginMSIPreambleData
\providecommand{\U}[1]{\protect\rule{.1in}{.1in}}
%EndMSIPreambleData
\providecommand{\U}[1]{\protect\rule{.1in}{.1in}}
\providecommand{\U}[1]{\protect\rule{.1in}{.1in}}
\providecommand{\U}[1]{\protect\rule{.1in}{.1in}}
\providecommand{\U}[1]{\protect\rule{.1in}{.1in}}
\providecommand{\U}[1]{\protect\rule{.1in}{.1in}}
\providecommand{\U}[1]{\protect\rule{.1in}{.1in}}
\textheight 220mm
\textwidth 160mm
\hoffset -20mm
\voffset -15mm

\newtheorem{theorem}{Theorem}[section]

\newtheorem{corollary}[theorem]{Corollary}

\newtheorem{example}[theorem]{Example}

\newtheorem{lemma}[theorem]{Lemma}

\newtheorem{proposition}[theorem]{Proposition}
\newtheorem{remark}[theorem]{Remark}
\let\oldremark\remark
\renewcommand{\remark}{\oldremark\normalfont}

\setcounter{tocdepth}{2}
\begin{document}
\title[Additive methods in associative algebras]{Additive combinatorics methods in associative algebras}
\date{June, 2015}
\author{Vincent Beck and C\'{e}dric Lecouvey}

\begin{abstract}
We adapt methods coming from additive combinatorics in groups to the study of
linear span in associative unital algebras. In particular, we establish for
these algebras analogues of Diderrich-Kneser's and Hamidoune's theorems on
sumsets and Tao's theorem on sets of small doubling. In passing we classify
the finite-dimensional algebras over infinite fields with finitely many
subalgebras. These algebras play a crucial role in our linear version of
Diderrich-Kneser's theorem. We also explain how the original theorems for
groups we linearize can be easily deduced from our results applied to group
algebras. Finally, we give lower bounds for the Minkowski product of two
subsets in finite monoids by using their associated monoid algebras.

\end{abstract}
\maketitle

\section{Introduction}

In this paper, we first establish analogues of theorems in additive
combinatorics on groups for a wide class of associative unital algebras. Next
we explain how this algebra setting permits to recover the original results on
groups and their analogues in fields but also yields similar lower bounds for
the Minkowski product of two subsets in monoids. Our results and tools mix
additive number theory, combinatorics, linear and commutative algebra and
basics considerations on Banach algebras.

Given $A$ and $B$ two non empty sets of a given group $G$, a classical problem
in additive combinatorics is to evaluate the cardinality $\left\vert
AB\right\vert $ of the Minkowski product $AB=\{ab\mid a\in A,b\in B\}$ in
terms of the cardinalities $\left\vert A\right\vert $ and $\left\vert
B\right\vert $.\ There exists a wide literature on this subject, notably a
famous result by Kneser (see \cite{Gr}, \cite{Nath}).

\begin{theorem}
[Kneser]Let $A$ and $B$ be finite subsets of the \emph{abelian} group $G$.
Then%
\[
\left\vert AB\right\vert \geq\left\vert A\right\vert +\left\vert B\right\vert
-\left\vert H\right\vert
\]
where $H=\{h\in G\mid hAB=AB\}$ is the stabilizer of $AB$ in $G.$
\end{theorem}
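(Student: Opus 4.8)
The plan is to run an induction driven by the Dyson transform (the $e$-transform), the standard device behind Cauchy--Davenport and Kneser type inequalities. Since $A$ and $B$ are finite we lose nothing by replacing $G$ with the subgroup they generate, and after translating $A$ and $B$ we may assume $1\in A\cap B$; neither operation affects $\left\vert A\right\vert $, $\left\vert B\right\vert $, $\left\vert AB\right\vert $ or the stabilizer $H$. We argue by induction on $\left\vert B\right\vert $. If $\left\vert B\right\vert =1$ then $AB$ is a translate of $A$, so $H$ is the stabilizer of $A$ and $\left\vert AB\right\vert =\left\vert A\right\vert =\left\vert A\right\vert +\left\vert B\right\vert -1\geq \left\vert A\right\vert +\left\vert B\right\vert -\left\vert H\right\vert $. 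We may also assume $\left\vert AB\right\vert <\left\vert A\right\vert +\left\vert B\right\vert $, since otherwise the bound is trivial.

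For the inductive step, recall the transform: for $e\in A$ set $A_{e}=A\cup eB$ and $B_{e}=B\cap e^{-1}A$. Then $eB_{e}=eB\cap A$, so $\left\vert A_{e}\right\vert +\left\vert B_{e}\right\vert =\left\vert A\right\vert +\left\vert B\right\vert $; moreover $A_{e}B_{e}\subseteq AB$ since, by commutativity, $A\,B_{e}\subseteq AB$ and $eB\cdot B_{e}\subseteq eB\cdot e^{-1}A=AB$; and $1\in B_{e}\subseteq B$. Two cases occur. If $B_{e}=B$ for every $e\in A$, i.e.\ $eB\subseteq A$ for all $e\in A$, then $AB\subseteq A$, hence $AB=A$ because $1\in B$, hence each $b\in B$ satisfies $Ab\subseteq A$ and so fixes $A=AB$; thus $B\subseteq H$ and $\left\vert AB\right\vert =\left\vert A\right\vert \geq \left\vert A\right\vert +\left\vert B\right\vert -\left\vert H\right\vert $, as wanted. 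Otherwise there is $e\in A$ with $1\leq \left\vert B_{e}\right\vert <\left\vert B\right\vert $; applying the induction hypothesis to $(A_{e},B_{e})$ gives $\left\vert AB\right\vert \geq \left\vert A_{e}B_{e}\right\vert \geq \left\vert A_{e}\right\vert +\left\vert B_{e}\right\vert -\left\vert H_{e}\right\vert =\left\vert A\right\vert +\left\vert B\right\vert -\left\vert H_{e}\right\vert $, where $H_{e}$ is the stabilizer of $A_{e}B_{e}$.

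The delicate point --- and the real content of Kneser's theorem --- is that $H_{e}$ may be strictly larger than $H$, so this last inequality is a priori too weak. The remedy is not to use a single transform but to exploit that a non-degenerate transform always makes the stabilizer grow: when $\left\vert H_{e}\right\vert >\left\vert H\right\vert $ one studies $AB$, which now contains the $H_{e}$-periodic set $A_{e}B_{e}$, and passes to the quotient $G/H_{e}$ --- equivalently, one enlarges $A$ and $B$ to unions of $H_{e}$-cosets --- and re-runs the argument there; since $\left\vert H_{e}\right\vert $ divides $\left\vert A_{e}B_{e}\right\vert \leq \left\vert AB\right\vert <\left\vert A\right\vert +\left\vert B\right\vert $ the stabilizer cannot keep growing, and the process terminates in the degenerate case already handled. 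Carrying out this bookkeeping --- tracking how $H$, the set $AB$ and the successive transformed pairs constrain one another --- is the only genuine work, and it is exactly this part that the linear analogues obtained later in the paper have to reproduce, with dimensions and periodicity over subalgebras in place of cardinalities and periodicity over subgroups.
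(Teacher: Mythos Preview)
Your approach is \emph{not} the paper's. The paper does not prove Kneser's theorem directly in the group: it proves the linear analogue (Theorem~\ref{Th_KL_Acom}) for algebras satisfying \textbf{H}$_{\mathrm{w}}$, and then in Section~6 specialises to the group algebra $\mathbb{C}[\mathbb{G}]$. The dictionary is $\left\vert A\right\vert=\dim_{\mathbb{C}}\overline{A}$, Lemma~\ref{Lem_stab} identifies the group stabiliser $H$ with the algebra stabiliser $\mathcal{H}$, Lemma~\ref{Lem_alg_gr} supplies the ``finitely many subalgebras'' hypothesis, and Lemma~\ref{Lem_alg_gr_banach} gives \textbf{H}$_{\mathrm{w}}$. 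It is worth noting that the linear Theorem~\ref{Th_KL_Acom} is itself proved via an $e$-transform (Lemma~\ref{Lem_diff}: $\mathrm{k}\langle A(e)\rangle=\mathrm{k}\langle A\rangle\cap\mathrm{k}\langle B\rangle e^{-1}$, $\mathrm{k}\langle B(e)\rangle=\mathrm{k}\langle B\rangle+\mathrm{k}\langle A\rangle e$), so at bottom the mechanism is the same as yours; but the paper resolves the ``wrong stabiliser'' issue by an entirely different device --- the Vandermonde trick of Lemma~\ref{Lemma_VDM} combined with the hypothesis that $\mathbb{A}(A)$ has only finitely many finite-dimensional subalgebras, which forces infinitely many of the auxiliary subalgebras $\mathcal{A}_{\alpha}$ to coincide and hence to stabilise all of $\mathrm{k}\langle AB\rangle$.

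Your direct argument, by contrast, is the classical route, but as written it has a genuine gap in the last paragraph. After the transform you obtain $\left\vert AB\right\vert \geq\left\vert A\right\vert+\left\vert B\right\vert-\left\vert H_{e}\right\vert$ with $H_{e}$ the stabiliser of $A_{e}B_{e}$, and you correctly flag that $H_{e}$ may strictly contain $H$. Your proposed fix --- ``pass to the quotient $G/H_{e}$, equivalently enlarge $A$ and $B$ to unions of $H_{e}$-cosets, and re-run the argument'' --- does not work as stated: $H_{e}$ stabilises $A_{e}B_{e}$, not $AB$, so $(H_{e}A)(H_{e}B)=H_{e}AB$ may properly contain $AB$, and the product you are trying to bound has changed. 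The termination claim (``$\left\vert H_{e}\right\vert$ divides $\left\vert A_{e}B_{e}\right\vert<\left\vert A\right\vert+\left\vert B\right\vert$, so the stabiliser cannot keep growing'') only bounds $\left\vert H_{e}\right\vert$, it does not by itself produce a finite descent that lands in the degenerate case with the \emph{correct} stabiliser. The standard completions of this argument (Kneser's original, or the versions in Mann or Nathanson) require a strengthened inductive statement --- for instance proving directly that $\left\vert AB\right\vert\geq\left\vert HA\right\vert+\left\vert HB\right\vert-\left\vert H\right\vert$, reducing first to the aperiodic case, and then handling the transform more carefully --- and that is precisely the ``bookkeeping'' you defer. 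So you have correctly located the difficulty but not discharged it.
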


This theorem does not hold for non abelian groups and the question of finding
lower and upper bounds for product sets becomes then considerably more
difficult. Nevertheless, there exist in this case numerous weaker
results.\ Let us mention among them those of Diderrich \cite{Die}, Olson
\cite{Ols} and Tao \cite{Tao}, \cite{Tao3} we shall evoke in more details in
Section \ref{Sec_Kneser}.

Analogous estimates exist in the context of fields and division rings. As far
as we are aware, this kind of generalizations was considered for the first
time in \cite{Xian} and \cite{Kain}. Consider $K$ a field extension of the
field $\mathrm{k}$ and $A$ a finite subset in $K$.\ Write $\mathrm{k}\langle
A\rangle$ for the $\mathrm{k}$-subspace of $K$ generated by $A$ and let
$\dim_{\mathrm{k}}(A)$ be its dimension. For $A,B$ two finite subsets of $K$,
we set $AB=\{ab\mid a\in A,b\in A\}$. Then $\dim_{\mathrm{k}}(AB)$, the
dimension of $\mathrm{k}\langle AB\rangle$, is finite. The following analogue
of Kneser's theorem for fields is proved in \cite{Xian} and \cite{Kain}.

\begin{theorem}
Let $K$ be a commutative extension of $\mathrm{k}$. Assume every algebraic
element in $K$ is separable over $\mathrm{k}$.\ Let $A$ and $B$ be two
nonempty finite subsets of $K^{\ast}$. Then
\begin{equation}
\dim_{\mathrm{k}}(AB)\geq\dim_{\mathrm{k}}(A)+\dim_{\mathrm{k}}(B)-\dim
_{\mathrm{k}}(H) \label{ineqLK}%
\end{equation}
where $H:=\{h\in K\mid h\mathrm{k}\langle AB\rangle\subseteq\mathrm{k}\langle
AB\rangle\}$.
\end{theorem}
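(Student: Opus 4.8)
The plan is to adapt Kneser's original transform-and-induct strategy to the linear setting, working with $\mathrm{k}$-subspaces of $K$ in place of subsets of an abelian group. First I would reduce to the case where $A$ and $B$ are themselves $\mathrm{k}$-subspaces: replace $A$ by $V := \mathrm{k}\langle A\rangle$ and $B$ by $W := \mathrm{k}\langle B\rangle$, noting that $\mathrm{k}\langle AB\rangle = VW$ (the span of all products $vw$), that $\dim_{\mathrm{k}}(A) = \dim_{\mathrm{k}} V$ and $\dim_{\mathrm{k}}(B) = \dim_{\mathrm{k}} W$, and that $H$ is unchanged. Then $H$ becomes $\{h \in K \mid hVW \subseteq VW\}$, which, since $K$ is a commutative field (so every nonzero element is invertible and $\dim_{\mathrm{k}} VW$ is finite), is actually a subfield of $K$ containing $\mathrm{k}$; moreover $VW$ is an $H$-subspace. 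The inequality to prove becomes $\dim_{\mathrm{k}}(VW) \geq \dim_{\mathrm{k}} V + \dim_{\mathrm{k}} W - \dim_{\mathrm{k}} H$.

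Next I would run an induction on $\dim_{\mathrm{k}} V + \dim_{\mathrm{k}} W$, mimicking the $e$-transform from the group case. The base case is when $V$ or $W$ has dimension $1$, say $V = \mathrm{k}a$; then $VW = aW$ has the same dimension as $W$, and one checks $H \supseteq \mathrm{k}$, so the bound reads $\dim_{\mathrm{k}} W \geq 1 + \dim_{\mathrm{k}} W - \dim_{\mathrm{k}} H$, i.e. $\dim_{\mathrm{k}} H \geq 1$, which holds. For the inductive step, pick a nonzero $a \in A$ and normalize so $1 \in A$ (divide everything by $a$; this does not change any of the relevant dimensions or the subfield $H$). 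If $W$ is already an $H$-module with $VW$ stabilized exactly by $H$, one argues directly that $VW \supseteq V + W$ suitably arranged gives the bound via the dimension formula $\dim(V+W) = \dim V + \dim W - \dim(V \cap W)$ together with control of $V \cap W$ by $H$. Otherwise, one performs a transform: choose $b \in W \setminus \mathrm{k}\langle \text{something}\rangle$ and replace the pair $(V, W)$ by $(V + bW', \, W'')$ for appropriate subspaces, decreasing $\dim W$ while not decreasing $\dim(VW)$, and apply the induction hypothesis.

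The hard part, and the place where the separability hypothesis enters, will be controlling the subfield $H$ and ensuring the transform terminates correctly: in the group case one uses that $H$ is a genuine subgroup and that $HAB = AB$, whereas here one needs that $H\langle AB\rangle = \langle AB\rangle$ \emph{and} that $H$ is a \emph{separable} (hence in good cases étale) extension of $\mathrm{k}$, so that intermediate fields behave well under the intersection and sum operations and so that dimension counts over $\mathrm{k}$ pass correctly to dimension counts over $H$. Without separability one can build counterexamples using purely inseparable extensions where $\dim_{\mathrm{k}}$ of a product collapses; the separability assumption is exactly what rules this out and lets the induction close. I would therefore expect the bulk of the work to be: (i) proving $H$ is a subfield and $VW$ an $H$-subspace; (ii) setting up the linear analogue of the Kneser transform and checking it strictly decreases the induction parameter while preserving the hypotheses (in particular separability, which is automatic since all fields involved sit inside $K$); and (iii) the endgame dimension bookkeeping relating $\dim_{\mathrm{k}}$ and $\dim_H$.
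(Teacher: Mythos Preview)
Your outline has the transform-and-induct philosophy but misses the decisive step and misidentifies where separability enters. The correct linear transform (the one in the paper's Lemma~\ref{Lem_diff}, and in \cite{Xian}) replaces $(\mathrm{k}\langle A\rangle,\mathrm{k}\langle B\rangle)$ by $(\mathrm{k}\langle A\rangle \cap \mathrm{k}\langle B\rangle e^{-1},\ \mathrm{k}\langle B\rangle + \mathrm{k}\langle A\rangle e)$ for an invertible $e\in\mathrm{k}\langle B\rangle$; this \emph{preserves} $\dim_{\mathrm{k}}A+\dim_{\mathrm{k}}B$, so your proposed induction on that sum cannot progress --- one inducts on $\dim_{\mathrm{k}}A$ alone. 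More importantly, the induction does not deliver the stabilizer $H$ of $\mathrm{k}\langle AB\rangle$. What it actually yields is, for each invertible $a\in\mathrm{k}\langle A\rangle$, \emph{some} intermediate field $\mathcal{A}_a$ and \emph{some} subspace $V_a\subseteq\mathrm{k}\langle AB\rangle$ containing $a\,\mathrm{k}\langle B\rangle$, with $\mathcal{A}_a V_a=V_a$ and $\dim_{\mathrm{k}}V_a+\dim_{\mathrm{k}}\mathcal{A}_a\geq\dim_{\mathrm{k}}A+\dim_{\mathrm{k}}B$. There is no reason a priori that $\mathcal{A}_a$ stabilizes all of $\mathrm{k}\langle AB\rangle$, so you cannot yet compare it to $H$.

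The missing mechanism is a pigeonhole over intermediate fields, and this is exactly where separability is used. Separability (via the primitive element theorem) guarantees that there are only \emph{finitely many} finite subextensions of $\mathrm{k}$ inside $\mathbb{A}(A)$. One then runs the lemma along a Vandermonde family $x_\alpha=x_1+\alpha x_2+\cdots+\alpha^{n-1}x_n$ of invertibles in $\mathrm{k}\langle A\rangle$ (Lemma~\ref{Lemma_VDM}); by finiteness, infinitely many $\alpha$ give the same $\mathcal{A}_\alpha=\mathcal{B}$, and among those one extracts $n$ values $\alpha_1,\dots,\alpha_n$ for which $x_{\alpha_1},\dots,x_{\alpha_n}$ is a basis of $\mathrm{k}\langle A\rangle$. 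Then $\mathrm{k}\langle AB\rangle=\sum_i V_{\alpha_i}$ is stabilized by the single field $\mathcal{B}$, so $\mathcal{B}\subseteq H$ and the inequality follows. Your description of separability's role (``products collapsing in purely inseparable extensions'') is off: it is in fact not known whether the theorem fails without separability (see \cite{Hou}); separability is a hypothesis of the \emph{proof method}, needed for the pigeonhole, not a demonstrated hypothesis of the statement.
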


\noindent Here $H$ is an intermediate field containing $\mathrm{k}$ and the
separability hypothesis is crucial since the proof uses the fact that $K$
admits only a finite number of finite extensions of $\mathrm{k}$ (which is
also assumed in \cite{Kain}).\ Equivalently, this theorem asserts that the sum
of the dimensions of $\mathrm{k}\langle AB\rangle$ and its stabilizer must be
at least equal to the sum of dimensions of $\mathrm{k}\langle A\rangle$ and
$\mathrm{k}\langle B\rangle$. Remarkably, the authors showed that their
theorem implies Kneser's theorem for abelian groups by using Galois
correspondence. Observe it is not known if the theorem remains valid without
the separability hypothesis (see \cite{Hou}). Non commutative analogues of
this theorem were established in \cite{EL2} (linear version of Olson's theorem
without any separability hypothesis) and in \cite{Lec} (linear version of
Diderrich's theorem where only the elements of the set $A$ are assumed
pairwise commutative).

In \cite{Lec}, linear analogues (i.e. in division rings and fields) of
theorems by Pl\"{u}nnecke and Ruzsa \cite{Ruz} are given yielding upper bounds
for $\dim_{\mathrm{k}}(AB)$. In passing we observe these theorems can be
adapted to some unital associative algebras.\ It is then a natural question to
ask wether lower bound estimates for $\dim_{\mathrm{k}}(AB)$ similar to
(\ref{ineqLK}) exists for subspaces of a unital associative $\mathrm{k}%
$-algebra $\mathcal{A}$. A first obstruction is due to the existence of non
trivial annihilators of subsets.\ Indeed if the right annihilator
$\mathrm{ann}_{r}(A)$ of $A$ is not reduced to $\{0\}$, we can take for $B$
any generating subset of $\mathrm{ann}_{r}(A)$ and obtain $\dim_{\mathrm{k}%
}(AB)=0$.\ To overcome this problem, we will assume most of the time that the
$\mathrm{k}$-subspaces we consider in $\mathcal{A}$ contains at least one
invertible element.\ We thus have $\mathrm{ann}_{r}(\mathrm{k}\langle
A\rangle)=\mathrm{ann}_{l}(\mathrm{k}\langle A\rangle)=\{0\}$. We prove in
this paper that, quite surprisingly, this suffices to establish in
$\mathcal{A}$ an analogue of Diderrich's theorem but also analogues of
estimates by Hamidoune and Tao. To obtain lower bounds similar to
(\ref{ineqLK}), it nevertheless remains a second obstruction.\ We indeed need
an analogue of the separability hypothesis in our algebras context. In fact we
shall see that it suffices to assume that the subalgebra of $\mathcal{A}$
generated by $A$ has finitely many finite-dimensional subalgebras.\ This leads
us to classify the f.d. associative unital algebras with finitely many
subalgebras in Section \ref{Sec_Subalgebras}.

\bigskip

The Paper is organized as follows. In Section 2 we precise the algebra setting
we consider. Also to get a sufficient control on the invertible elements of
the algebra $\mathcal{A}$, we need to assume in the theorems we establish that
$\mathrm{k}$ is infinite and $\mathcal{A}$ satisfies one of the two following
(strong or weak) hypotheses:

\begin{description}
\item[H$_{\mathrm{s}}$] $\mathcal{A}$ is finite-dimensional over $\mathrm{k}$
or a Banach algebra or a finite product of (possibly infinite-dimensional)
field extensions over $\mathrm{k}$.

\item[H$_{\mathrm{w}}$] $\mathcal{A}$ is finite-dimensional over $\mathrm{k}$
\ or a subalgebra of a Banach algebra or a subalgebra of a finite product of
(possibly infinite-dimensional) field extensions over $\mathrm{k}$. Equivalently, an algebra verifying \textbf{H}$_{\mathrm{w}}$ is a subalgebra of an algebra verifying \textbf{H}$_{\mathrm{s}}$.
\end{description}

These hypotheses are not optimal and one can establish some refinements of our
results we will not detail for simplicity.\ The main result of Section 3 is
the classification of f.d.\negthinspace algebras with finitely many
subalgebras.\ Section 4 is devoted to the analogue of Diderrich's theorem. We
notably obtain a lower bound similar to (\ref{ineqLK}) where $\mathcal{H}$ is
the subalgebra of $\mathcal{A}$ which stabilizes $\mathrm{k}\langle AB\rangle
$.\ In Section 5, we establish analogues of results by Tao on spaces of small
doubling using a linear version of Hamidoune connectivity. Finally, in Section
6, we explain how the original theorems of Kneser and Diderrich in a group $G$
can be very easily recovered from our linear version in the group algebra of
$G$. In particular, the link with the group setting does not require to
realize $G$ as the Galois group of a finite extension of $\mathrm{k}$ as in
\cite{Xian} which would become problematic when $G$ is non abelian.\ We also
explain, in Section~7, how it is possible to state Hamidoune type results in
finite monoids by considering their monoid algebras.
%%% à enlever ???

\bigskip

While writing this paper, we were informed by G. Z\'{e}mor that a Kneser type
theorem has been very recently obtained in \cite{MZ} for the algebra
$\mathcal{A}=\mathrm{k}^{n}$ with applications to linear code theory.

\bigskip

\noindent\textbf{Acknowledgments.} Both authors are partly supported by the
"Agence Nationale de la Re\-cherche" grant ANR-12-JS01-0003-01 ACORT.

\section{The algebra setting}

\subsection{Vector span in an algebra}

Let $\mathcal{A}$ be a unital associative algebra over the field $\mathrm{k}$.
We denote by $\mathcal{A}_{\ast}=\mathcal{A}\setminus\{0\}$ and by
$U(\mathcal{A)}$ the group of invertible elements in $\mathcal{A}$. All along
this paper, by a subalgebra $\mathcal{B}$ of $\mathcal{A}$, we always mean a
unital subalgebra which contains $1$.

For any subset $A$ of $\mathcal{A}$, let $\mathrm{k}\langle A\rangle$ be the
$\mathrm{k}$-subspace of $\mathcal{A}$ generated by $A$. We write $\left\vert
A\right\vert $ for the cardinality of $A$, and $\dim_{\mathrm{k}}(A)$ for the
dimension of $\mathrm{k}\langle A\rangle$ over $\mathrm{k}$.\ When $\left\vert
A\right\vert $ is finite, $\dim_{\mathrm{k}}(A)$ is also finite and we have
$\dim_{\mathrm{k}}(A)\leq\left\vert A\right\vert $. We denote by
$\mathbb{A}(A)\subseteq\mathcal{A}$ the subalgebra generated by $A$ in
$\mathcal{A}$.

Given subsets $A$ and $B$ of $\mathcal{A}$, we thus have $\mathrm{k}\langle
A\cup B\rangle=\mathrm{k}\langle A\rangle+\mathrm{k}\langle B\rangle$, the sum
of the two spaces $\mathrm{k}\langle A\rangle$ and $\mathrm{k}\langle
B\rangle$. We have also $\mathrm{k}\langle A\cap B\rangle\subseteq
\mathrm{k}\langle A\rangle\cap\mathrm{k}\langle B\rangle$ and $\mathrm{k}%
\langle AB\rangle=\mathrm{k}\langle\mathrm{k}\langle A\rangle\mathrm{k}\langle
B\rangle\rangle$.\ We write as usual
\[
AB:=\{ab\mid a\in A,b\in B\}
\]
for the Minkowski product of the sets $A$ and $B$.\ Given a family of nonempty
subsets $A_{1},\ldots,A_{n}$ of $\mathcal{A}$, we define the Minkowski product
$A_{1}\cdots A_{n}$ similarly.\ 

Any finite-dimensional $\mathrm{k}$-subspace $V$ of $\mathcal{A}$ can be
realized as $V=\mathrm{k}\langle A\rangle$, where $A$ is any finite subset of
nonzero vectors spanning $V$. Also, when $V_{1}$ and $V_{2}$ are two
$\mathrm{k}$-vector spaces in $K$, $V_{1}V_{2}\subseteq\mathrm{k}\langle
V_{1}V_{2}\rangle$ but $V_{1}V_{2}$ is not a vector space in general. We set
$U(V):=V\cap U(\mathcal{A)}$ and $U(V)^{-1}=\{x^{-1}\mid x\in U(V)\}$. In what
follows we denote by $A,B$ subsets of $\mathcal{A}$ whereas $V,W$ refer to
$\mathrm{k}$-subspaces of $\mathcal{A}$.

We aim to give some estimates of $\dim_{\mathrm{k}}(AB)$ in terms of
$\dim_{\mathrm{k}}(A),$ $\dim_{\mathrm{k}}(B)$ and structure constants
depending on the algebra $\mathcal{A}$ (typically the dimensions of some
finite-dimensional subalgebras of $\mathcal{A}$). More generally we consider
similar problems for $\dim_{\mathrm{k}}(A_{1}\cdots A_{r})$ where
$A_{1},\ldots,A_{r}$ are finite subsets of $\mathcal{A}$. The following is
straightforward%
\[
\max(\dim_{\mathrm{k}}(A),\dim_{\mathrm{k}}(B))\leq\dim_{\mathrm{k}}%
(AB)\leq\dim_{\mathrm{k}}(A)\dim_{\mathrm{k}}(B)
\]
when $\mathrm{k}\langle A\rangle$ and $\mathrm{k}\langle B\rangle$ contain at
least an invertible element.\ In the sequel, we will restrict ourselves for
simplicity to the case where $\mathrm{k}$ is infinite. For $\mathrm{k}$ a
finite field, we can obtain estimates for $\dim_{\mathrm{k}}(AB)$ from the
infinite field case by considering the algebra $\mathcal{A}^{\prime
}=\mathcal{A}\otimes_{\mathrm{k}}\mathrm{k}(t)$ where $\mathrm{k}(t)$ is field
of rational functions in $t$ over $\mathrm{k}$. We indeed then have
\[
\dim_{\mathrm{k}}(AB)=\dim_{\mathrm{k}(t)}(A^{\prime}B^{\prime}),\quad
\dim_{\mathrm{k}}(A)=\dim_{\mathrm{k}(t)}(A^{\prime})\quad\text{ and }%
\quad\dim_{\mathrm{k}}(B)=\dim_{\mathrm{k}(t)}(B^{\prime})
\]
where $A^{\prime}=A\otimes_{\mathrm{k}}1\in\mathcal{A}\otimes\mathrm{k}(t)$
and $B^{\prime}=B\otimes_{\mathrm{k}}1\in\mathcal{A}\otimes\mathrm{k}%
(t)$.\ The following elementary lemma will be useful.

\begin{lemma}
\label{lem_stab}Let $\mathcal{A}$ be a finite-dimensional algebra over the
field $\mathrm{k}$ and $A$ be a finite subset of $\mathcal{A}$ such that
$A\cap U(\mathcal{A})\neq\emptyset$ and $\mathrm{k}\langle A^{2}%
\rangle=\mathrm{k}\langle A\rangle$.\ Then $\mathrm{k}\langle A\rangle$ is a
subalgebra $\mathcal{A}$ of and $U(\mathrm{k}\langle A\rangle)=U(\mathcal{A}%
)\cap\mathrm{k}\langle A\rangle$.\ 
\end{lemma}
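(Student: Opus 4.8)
The plan is to exploit finite-dimensionality to upgrade the hypothesis $\mathrm{k}\langle A^2\rangle = \mathrm{k}\langle A\rangle$ into a statement about all powers of $A$, and then locate the unit of the resulting algebra. Write $V = \mathrm{k}\langle A\rangle$. First I would observe that $\mathrm{k}\langle A^2\rangle = \mathrm{k}\langle V\cdot V\rangle = V^2$ (the span of all products $vv'$ with $v,v'\in V$), so the hypothesis reads $V^2 = V$. Multiplying by $V$ on the right and using $V^2=V$ repeatedly gives $V^n = V$ for all $n\ge 1$; hence the (possibly non-unital) algebra $\mathbb{A}_0$ generated by $A$ without insisting on $1$ is exactly $\bigcup_n V^n = V$, i.e. $V$ is closed under multiplication.

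The next step is to show $1\in V$, which is where the invertibility hypothesis enters and which I expect to be the main obstacle. Pick $u\in A\cap U(\mathcal{A})$. Since $V$ is a subspace closed under multiplication and contains $u$, left multiplication by $u$ is a $\mathrm{k}$-linear map $L_u\colon V\to V$; as $\mathcal A$ is finite-dimensional, so is $V$, and $L_u$ is injective on $V$ (because $u$ is invertible in $\mathcal A$), hence bijective on $V$. Therefore there is $e\in V$ with $ue = u$, and then $u(e - 1) = 0$ in $\mathcal A$; invertibility of $u$ forces $e = 1$, so $1\in V$. Combined with the previous paragraph, $V$ is a unital subalgebra of $\mathcal A$, i.e. $V = \mathbb{A}(A)$.

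Finally I would settle the claim $U(V) = U(\mathcal A)\cap V$. The inclusion $U(V)\subseteq U(\mathcal A)\cap V$ is immediate since a two-sided inverse inside $V$ is a fortiori one inside $\mathcal A$. Conversely, suppose $x\in V$ is invertible in $\mathcal A$, with inverse $x^{-1}\in\mathcal A$. The map $L_x\colon V\to V$, $v\mapsto xv$, is again a $\mathrm{k}$-linear endomorphism of the finite-dimensional space $V$, and it is injective (if $xv=0$ then $v = x^{-1}(xv) = 0$), hence surjective; since $1\in V$, there is $y\in V$ with $xy = 1$. A symmetric argument with right multiplication $R_x$ produces $y'\in V$ with $y'x = 1$, and the usual cancellation $y = y'xy = y'$ shows $y = y' = x^{-1}\in V$, so $x\in U(V)$. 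This completes the proof; the only genuinely delicate point is the passage $V^2 = V \Rightarrow 1\in V$, for which finite-dimensionality (so that injective linear self-maps are surjective) is exactly what is needed and explains the hypothesis in the lemma.
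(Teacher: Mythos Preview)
Your proof is correct and follows essentially the same route as the paper: observe that $V=\mathrm{k}\langle A\rangle$ is closed under multiplication, use that left multiplication by an invertible element is an injective (hence bijective) linear endomorphism of the finite-dimensional space $V$ to produce $1\in V$, and then repeat the argument to show inverses of elements of $U(\mathcal{A})\cap V$ lie in $V$. The only cosmetic differences are that your detour through $V^n=V$ is unnecessary (closure under multiplication is immediate from $V^2=V$), and you are slightly more explicit than the paper about the two-sided inverse in the last step.
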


\begin{proof}
Observe that $\mathrm{k}\langle A^{2}\rangle=\mathrm{k}\langle A\rangle$ means
that $\mathrm{k}\langle A\rangle$ is closed under multiplication. Then, for
any nonzero $a\in\mathrm{k}\langle A\rangle$, the map $\varphi_{a}%
:\mathrm{k}\langle A\rangle\rightarrow\mathrm{k}\langle A\rangle$ which sends
$\alpha\in\mathrm{k}\langle A\rangle$ on $\varphi_{a}(\alpha)=a\alpha$ is a
$\mathrm{k}$-linear endomorphism of the space $\mathrm{k}\langle A\rangle
$.\ If we choose $a\in\mathrm{k}\langle A\rangle\cap U(\mathcal{A})$,
$\varphi_{a}$ is a $\mathrm{k}$-linear injective endomorphism of the
finite-dimensional space $\mathrm{k}\langle A\rangle$.\ Hence it is an
automorphism.\ There then exists $\alpha\in\mathrm{k}\langle A\rangle$ such
that $a\alpha=a$.\ Since $a\in U(\mathcal{A})$, this shows that $\alpha
=1\in\mathrm{k}\langle A\rangle$ and $\mathrm{k}\langle A\rangle$ is a unital
subalgebra of $\mathcal{A}$.\ Now since $1\in\mathrm{k}\langle A\rangle$,
there exists $\beta\in\mathrm{k}\langle A\rangle$ such that $a\beta=1$. So
$a^{-1}=\beta\in\mathrm{k}\langle A\rangle$ and $U(\mathrm{k}\langle
A\rangle)=U(\mathcal{A})\cap\mathrm{k}\langle A\rangle$.\ 
\end{proof}

\bigskip

For any subset $A$ in $\mathcal{A}_{\ast}$, we set%
\[
\mathcal{H}_{l}(A):=\{h\in\mathcal{A}\mid h\,\mathrm{k}\langle A\rangle
\subseteq\mathrm{k}\langle A\rangle\}\quad\text{ and }\quad\mathcal{H}%
_{r}(A):=\{h\in\mathcal{A}\mid\mathrm{k}\langle A\rangle\,h\subseteq
\mathrm{k}\langle A\rangle\}
\]
for the left and right stabilizers of $\mathrm{k}\langle A\rangle$ in
$\mathcal{A}$. Clearly $\mathcal{H}_{l}(A)$ and $\mathcal{H}_{r}(A)$ are
subalgebras.\ In particular, when $\mathcal{A}$ is commutative, $\mathcal{H}%
_{l}(A)=\mathcal{H}_{r}(A)$ is a commutative $\mathrm{k}$-algebra that we
simply write $\mathcal{H}(A)$.\ If $\mathcal{H}_{l}(A)$ (resp. $\mathcal{H}%
_{r}(A)$) is not equal to $\mathrm{k}$, we say that $\mathrm{k}\langle
A\rangle$ is \emph{left periodic} (resp. \emph{right periodic}). Observe also
that for $A$ and $B$ two finite subsets of $\mathcal{A}$, if $\mathrm{k}%
\langle A\rangle$ is left periodic (resp. $\mathrm{k}\langle B\rangle$ is
right periodic), then $\mathrm{k}\langle AB\rangle$ is left periodic (resp.
right periodic). Indeed, for $\mathrm{k}\langle A\rangle$ left periodic, we
have $\mathcal{H}_{l}(A)\neq\mathrm{k}$ and $\mathcal{H}_{l}(A)\mathrm{k}%
\langle A\rangle\subseteq\mathrm{k}\langle A\rangle$.\ By linearity of the
multiplication in $\mathcal{A}$, this gives $\mathcal{H}_{l}(A)\mathrm{k}%
\langle AB\rangle\subseteq\mathrm{k}\langle AB\rangle$ thus $\mathcal{H}%
_{l}(A)\subseteq\mathcal{H}_{l}(AB)$ and $\mathcal{H}_{l}(AB)\neq\mathrm{k}%
$.\ The case $\mathrm{k}\langle B\rangle$ right periodic is similar.

\begin{remark}
The stabilizer algebra $\mathcal{H}_{l}(A)$ (resp. $\mathcal{H}_{r}(A)$) may
also be described as the biggest subalgebra of $\mathcal{A}$ such that
$\mathrm{k}\langle A\rangle$ is a left (resp. right) representation for this subalgebra.

Assume $A$ is a finite subset of $\mathcal{A}_{\ast}$ such that $A\cap
U(\mathcal{A})\neq0$. Then $\mathcal{H}_{l}(A)$ and $\mathcal{H}_{r}(A)$ are
finite-dimensional $k$-subalgebras of $\mathcal{A}$.\ Indeed, for any $a\in
A\cap U(\mathcal{A})$, we have $\mathcal{H}_{l}(A)a\subset\mathrm{k}\langle
A\rangle$ and $a\mathcal{H}_{r}(A)\subset\mathrm{k}\langle A\rangle$ with
\[
\dim_{\mathrm{k}}(\mathcal{H}_{l}(A)a)=\dim_{\mathrm{k}}(\mathcal{H}%
_{l}(A))\quad\text{and} \quad\dim_{\mathrm{k}}(a\mathcal{H}_{r}(A))=\dim
_{\mathrm{k}}(\mathcal{H}_{r}(A)) .
\]

\end{remark}

For any subset $A$ in $\mathcal{A}_{\ast}$, we define%
\[
\mathrm{ann}_{l}(A):=\{a\in\mathcal{A}\mid a\,\mathrm{k}\langle A\rangle
=\{0\}\}\quad\text{and} \quad\mathrm{ann}_{r}(A):=\{a\in\mathcal{A}%
\mid\mathrm{k}\langle A\rangle\,a=\{0\}\}
\]
for the left and right annihilator of $\mathrm{k}\langle A\rangle$ in
$\mathcal{A}$. Observe $\mathrm{ann}_{l}(A)$ and $\mathrm{ann}_{r}(A)$ are not
subalgebras of $\mathcal{A}$ since they do not contain $1$ but respectively a
left ideal and a right ideal of $\mathcal{A}$. Moreover $\mathrm{ann}_{l}(A)$
(resp. $\mathrm{ann}_{r}(A)$) is a two-sided ideal of $\mathcal{H}_{l}(A)$
(resp. $\mathcal{H}_{r}(A)$) and $\mathrm{k}\oplus\mathrm{ann}_{l}(A)$ and
$\mathrm{k}\oplus\mathrm{ann}_{r}(A)$ are respectively subalgebras of
$\mathcal{H}_{l}(A)$ and $\mathcal{H}_{r}(A)$.

%$\mathrm{k}\oplus$ $\mathrm{ann}_{l}(A)$ and $\mathrm{k}\oplus$ $\mathrm{ann}_{r}(A)$ are
%by Lemma \ref{lem_stab}.\ Also since $(\mathrm{k}\oplus$ $\mathrm{ann}%
%_{l}(A))\mathrm{k}\langle A\rangle=\mathrm{k}\langle A\rangle$ and $\mathrm{k}\langle A\rangle
%(\mathrm{k}\oplus$ $\mathrm{ann}_{r}(A))=\mathrm{k}\langle A\rangle$, we have
%\[
%\mathrm{k}\oplus\mathrm{ann}_{l}(A)\subset\mathcal{H}_{l}(A)\text{ and
%}\mathrm{k}\oplus\mathrm{ann}_{r}(A)\subset\mathcal{H}_{r}(A).
%\]
When $\mathcal{A}$ is commutative, we write $\mathrm{ann}_{l}(A)=\mathrm{ann}%
_{r}(A)=\mathrm{ann}(A)$. Also when $A=\{x_{1},\ldots,x_{r}\}$ is finite, we
have
\[
\mathrm{ann}_{l}(A)=\bigcap\limits_{i=1}^{r}\mathrm{ann}_{l}(x_{i}).
\]

%\begin{remark}
%In what follows we will assume most of the time that $A$ contains an
%invertible element.\ Indeed we then get $\mathrm{ann}_{l}(A)=\mathrm{ann}%
%_{r}(A)=\{0\}$ i.e. the left and right annihilator of $\mathrm{k}\langle
%A\rangle$ are trivial. Observe that for a finite subset $B$ such that
%$\mathrm{k}\langle B\rangle\cap U(\mathcal{A})=\emptyset$ and $Z=\mathrm{ann}%
%_{l}(B)\neq\{0\}$, we have $\mathrm{k}\langle ZB\rangle=\{0\}$ and it seems
%difficult to obtain a relevant upper or lower bound for $\dim_{\mathrm{k}}%
%\mathrm{k}\langle ZB\rangle$ in terms of $\dim_{\mathrm{k}}(B),$
%$\dim_{\mathrm{k}}(Z)$ and structure constants depending on the algebras
%$\mathcal{A}$.
%\end{remark}

\subsection{Basis of invertible elements}

Let $\mathcal{A}$ be an algebra over the field $\mathrm{k}$. The algebra
$\mathcal{A}$ has no non trivial finite-dimensional subalgebra when for any
$a$ in $\mathcal{A} \setminus k,$ the algebra morphism%
\[
\theta_{a}:\left\{
\begin{array}
[c]{c}%
\mathrm{k}[T]\rightarrow\mathcal{A}\\
P\longmapsto P(a)
\end{array}
\right.
\]
is injective.\ This means that $\mathrm{k}[a]=\operatorname{Im}\theta_{a}$ is
isomorphic to $\mathrm{k}[T]$.\ 

When $\mathcal{A}$ is finite-dimensional, for any element $a\in\mathcal{A}$,
the $\mathrm{k}$-subalgebra $\mathrm{k}[a]$ generated by $a$ is isomorphic to
$\mathrm{k}[T]/(\mu_{a})$ where $\mu_{a}$ is the minimal polynomial of $a$ and
$\ker\theta_{a}=(\mu_{a})$. In particular, $\mathrm{k}[a]$ is a field if and
only if $\mu_{a}$ is irreducible over $\mathrm{k}$.\ 

\begin{lemma}
\label{Lem_finite}Assume $\mathcal{A}$ is finite-dimensional and consider
$a\in A$ such that $a\notin U(\mathcal{A})$. Then, there exists $P\in
\mathrm{k}[T]$ such that $aP(a)=0$ and $P(a) \neq0$.
\end{lemma}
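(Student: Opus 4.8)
The plan is to exploit the structure of $\mathrm{k}[a]$ as a quotient $\mathrm{k}[T]/(\mu_a)$, where $\mu_a$ is the minimal polynomial of $a$, using the hypothesis that $a$ is not invertible in $\mathcal{A}$. First I would observe that since $\mathcal{A}$ is finite-dimensional, $\mu_a$ exists and has positive degree; write $\mu_a(T) = T^m + c_{m-1}T^{m-1} + \cdots + c_1 T + c_0$. The key dichotomy is whether $c_0 = 0$ or $c_0 \neq 0$. If $c_0 \neq 0$, then from $\mu_a(a) = 0$ one can solve for $1$ as a polynomial in $a$ times $a$, namely $a \cdot \big(-c_0^{-1}(a^{m-1} + c_{m-1}a^{m-2} + \cdots + c_1)\big) = 1$, which exhibits $a$ as invertible in $\mathcal{A}$ (with inverse in $\mathrm{k}[a]$), contradicting $a \notin U(\mathcal{A})$. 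Hence $c_0 = 0$.

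Given $c_0 = 0$, I would factor $\mu_a(T) = T \cdot Q(T)$ where $Q(T) = T^{m-1} + c_{m-1}T^{m-2} + \cdots + c_1 \in \mathrm{k}[T]$ has degree $m-1$. Then $a\,Q(a) = \mu_a(a) = 0$. It remains to check $Q(a) \neq 0$: this is immediate because $Q$ has degree $m-1 < m = \deg \mu_a$, and $\mu_a$ is by definition the monic polynomial of least degree annihilating $a$, so no nonzero polynomial of degree strictly less than $m$ can kill $a$; since $Q$ is monic (hence nonzero), $Q(a) \neq 0$. Taking $P = Q$ completes the argument.

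I do not expect any real obstacle here — the statement is essentially the observation that a non-invertible element of a finite-dimensional algebra is a zero divisor, realized concretely through its minimal polynomial. The only point requiring a little care is the case analysis on the constant term $c_0$ and making sure the inverse produced when $c_0 \neq 0$ lies in $\mathcal{A}$ (it does, being a polynomial in $a$), so that it genuinely contradicts $a \notin U(\mathcal{A})$. One should also note the degenerate possibility $m = 1$: then $\mu_a(T) = T - c_0$; if $c_0 \neq 0$ then $a = c_0 \in \mathrm{k}^\times \subseteq U(\mathcal{A})$, again a contradiction, so $c_0 = 0$, forcing $a = 0$, and one may take $P = 1$, since $a \cdot 1 = 0$ and $1 \neq 0$. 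Thus the conclusion holds in all cases.
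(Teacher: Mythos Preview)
Your proof is correct and follows essentially the same approach as the paper: both factor the minimal polynomial as $\mu_a(T)=T\cdot P(T)$ and use minimality of $\mu_a$ to conclude $P(a)\neq 0$. The only difference is that you spell out explicitly why $T\mid \mu_a$ (via the constant-term argument) and treat the degenerate case $\deg\mu_a=1$, whereas the paper simply asserts that $a\notin U(\mathcal{A})$ forces $T\mid\mu_a$.
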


\begin{proof}
Since $a\notin U(\mathcal{A})$, the minimal polynomial $\mu_{a}$ is divisible
by $T$. Let us write $\mu_{a}=TP(T)$ with $P(T) \in k[T]$. We have $P(a)
\neq0$ since $\deg P < \deg\mu_{a}$ and $aP(a)=\mu_{a}(a)=0$.
\end{proof}

\bigskip

Recall the algebras we shall consider are unital and associative over the
infinite field $\mathrm{k}$. In addition we will restrict ourself most of the
time to finite-dimensional algebras, Banach algebras over $\mathrm{k}%
=\mathbb{R}$ or $\mathrm{k}=\mathbb{C}$ or finite product of field extensions
over $\mathrm{k}$.\ In the case of Banach algebras, we will write $\left\Vert
\cdot\right\Vert $ for the ambient norm.

%\begin{lemma}
%\label{Lemma_Banach}Assume $\mathcal{A}$ is a Banach algebra and consider $a$
%in $\mathcal{A}$ such that $\left\Vert 1-a\right\Vert <1$. Then $a\in
%U(\mathcal{A})$.
%\end{lemma}

\begin{lemma}
\label{Lemma_Banach}Assume $\mathcal{A}$ is a Banach algebra and consider $a$
in $\mathcal{A}$ such that $\left\Vert a\right\Vert <1$. Then $1-a\in
U(\mathcal{A})$.
\end{lemma}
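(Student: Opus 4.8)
The plan is to use the Neumann series $\sum_{n\ge 0}a^{n}$, which is the standard device for inverting $1-a$ in a Banach algebra. First I would introduce the partial sums $s_{N}=\sum_{n=0}^{N}a^{n}$ (with the convention $a^{0}=1$, legitimate since $\mathcal{A}$ is unital). Using submultiplicativity of the norm, $\lVert a^{n}\rVert\le\lVert a\rVert^{n}$, and since $\lVert a\rVert<1$ the numerical series $\sum_{n\ge 0}\lVert a\rVert^{n}$ converges; hence for $M>N$ one has $\lVert s_{M}-s_{N}\rVert\le\sum_{n=N+1}^{M}\lVert a\rVert^{n}$, which shows $(s_{N})_{N}$ is a Cauchy sequence. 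By completeness of $\mathcal{A}$ it converges to some $b\in\mathcal{A}$.

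Next I would compute, by a telescoping argument, $(1-a)s_{N}=1-a^{N+1}$ and likewise $s_{N}(1-a)=1-a^{N+1}$. Since $\lVert a^{N+1}\rVert\le\lVert a\rVert^{N+1}\to 0$, we have $a^{N+1}\to 0$, so $1-a^{N+1}\to 1$. On the other hand, multiplication in a Banach algebra is jointly continuous (again from $\lVert xy\rVert\le\lVert x\rVert\,\lVert y\rVert$), so $(1-a)s_{N}\to(1-a)b$ and $s_{N}(1-a)\to b(1-a)$. Passing to the limit in both identities gives $(1-a)b=b(1-a)=1$, so $b$ is a two-sided inverse of $1-a$ and $1-a\in U(\mathcal{A})$.

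There is no real obstacle here: the only points requiring care are that $\mathcal{A}$ is unital (so that $a^{0}=1$ makes sense and $1$ is the identity we invert to), that $\mathcal{A}$ is complete (so the Cauchy sequence $s_{N}$ has a limit), and that the norm is submultiplicative (used both for convergence of the series and for continuity of multiplication). All three are part of the standing definition of a Banach algebra, so the proof is essentially the bookkeeping above.
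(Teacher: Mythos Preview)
Your proof is correct and follows exactly the same approach as the paper: the paper simply asserts that $(1-a)^{-1}=\sum_{k=0}^{+\infty}a^{k}$ by completeness of $\mathcal{A}$ and the condition $\lVert a\rVert<1$, while you have spelled out the Cauchy-sequence and continuity-of-multiplication details underlying that assertion.
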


\begin{proof}
Since $\left\Vert a\right\Vert <1$ and $\mathcal{A}$ is a complete space, we
have
\[
(1-a)^{-1}=\sum_{k=0}^{+\infty}a^{k}%
\]
\end{proof}

%\begin{proof}
%Set $u=1-a$. Since $\left\Vert u\right\Vert <1$ and $\mathcal{A}$ is a
%complete space, we have
%\[
%(1-u)^{-1}=\sum_{k=0}^{+\infty}u^{k}%
%\]
%so $a^{-1}=\sum_{k=0}^{+\infty}(1-a)^{k}.$
%\end{proof}

\bigskip

\begin{lemma}
Assume the algebra $\mathcal{A}$ satisfies \textbf{H}$_{\mathrm{s}}$ and
consider $a\in\mathcal{A}$. There exist infinitely many $\lambda\in\mathrm{k}$
\ such that the elements of the form $a-\lambda1$ belong to $U(\mathcal{A})$.
\end{lemma}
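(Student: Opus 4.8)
The plan is to split into the three alternatives allowed by $\mathbf{H}_{\mathrm{s}}$ and, in each, to show that the set of \emph{bad} scalars --- those $\lambda \in \mathrm{k}$ with $a - \lambda 1 \notin U(\mathcal{A})$ --- is finite (or, in the Banach case, a bounded subset of $\mathbb{R}$ or $\mathbb{C}$); since $\mathrm{k}$ is infinite this at once yields infinitely many good $\lambda$.

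If $\mathcal{A}$ is finite-dimensional, I would work inside the commutative subalgebra $\mathrm{k}[a] \cong \mathrm{k}[T]/(\mu_a)$ recalled before Lemma~\ref{Lem_finite}. For $\lambda \in \mathrm{k}$ the element $a - \lambda 1$ is invertible in $\mathrm{k}[a]$, hence in $\mathcal{A}$, as soon as $\mu_a(\lambda) \neq 0$: then $T - \lambda$ is coprime to $\mu_a$, and evaluating a B\'ezout relation $u(T)(T - \lambda) + v(T)\mu_a(T) = 1$ at $a$ gives $u(a)(a - \lambda 1) = 1$. Conversely, if $\mu_a(\lambda) = 0$ then $\mu_a = (T - \lambda)r(T)$ and $(a - \lambda 1)r(a) = \mu_a(a) = 0$ with $r(a) \neq 0$ for degree reasons, so $a - \lambda 1$ is a zero divisor. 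As $\mu_a$ has at most $\deg \mu_a$ roots in $\mathrm{k}$, all but finitely many $\lambda$ are good. (One may instead mimic Lemma~\ref{lem_stab}: in a finite-dimensional algebra $x$ is invertible iff left multiplication $L_x$ is bijective, and $L_{a - \lambda 1} = L_a - \lambda\,\mathrm{id}$ fails this only at the finitely many eigenvalues of $L_a$.)

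If $\mathcal{A}$ is a Banach algebra, so $\mathrm{k} = \mathbb{R}$ or $\mathbb{C}$, I would take any $\lambda$ with $\lvert \lambda \rvert > \Vert a \Vert$: then $\Vert \lambda^{-1}a \Vert < 1$, so $1 - \lambda^{-1}a \in U(\mathcal{A})$ by Lemma~\ref{Lemma_Banach}, whence $a - \lambda 1 = -\lambda(1 - \lambda^{-1}a) \in U(\mathcal{A})$, and there are infinitely many such $\lambda$ in $\mathrm{k}$. Finally, if $\mathcal{A} = K_1 \times \cdots \times K_n$ is a finite product of field extensions of $\mathrm{k}$, write $a = (a_1, \ldots, a_n)$ with $a_i \in K_i$; an element of a product of fields is invertible iff each coordinate is nonzero, so $a - \lambda 1$ is non-invertible only when $a_i = \lambda$ (hence $a_i \in \mathrm{k}$) for some $i$, i.e. for $\lambda$ in the finite set $\{a_1, \ldots, a_n\} \cap \mathrm{k}$.

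I do not expect a genuine obstacle: the statement is elementary once the three cases are separated, and in each of them non-invertibility of $a - \lambda 1$ is pinned down by a polynomial equation (finite-dimensional case), a norm inequality (Banach case), or finitely many coordinate equations (product of fields), none of which can hold for every $\lambda$ in an infinite field. The only points to keep in mind are that $\mathrm{k}$ is assumed infinite throughout and that ``Banach algebra'' here means over $\mathrm{k} = \mathbb{R}$ or $\mathbb{C}$ (which are infinite). Note also that in all three cases the argument produces the inverse of $a - \lambda 1$ inside $\mathcal{A}$ itself.
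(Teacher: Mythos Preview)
Your proof is correct and follows essentially the same three-case split as the paper, with the Banach and product-of-fields cases handled identically. In the finite-dimensional case your primary argument (the bad $\lambda$ are exactly the roots of $\mu_a$) is a slight variant of the paper's, which instead invokes Lemma~\ref{Lem_finite} on $a-\lambda 1$ to produce an eigenvector of left multiplication by $a$; your parenthetical alternative is precisely that argument, so the two proofs coincide.
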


\begin{proof}
Assume first that $\mathcal{A}$ is finite-dimensional.\ Let $\lambda$ such
that $a-\lambda1$ is not invertible. By Lemma~\ref{Lem_finite}, there exists
$P\in\mathrm{k}[T]$ such that $(a-\lambda1)P(a-\lambda1)=0$ and $P(a-\lambda1)
\neq0$.\ Set $Q(T)=P(T-\lambda)$. We then get, $(a-\lambda1)Q(a)=0$ and
$Q(a)\neq0$.\ This can be rewritten $aQ(a)=\lambda Q(a)$ with $Q(a)\neq
0$.\ Therefore $Q(a)$ is an eigenvector associated to the eigenvalue $\lambda$
for the linear map $\varphi_{a}:\mathcal{A\rightarrow A}$ defined by
$\varphi_{a}(x)=ax$ for any $x\in\mathcal{A}$.\ Since $\mathcal{A}$ is
finite-dimensional, the linear map $\varphi$ can only admit a finite number of
eigenvalues and we are done.

%Now assume $\mathcal{A}$ is a Banach algebra.\ We have for any $\lambda\neq0$
%the equality $\left\Vert 1-(-\lambda^{-1}a+1)\right\Vert =\left\vert
%\lambda^{-1}\right\vert \left\Vert a\right\Vert $. Therefore we get
%$\left\Vert 1-(-\lambda^{-1}a+1)\right\Vert <1$ for any $\lambda$ such that
%$\left\vert \lambda\right\vert >\left\Vert a\right\Vert $.\ By Lemma
%\ref{Lemma_Banach}, we know that $-\lambda^{-1}a+1$ belongs to $U(\mathcal{A}%
%)$. This proves that $a-\lambda1=-\lambda(-\lambda^{-1}a+1)$ belongs to
%$U(\mathcal{A})$ for any $\lambda$ such that $\left\vert \lambda\right\vert
%>\left\Vert a\right\Vert $ and we are done.

Now assume $\mathcal{A}$ is a Banach algebra.\ For any $\left\vert
\lambda\right\vert >\left\Vert a\right\Vert $, Lemma \ref{Lemma_Banach} shows
that $1-\lambda^{-1}a\in U(A)$. Hence $-\lambda(1-\lambda^{-1}a)=a-\lambda1\in
U(A)$.

Let us now consider the third case: assume that $\mathcal{A}=K_{1}
\times\cdots\times K_{m}$ is a product of field extensions over $\mathrm{k}$.
Let $a=(a_{1},\ldots, a_{m}) \in\mathcal{A}$ and choose $\lambda\in\mathrm{k}$
distinct from $a_{1},\ldots, a_{m}$.
\end{proof}

\begin{proposition}
\label{Prop_base_inv} Assume $\mathcal{A}$ satisfies \textbf{H}$_{\mathrm{s}}$
and let $A$ be a finite subset in $\mathcal{A}$ such that $A\cap
U(\mathcal{A})\neq\emptyset$. Then the $\mathrm{k}$-subspace $\mathrm{k}%
\langle A\rangle$ admits a basis of invertible elements.
\end{proposition}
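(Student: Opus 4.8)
The plan is to start from one invertible element $a_0 \in A \cap U(\mathcal{A})$ and to show that each element of a fixed basis of $\mathrm{k}\langle A\rangle$ can be perturbed by a suitable multiple of $a_0$ (or of another already-constructed invertible basis vector) so as to become invertible, without leaving $\mathrm{k}\langle A\rangle$ and without destroying linear independence. Concretely, pick a basis $v_1,\ldots,v_d$ of $\mathrm{k}\langle A\rangle$ with $v_1 = a_0 \in U(\mathcal{A})$. I would then argue inductively: suppose $v_1,\ldots,v_{i-1}$ have already been replaced by invertible vectors $w_1,\ldots,w_{i-1}$ (with $w_1 = a_0$) so that $w_1,\ldots,w_{i-1},v_i,\ldots,v_d$ is still a basis of $\mathrm{k}\langle A\rangle$. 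Consider the element $w_1^{-1}v_i$; since $\mathcal{A}$ satisfies \textbf{H}$_{\mathrm{s}}$, the previous lemma provides infinitely many $\lambda \in \mathrm{k}$ with $w_1^{-1}v_i - \lambda 1 \in U(\mathcal{A})$, hence $v_i - \lambda w_1 \in U(\mathcal{A})$ for infinitely many $\lambda$. Set $w_i := v_i - \lambda w_1$ for such a $\lambda$ (any choice works since we need only one). Then $w_i \in \mathrm{k}\langle A\rangle$ because $w_1, v_i \in \mathrm{k}\langle A\rangle$, and replacing $v_i$ by $w_i$ is a unimodular change of basis (it adds a multiple of $w_1$ to $v_i$), so $w_1,\ldots,w_i,v_{i+1},\ldots,v_d$ is again a basis. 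After $d$ steps we obtain a basis $w_1,\ldots,w_d$ of $\mathrm{k}\langle A\rangle$ consisting of invertible elements.

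There is one subtlety worth addressing: the lemma that guarantees infinitely many invertible elements of the form $x - \lambda 1$ is stated for $x \in \mathcal{A}$ without restriction, so it applies directly to $x = w_1^{-1}v_i \in \mathcal{A}$; there is no need to worry about whether $w_1^{-1}v_i$ itself is invertible. Note also that multiplying on the left by the invertible element $w_1$ preserves invertibility, which is why passing from $w_1^{-1}v_i - \lambda 1$ to $v_i - \lambda w_1 = w_1(w_1^{-1}v_i - \lambda 1)$ is legitimate. The only place where \textbf{H}$_{\mathrm{s}}$ is used is in invoking that lemma; the rest of the argument is pure linear algebra over $\mathrm{k}$ inside the finite-dimensional space $\mathrm{k}\langle A\rangle$, and the infiniteness of $\mathrm{k}$ is what makes "infinitely many $\lambda$" meaningful (though in fact even one suitable $\lambda$ per step suffices here).

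The main obstacle, such as it is, is bookkeeping: one must be careful that the perturbation at step $i$ uses a vector ($w_1$, say) that is genuinely invertible and already lies in the span, and that the resulting family remains a basis. Using $w_1 = a_0$ for every correction, rather than the most recently constructed $w_{i-1}$, keeps this transparent since $a_0$ is invertible by hypothesis from the outset and the correction $v_i \mapsto v_i - \lambda a_0$ is manifestly an elementary (determinant-one) operation on the coordinates. Hence the induction closes and the proposition follows.
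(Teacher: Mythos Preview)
Your proof is correct and follows essentially the same approach as the paper. The only cosmetic difference is that the paper first normalizes by replacing $A$ with $a^{-1}A$ so as to assume $1\in\mathrm{k}\langle A\rangle$, and then perturbs each remaining basis vector $b_i$ by a scalar multiple of $1$; you instead keep the original space and perturb each $v_i$ by a scalar multiple of $a_0$, using $v_i-\lambda a_0=a_0(a_0^{-1}v_i-\lambda 1)$ to reduce to the same lemma. These are the same argument up to a left translation by $a_0$, and your inductive bookkeeping (which, as you note, amounts to $d-1$ independent elementary operations using the fixed vector $w_1=a_0$) is equivalent to the paper's non-inductive presentation.
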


\begin{proof}
Let $a\in A\cap U(\mathcal{A})$. By replacing $A$ by $a^{-1}A$, we can assume
that $1\in A$.\ The $\mathrm{k}$-subspace $\mathrm{k}\langle A\rangle$ admits
a basis containing $1$ of the form $B=\{1,b_{2},\ldots b_{d}\}$ with
$d=\dim_{\mathrm{k}}A$.\ By using the previous lemma, there exits $\lambda_{i}
\in\mathrm{k}$ such that each $b_{i}-\lambda_{i}1$ is invertible.\ Then
$B^{\prime}=\{1,b_{2}-\lambda_{1} 1,\ldots,b_{d}-\lambda_{d} 1\}$ is a
$\mathrm{k}$-basis of $\mathrm{k}\langle A\rangle$ containing only invertible
elements.
%In fact, we can refine the proof to choose a common $\lambda$.
\end{proof}

%%%% je remonte ce lemme dans la partie sur les bases d'éléments inversibles
%%%% car il me semble bien correspondre à l'objectif de cette partie (plus qu'à l'objectif de la partie
%%%% "Kneser thm"): on est dans la problématique trouver une bonne base d'inversible.

\begin{lemma}
\label{Lemma_VDM}Assume $\mathcal{A}$ satisfies \textbf{H}$_{\mathrm{s}}$. Let
$V$ be a $n$-dimensional subspace of $\mathcal{A}$ such that $V\cap
U(\mathcal{A})\neq\emptyset$.\ Consider $\{x_{1},\ldots,x_{n}\}$ a basis of
$V$ over $\mathrm{k}$ with $x_{1}$ invertible. Then

\begin{enumerate}
\item Any $n$ vectors in the set
\[
X=\{x_{1}+\alpha x_{2}+\cdots+\alpha^{n-1}x_{n}\mid\alpha\in\mathrm{k}\}
\]
form a basis of $V$ over $\mathrm{k}$.

\item The set $X$ contains an infinite number of invertible elements, that is
an infinite number of elements of the form $x_{1}+\alpha x_{2}+\cdots
+\alpha^{n-1}x_{n},\alpha\in\mathrm{k}$ are invertible.

\item The set $X$ contains a basis of $V$ over $\mathrm{k}$ of invertible elements.
\end{enumerate}
\end{lemma}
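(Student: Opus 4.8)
The plan is to prove (1) first, deduce (2) from it using the earlier lemma on existence of invertibles of the form $a-\lambda 1$, and then get (3) as an immediate consequence of (1) and (2). For (1), I would take any $n$ distinct scalars $\alpha_1,\ldots,\alpha_n\in\mathrm{k}$ (possible since $\mathrm{k}$ is infinite) and form the vectors $v_j=x_1+\alpha_j x_2+\cdots+\alpha_j^{n-1}x_n$. Expressing each $v_j$ in the basis $\{x_1,\ldots,x_n\}$, the matrix of coordinates is the Vandermonde matrix $(\alpha_j^{i-1})_{1\le i,j\le n}$, whose determinant $\prod_{i<j}(\alpha_j-\alpha_i)$ is nonzero. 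Hence $v_1,\ldots,v_n$ are linearly independent, so they form a basis of the $n$-dimensional space $V$. This is the routine linear-algebra core and presents no obstacle.

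For (2), consider the map $f:\mathrm{k}\to V$, $f(\alpha)=x_1+\alpha x_2+\cdots+\alpha^{n-1}x_n$. I would argue by contradiction: suppose only finitely many elements of $X=f(\mathrm{k})$ are invertible. Since $\mathrm{k}$ is infinite, this means $f(\alpha)\notin U(\mathcal{A})$ for all $\alpha$ outside a finite set $S$. Here is where I invoke the hypothesis $x_1\in U(\mathcal{A})$: for $\alpha\ne 0$ we can write $f(\alpha)=\alpha^{n-1}\bigl(\alpha^{-(n-1)}x_1+\cdots+x_n\bigr)$, but more usefully I would instead reduce to a one-parameter family and apply the lemma that for any fixed element $a\in\mathcal{A}$ there are infinitely many $\lambda\in\mathrm{k}$ with $a-\lambda 1\in U(\mathcal{A})$. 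Concretely, write $f(\alpha)=x_1+\alpha\, g(\alpha)$ where $g(\alpha)=x_2+\alpha x_3+\cdots+\alpha^{n-2}x_n$; this doesn't immediately have the shape $a-\lambda 1$. The cleaner route in each of the three cases of $\textbf{H}_{\mathrm{s}}$: in the finite-dimensional case, invertibility of $f(\alpha)$ is equivalent to $\det\varphi_{f(\alpha)}\ne 0$ where $\varphi_{f(\alpha)}$ is left multiplication; since the entries of the matrix of $\varphi_{f(\alpha)}$ are polynomials in $\alpha$, this determinant is a polynomial $D(\alpha)\in\mathrm{k}[\alpha]$, and $D$ is not identically zero because $f(0)=x_1\in U(\mathcal{A})$ gives $D(0)\ne 0$; a nonzero polynomial over an infinite field has finitely many roots, so $f(\alpha)\in U(\mathcal{A})$ for all but finitely many $\alpha$, in particular for infinitely many. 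In the Banach case, $\alpha\mapsto f(\alpha)$ is continuous, $f(0)=x_1$ is invertible, $U(\mathcal{A})$ is open, so $f(\alpha)$ is invertible for all $\alpha$ in a neighbourhood of $0$, hence for infinitely many $\alpha$. In the case $\mathcal{A}=K_1\times\cdots\times K_m$ a product of field extensions, $f(\alpha)$ is invertible iff each of its components is nonzero in $K_i$; the $i$-th component is a polynomial in $\alpha$ with coefficients in $K_i$, nonzero since its value at $0$ is the $i$-th component of $x_1\ne 0$, so it vanishes for only finitely many $\alpha\in\mathrm{k}\subseteq K_i$, and taking the union over $i$ still excludes only finitely many $\alpha$.

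For (3), combine (1) and (2): by (2) the set $X$ contains infinitely many invertible elements, so we may pick $n$ distinct scalars $\alpha_1,\ldots,\alpha_n$ with all $f(\alpha_j)$ invertible; by (1) these $n$ vectors form a basis of $V$, and they lie in $X$ and are invertible, which is exactly the assertion. The main obstacle is the case analysis in (2) — making the "invertibility fails only on a finite/small set" argument uniform across the finite-dimensional, Banach, and product-of-fields settings — but in each case it reduces to the same principle (a nonzero polynomial, or a nonempty open set, cannot be avoided by an infinite field's worth of parameter values), with the hypothesis $x_1\in U(\mathcal{A})$ precisely ensuring the relevant polynomial/open condition is nontrivial at $\alpha=0$.
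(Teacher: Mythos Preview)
Your proposal is correct and follows essentially the same route as the paper: Vandermonde for (1), a three-case analysis for (2) (polynomial determinant in the finite-dimensional case, small-$\alpha$ argument in the Banach case, componentwise nonzero polynomials in the product-of-fields case), and the combination of (1) and (2) for (3). The only cosmetic difference is in the Banach case, where you invoke continuity of $\alpha\mapsto f(\alpha)$ and openness of $U(\mathcal{A})$, while the paper spells out the explicit norm estimate $\|1-x_1^{-1}f(\alpha)\|\leq|\alpha|\sum_{i\geq 2}\|x_1^{-1}x_i\|$ and applies Lemma~\ref{Lemma_Banach} directly; these are the same argument at different levels of abstraction.
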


\begin{proof}
Assertion 1 is an application of the Vandermonde determinant.

For assertion 2, assume first $\mathcal{A}$ is finite-dimensional.\ For any
$\alpha\in\mathrm{k}$, set $x_{\alpha}=x_{1}+\alpha x_{2}+\cdots+\alpha
^{n-1}x_{n}$.\ Let $\varphi_{\alpha}:\mathcal{A}\rightarrow\mathcal{A}$ be the
left multiplication by $x_{\alpha}$ in $\mathcal{A}$.\ Clearly $x_{\alpha}$ is
invertible if and only if the linear map $\varphi_{\alpha}$ is an isomorphism
of $\mathrm{k}$-spaces. Write $P(\alpha)=\det\varphi_{\alpha}$ for the
determinant of the linear map $\varphi_{\alpha}$. Then $P(\alpha)$ is a non
zero polynomial in $\alpha$ since $P(0)\neq0$. So $P(\alpha)=0$ only for a
finite number of $\alpha\in\mathrm{k}$ and we are done.

Now assume that $\mathcal{A}$ is a Banach algebra. Observe that $x_{a}$ is
invertible if and only if $y_{\alpha}=x_{1}^{-1}x_{\alpha}$ is.\ We have
$y_{\alpha}=1+\alpha x_{1}^{-1}x_{2}+\cdots+\alpha^{n-1}x_{1}^{-1}x_{n}$.
Assume $\left\vert \alpha\right\vert \leq1$. We get
\[
\left\Vert 1-y_{\alpha}\right\Vert =\left\Vert \alpha x_{1}^{-1}x_{2}%
+\cdots+\alpha^{n-1}x_{1}^{-1}x_{n}\right\Vert \leq\left\vert \alpha
\right\vert (\left\Vert x_{1}^{-1}x_{2}\right\Vert +\cdots+\left\Vert
x_{1}^{-1}x_{n}\right\Vert ).
\]
Therefore $\left\Vert 1-y_{\alpha}\right\Vert <1$ for any $\alpha$ such that
$\left\vert \alpha\right\vert <(\left\Vert x_{1}^{-1}x_{2}\right\Vert
+\cdots+\left\Vert x_{1}^{-1}x_{n}\right\Vert )^{-1}$. By using Lemma
\ref{Lemma_Banach} we obtain our assertion 2 for Banach algebras since there
is infinitely many such $\alpha$ in $\mathbb{R}$ and $\mathbb{C}$.

Let us consider the third case: $\mathcal{A}=K_{1}\times\cdots\times K_{m}$ is
a product of field extensions over $\mathrm{k}$. We write $x_{i}%
=(a_{i1},\ldots,a_{im})$ for $i\in\{1,\ldots,n\}$. Then $x_{1}+\alpha
x_{2}+\cdots+\alpha^{n-1}x_{n}$ is invertible if and only if for every
$j\in\{1,\ldots,m\},\ P_{j}(\alpha):=a_{1j}+\alpha a_{2j}+\cdots+\alpha
^{n-1}a_{nj}\neq0$. But $P_{j}\in K_{j}[X]$ is a non zero polynomial (since
$a_{1j}\neq0$) and hence has only a finite number of roots in the
(commutative) field $K_{j}$ and thus also in $\mathrm{k}$.
%remarque ici on utilise le fait que les K_j sont supposés commutatifs...

Assertion 3 is a consequence of Assertions 1 and 2.
\end{proof}

\section{Algebras with finitely many subalgebras}

\label{Sec_Subalgebras}We resume the notation and the hypotheses of the
previous section on the algebra $\mathcal{A}$. The goal of this section is to
classify the finite-dimensional algebras with finitely many subalgebras. Such
algebras will indeed appear in the Kneser type theorem we shall state in
Section \ref{Sec_Kneser}.

\subsection{Primitive element}

The first step of our classification is to show that a finite-dimensional
algebras with finitely many subalgebras is generated by one element.

%\begin{proposition}
%Assume $\mathcal{A}$ admits a finite number of subalgebras. Then $\mathcal{A}$
%admits a primitive element i.e. there exists $a\in\mathcal{A}$ such that
%$\mathcal{A}=\mathrm{k}[a]$. In particular, $\mathcal{A}$ is commutative.
%\end{proposition}

%\begin{proof}
%Let $\mathcal{B}_{1},\ldots,\mathcal{B}_{r}$ be the list of all proper
%subalgebras of $\mathcal{A}$ (i.e. distinct from $\mathcal{A}$).\ Since $%
%%TCIMACRO{\tbigcup \limits_{i=1}^{r}}%
%%BeginExpansion
%{\textstyle\bigcup\limits_{i=1}^{r}}
%%EndExpansion
%\mathcal{B}_{i}\neq\mathcal{A}_{n}$ (recall that $\mathrm{k}$ is infinite),
%there exists $a\in\mathcal{A}$ such that $a\notin$ $%
%%TCIMACRO{\tbigcup \limits_{i=1}^{r}}%
%%BeginExpansion
%{\textstyle\bigcup\limits_{i=1}^{r}}
%%EndExpansion
%\mathcal{B}_{i}$. Then $\mathcal{A}^{\prime}=\mathrm{k}[a]$ is a subalgebra of
%$\mathcal{A}$ distinct from any $\mathcal{B}_{i},i=1,\ldots,r$.\ Thus
%$\mathcal{A}^{\prime}=\mathcal{A}.$
%\end{proof}

\begin{lemma}
[Union of subspaces]\label{lem-union-sbspace} Let $V$ be a finite-dimensional
vector space over $\mathrm{k}$ and $V_{1},\ldots,V_{n}$ proper subspaces of
$V$. Then
\[
\bigcup_{i=1}^{n}V_{i}\varsubsetneq V
\]

\end{lemma}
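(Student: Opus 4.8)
The plan is to prove this by induction on the number $n$ of proper subspaces, but the cleanest argument uses the fact that $\mathrm{k}$ is infinite. The statement for $n=1$ is just the hypothesis that $V_1$ is proper. For the inductive step I would assume the result holds for any collection of $n-1$ proper subspaces and argue by contradiction: suppose $V = \bigcup_{i=1}^n V_i$. By the inductive hypothesis we may pick a vector $v \in V \setminus \bigcup_{i=1}^{n-1} V_i$; necessarily $v \in V_n$. Also, since $V_n$ is proper, pick $w \in V \setminus V_n$.

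Now consider the infinite family of vectors $w + \lambda v$ for $\lambda \in \mathrm{k}$. None of these lies in $V_n$: if $w + \lambda v \in V_n$, then since $v \in V_n$ we would get $w \in V_n$, a contradiction. Hence each $w + \lambda v$ lies in one of $V_1, \ldots, V_{n-1}$. Since $\mathrm{k}$ is infinite and there are only finitely many indices, by pigeonhole there exist two distinct scalars $\lambda \neq \mu$ and an index $i \le n-1$ with $w + \lambda v \in V_i$ and $w + \mu v \in V_i$. Subtracting, $(\lambda - \mu) v \in V_i$, so $v \in V_i$, contradicting the choice of $v$. This contradiction shows $\bigcup_{i=1}^n V_i \varsubsetneq V$.

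The only subtlety — and the one place the hypotheses genuinely matter — is that $\mathrm{k}$ must be infinite, which is part of the standing assumptions of this section (and in any case the statement is false over finite fields, e.g. $\mathbb{F}_2^2$ is the union of its three one-dimensional subspaces). Finite-dimensionality of $V$ is not actually needed for this argument, but it is harmless to assume it. I do not expect any real obstacle here; the pigeonhole step on the infinite index set $\mathrm{k}$ is the crux, and it is entirely routine.
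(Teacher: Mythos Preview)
Your proof is correct. The paper, however, argues differently: it enlarges each $V_i$ to a hyperplane $H_i=\ker\varphi_i$ for some nonzero linear form $\varphi_i\in V^{\ast}$, and then observes that since $\mathrm{k}$ is infinite the ring of polynomial functions on $V$ is an integral domain, so $f=\prod_{i=1}^n\varphi_i$ is a nonzero function; any $v$ with $f(v)\neq 0$ lies outside $\bigcup_i V_i$. Your inductive pigeonhole argument along the affine line $w+\lambda v$ is equally valid and arguably more self-contained (it does not invoke the identification of polynomials with polynomial functions). The paper's approach is a one-shot algebraic argument that avoids induction and makes the role of the infinite base field visible in a single step. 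Both proofs use the infiniteness of $\mathrm{k}$ in an essential way, and your remark that finite-dimensionality is not actually needed is correct for both arguments.
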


\begin{proof}
Since $V_{i}$ is a proper subspace of $V$, it can be embedded in a hyperplane
$H_{i}$ of $V$ which is the kernel of a linear form $\varphi_{i}\in V^{\ast
}\setminus\{0\}$. Since the ring of polynomial functions on $V$ is an integral
domain ($\mathrm{k}$ is infinite), then
\[
f=\prod_{i=1}^{n}\varphi_{i}%
\]
is a non zero function. Any vector $v\in V$ such that $f(v)\neq0$ is not in
the union of the $V_{i}$.
\end{proof}

\begin{corollary}
\label{cor-monogene} Let $\mathcal{A}$ be a finite-dimensional algebra over
$\mathrm{k}$ such that $\mathcal{A}$ has only a finite number of subalgebras.
Then there exists $x\in\mathcal{A}$ such that $\mathcal{A}=\mathrm{k}[x]$. In
particular, $\mathcal{A}$ is commutative and generated by only one element.
\end{corollary}

\begin{proof}
Lemma~\ref{lem-union-sbspace} ensures us that there exists $x\in\mathcal{A}$
which is not in any proper subalgebra of $\mathcal{A}$. We then get
$\mathcal{A}=\mathrm{k}[x]$.
\end{proof}

\subsection{Structure of finite-dimensional algebras with finitely many
subalgebras}

The rest of this section is devoted to the study of algebras with only a
finite number of subalgebras. Our aim is to prove a classification theorem for
this kind of algebras (Theorem~\ref{TH_classification}). The proof is divided
in two steps. In the first step, we reduce through various easy lemmas the
form for algebras with finite number of subalgebras. The second step shows
that algebras of the form obtained in the first step has indeed a finite
number of subalgebras.

Let us start our first step. Corollary~\ref{cor-monogene} says that we can
restrict our attention to algebras of the form $\mathrm{k}[T]/(P)$. We begin
with an easy remark which will be very useful.

\begin{remark}
\textbf{(Quotient -- Subalgebra)}\label{rem-quotient} If $\mathcal{A}$ is a
finite-dimensional algebra over $\mathrm{k}$ such that $\mathcal{A}$ has only
a finite number of subalgebras. Then every subalgebra or quotient
$\mathcal{B}$ of $\mathcal{A}$ verifies the same property. This is obvious for
subalgebras. For the quotient case, the subalgebras of $\mathcal{B}$ are in
bijection with the subalgebras of $\mathcal{A}$ containing the kernel of the
surjective map from $\mathcal{A}$ to~$\mathcal{B}$.
\end{remark}

Let us now construct finite-dimensional algebras with an infinite number of subalgebras.

\begin{lemma}
\label{lem-kxxn} For $n\geq4$, $\mathrm{k}[T]/T^{n}$ has an infinite number of subalgebras.
\end{lemma}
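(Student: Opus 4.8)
The statement is that $\mathrm{k}[T]/(T^n)$ has infinitely many (unital) subalgebras for $n \geq 4$. Write $R = \mathrm{k}[T]/(T^n)$ and let $t$ denote the image of $T$, so $t^n = 0$ and $\{1, t, \dots, t^{n-1}\}$ is a $\mathrm{k}$-basis. Since $\mathrm{k}$ is infinite, it suffices to exhibit a one-parameter family of pairwise distinct subalgebras. The natural candidate: for $\lambda \in \mathrm{k}$, consider the element $u_\lambda = t^2 + \lambda t^3$ (this requires $n \geq 4$ so that $t^3 \neq 0$) and set $\mathcal{B}_\lambda = \mathrm{k}[u_\lambda] = \mathrm{k}\langle 1, u_\lambda, u_\lambda^2, \dots \rangle$.

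First I would compute $\mathcal{B}_\lambda$ explicitly. We have $u_\lambda^2 = t^4 + 2\lambda t^5 + \lambda^2 t^6$, and more generally $u_\lambda^j \in t^{2j} + t^{2j+1}\mathrm{k}[t]$, so all powers $u_\lambda^j$ for $j \geq 2$ lie in $t^4 \mathrm{k}[t]$. Thus $\mathcal{B}_\lambda = \mathrm{k} \oplus \mathrm{k} u_\lambda \oplus (\text{span of higher powers})$, and crucially $\mathcal{B}_\lambda \cap (t^2\mathrm{k}[t] / t^2\mathrm{k}[t] \cap \dots)$ — more precisely, modulo $t^4$, the subalgebra $\mathcal{B}_\lambda$ contains exactly $\mathrm{k} \oplus \mathrm{k}(t^2 + \lambda t^3)$ in degrees $2$ and $3$. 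So to distinguish the $\mathcal{B}_\lambda$ I would look at the image of $\mathcal{B}_\lambda$ in $R / t^4 R = \mathrm{k}[T]/(T^4)$: it is the $2$-dimensional subalgebra $\mathrm{k} \oplus \mathrm{k}(t^2 + \lambda t^3)$ (note $(t^2+\lambda t^3)^2 = 0$ in $R/t^4R$). Two such are equal iff $t^2 + \lambda t^3$ and $t^2 + \mu t^3$ are proportional modulo the line already spanned, which forces $\lambda = \mu$. Hence $\lambda \mapsto \mathcal{B}_\lambda$ is injective, giving infinitely many subalgebras.

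The main thing to be careful about — and the only place where $n \geq 4$ really enters — is ensuring $t^3 \neq 0$ in $R$ (so that $u_\lambda$ genuinely depends on $\lambda$) and that the degree-$2$, degree-$3$ part of $\mathcal{B}_\lambda$ is not accidentally larger than expected: I must check that $\mathcal{B}_\lambda$ contains no element of the form $at^2 + bt^3$ with $(a:b) \neq (1:\lambda)$. This follows because any element of $\mathcal{B}_\lambda$ is $c_0 \cdot 1 + c_1 u_\lambda + (\text{terms in } t^4\mathrm{k}[t])$, so its $t^2$- and $t^3$-coefficients are $c_1$ and $c_1 \lambda$ respectively, always proportional with ratio $\lambda$ when $c_1 \neq 0$. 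For $n = 4$ the argument is cleanest ($R = R/t^4R$ itself); for $n > 4$ one passes to the quotient $R/t^4 R$, which is legitimate since by Remark~\ref{rem-quotient} a quotient of an algebra with finitely many subalgebras again has finitely many — equivalently, distinct subalgebras of the quotient pull back to distinct subalgebras of $R$. I expect no serious obstacle; the only real work is the bookkeeping of the low-degree coefficients, which the quotient trick makes painless.
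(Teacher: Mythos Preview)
Your proposal is correct and follows essentially the same approach as the paper: both use the one-parameter family $\mathcal{B}_\lambda=\mathrm{k}[t^2+\lambda t^3]$, invoke Remark~\ref{rem-quotient} to reduce to (or pass through) the quotient $\mathrm{k}[T]/T^4$, and distinguish the $\mathcal{B}_\lambda$ by noting that in the quotient each is the $2$-dimensional algebra $\mathrm{k}\oplus\mathrm{k}(t^2+\lambda t^3)$, with $t^2+\mu t^3\notin\mathcal{B}_\lambda$ when $\lambda\neq\mu$. The only organizational difference is that the paper reduces to $n=4$ at the outset while you carry general $n$ and quotient at the end; the content is the same.
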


\begin{proof}
Due to the remark~\ref{rem-quotient}, it suffices to study the case $n=4$. In
this case, for $\lambda\neq\mu\in\mathrm{k}$, the subalgebras $\mathcal{A}%
_{\lambda}=\mathrm{k}[T^{2}+\lambda T^{3}]$ and $\mathcal{A}_{\mu}%
=\mathrm{k}[T^{2}+\mu T^{3}]$ are distinct subalgebras. Indeed, they are two
dimensional algebras isomorphic to $\mathrm{k}[T]/T^{2}$. So $(1,T^{2}+\lambda
T^{3})$ is a basis for $\mathcal{A}_{\lambda}$ and $T^{2}+\mu T^{3}$ can not
be written as a linear combination of $1$ and $T^{2}+\lambda T^{3}$ in
$\mathrm{k}[T]/T^{4}$. For if evaluating at $T=0$, $T^{2}+\lambda T^{3}$ and
$T^{2}+\mu T^{3}$ would be colinear.
\end{proof}

\begin{lemma}
For $n\geq4$ and $P\in\mathrm{k}[T]$ a non constant polynomial, $\mathrm{k}%
[T]/P^{n}$ has an infinite number of subalgebras.
\end{lemma}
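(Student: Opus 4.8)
The plan is to reduce the general case to the already-proved case $P(T)=T$ (Lemma~\ref{lem-kxxn}) by a change of variables and a quotient argument, using Remark~\ref{rem-quotient}. First I would pick a root $\alpha$ of $P$ in an algebraic closure $\bar{\mathrm{k}}$; after the linear substitution $T\mapsto T+\alpha$ we may assume $P(0)=0$, i.e. $T\mid P$. Then $T^n \mid P^n$, so $(P^n)\subseteq (T^n)$ as ideals of $\mathrm{k}[T]$, which gives a surjection $\mathrm{k}[T]/(P^n)\twoheadrightarrow \mathrm{k}[T]/(T^n)$. Wait—this requires $\alpha\in\mathrm{k}$, which need not hold, so the honest statement is that $\mathrm{k}[T]/(T^n)$ is a quotient of $\mathrm{k}[T]/(Q^n)$ where $Q$ is the minimal polynomial (over $\mathrm{k}$) of some chosen root $\alpha$ of $P$, after the substitution; since $Q\mid P$ we still have $\mathrm{k}[T]/(P^n)\twoheadrightarrow \mathrm{k}[T]/(Q^n)$, so it suffices to treat $P$ irreducible.

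So assume $P$ irreducible, hence $\mathrm{k}[T]/(P)\cong L$ is a finite field extension of $\mathrm{k}$; write $\mathcal{A}=\mathrm{k}[T]/(P^n)$. The key observation is that $\mathcal{A}$ contains the subalgebra $\mathrm{k}[P]$, and the image of $P$ in $\mathcal{A}$ is nilpotent of index exactly $n$ (since $P^{n-1}\notin(P^n)$), so $\mathrm{k}[P]\cong \mathrm{k}[U]/(U^n)$ inside $\mathcal{A}$. By Remark~\ref{rem-quotient} (subalgebra case), if $\mathcal{A}$ had finitely many subalgebras then so would $\mathrm{k}[U]/(U^n)$; but $n\geq 4$, so Lemma~\ref{lem-kxxn} says $\mathrm{k}[U]/(U^n)$ has infinitely many subalgebras, a contradiction. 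Alternatively, and perhaps more cleanly, one can quotient: the ideal generated by the image of $P$ in $\mathcal{A}$ is $(P)/(P^n)$, and the quotient of $\mathcal{A}$ by it is $\mathrm{k}[T]/(P)\cong L$, which doesn't directly help; the genuinely useful quotient is the one onto $\mathrm{k}[T]/(T^n)$ in the split case, so I would present the argument via the \emph{subalgebra} $\mathrm{k}[P]\subseteq\mathcal{A}$ rather than a quotient, to avoid needing $\alpha\in\mathrm{k}$ at all.

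The only real subtlety is verifying that $\mathrm{k}[P]$, as a subalgebra of $\mathrm{k}[T]/(P^n)$, is genuinely isomorphic to $\mathrm{k}[U]/(U^n)$: one checks $1,P,P^2,\dots,P^{n-1}$ are $\mathrm{k}$-linearly independent modulo $P^n$ (immediate by degree considerations, since $\deg P\geq 1$ forces these to have strictly increasing degrees $0,\deg P,\dots,(n-1)\deg P$, all $<n\deg P$), and that $P^n=0$ there; this identifies $\mathrm{k}[P]$ with $\mathrm{k}[U]/(U^n)$. I expect no serious obstacle here — this is the routine part. In summary, the steps are: (1) observe $\mathrm{k}[P]$ is a subalgebra of $\mathrm{k}[T]/(P^n)$; (2) show $\mathrm{k}[P]\cong\mathrm{k}[U]/(U^n)$ via the degree argument above; (3) invoke Lemma~\ref{lem-kxxn} for $n\geq 4$; (4) conclude by Remark~\ref{rem-quotient} that $\mathrm{k}[T]/(P^n)$ has infinitely many subalgebras.
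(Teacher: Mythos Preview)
Your proposal is correct and, after the exploratory detour through roots and quotients, lands on exactly the paper's argument: the subalgebra $\mathrm{k}[P]\subseteq\mathrm{k}[T]/(P^n)$ is isomorphic to $\mathrm{k}[U]/(U^n)$, and one concludes via Lemma~\ref{lem-kxxn} and the subalgebra case of Remark~\ref{rem-quotient}. The paper's proof is a single sentence stating precisely this; your degree argument for the linear independence of $1,P,\dots,P^{n-1}$ modulo $P^n$ is the natural justification the paper leaves implicit.
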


\begin{proof}
Indeed, the subalgebra generated by $P$ is isomorphic to $\mathrm{k}[T]/T^{n}%
$. So~Remark~\ref{rem-quotient} and Lemma~\ref{lem-kxxn} give the result.
\end{proof}

\begin{lemma}
\label{lem-deg2} For $n=2,3$ and $P\in\mathrm{k}[T]$ with $\deg P\geq2$ then
$\mathrm{k}[T]/P^{n}$ has an infinite number of subalgebras.
\end{lemma}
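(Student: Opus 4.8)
The plan is to reduce the case $n=3$ to the case $n=2$, and then to exhibit inside $\mathrm{k}[T]/(P^2)$ an infinite family of two-dimensional subalgebras all isomorphic to $\mathrm{k}[T]/T^2$, exactly in the spirit of Lemma~\ref{lem-kxxn}.

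First, for $n=3$: the natural surjection $\mathrm{k}[T]/(P^3)\twoheadrightarrow\mathrm{k}[T]/(P^2)$ (with kernel $(P^2)/(P^3)$) is a quotient map of algebras, so by Remark~\ref{rem-quotient} it suffices to prove that $\mathrm{k}[T]/(P^2)$ has infinitely many subalgebras. Thus we may assume $n=2$; write $\mathcal{A}=\mathrm{k}[T]/(P^2)$ and let $u,v$ be the images of $P$ and $TP$ in $\mathcal{A}$. Since $P^2\equiv TP^2\equiv T^2P^2\equiv 0$ in $\mathcal{A}$, we get $u^2=uv=v^2=0$. Moreover $\{1,u,v\}$ is $\mathrm{k}$-linearly independent in $\mathcal{A}$: a relation $a+bu+cv=0$ means $P^2\mid a+(b+cT)P$; reducing modulo $P$ forces $P\mid a$, hence $a=0$, and then $P\mid b+cT$, which forces $b+cT=0$ because $\deg(b+cT)\le 1<2\le\deg P$. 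This is the only point where the hypothesis $\deg P\ge 2$ is used.

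For each $\lambda\in\mathrm{k}$ set $w_{\lambda}=u+\lambda v$ and $\mathcal{A}_{\lambda}=\mathrm{k}\,1+\mathrm{k}\,w_{\lambda}$. Since $w_{\lambda}^{2}=u^{2}+2\lambda uv+\lambda^{2}v^{2}=0$, the subset $\mathcal{A}_{\lambda}$ is closed under multiplication and contains $1$, hence is a subalgebra of $\mathcal{A}$; as $\{1,w_{\lambda}\}$ is independent (because $\{1,u,v\}$ is), it is two-dimensional and in fact isomorphic to $\mathrm{k}[T]/T^{2}$ via $T\mapsto w_{\lambda}$. If $\mathcal{A}_{\lambda}=\mathcal{A}_{\mu}$, then $w_{\mu}\in\mathcal{A}_{\lambda}$, so $w_{\mu}=a+bw_{\lambda}$ for some $a,b\in\mathrm{k}$, i.e. $a+(b-1)u+(b\lambda-\mu)v=0$; the linear independence of $\{1,u,v\}$ then gives $a=0$, $b=1$ and $\lambda=\mu$. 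Hence the subalgebras $\mathcal{A}_{\lambda}$, $\lambda\in\mathrm{k}$, are pairwise distinct, and since $\mathrm{k}$ is infinite, $\mathcal{A}$ has infinitely many subalgebras.

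The only step requiring any care is the linear independence of $\{1,u,v\}$ (equivalently, that the $\mathcal{A}_{\lambda}$ are genuinely distinct); everything else is a one-line computation. It is worth noting that the argument genuinely breaks down for $\deg P=1$, which is consistent with the hypothesis: there $\mathrm{k}[T]/P^{2}\cong\mathrm{k}[T]/T^{2}$ has only the two subalgebras $\mathrm{k}$ and $\mathrm{k}[T]/T^{2}$.
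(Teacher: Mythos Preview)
Your proof is correct and takes essentially the same approach as the paper: your element $w_{\lambda}=u+\lambda v=(1+\lambda T)P$ is exactly the element the paper calls $Q_{\lambda}$, and both arguments exhibit the same family of two-dimensional subalgebras $\mathcal{A}_{\lambda}\cong\mathrm{k}[T]/T^{2}$. Your presentation is slightly more explicit in first isolating the linear independence of $\{1,P,TP\}$ before building the subalgebras, whereas the paper argues distinctness of the $\mathcal{A}_{\lambda}$ directly, but the content is the same.
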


\begin{proof}
Thanks to remark~\ref{rem-quotient}, it suffices to consider the case $n=2$.
For $\lambda\in\mathrm{k}$, let us consider $Q_{\lambda}=(1+\lambda
T)P\in\mathrm{k}[T]/P^{2}$. The subalgebra $\mathcal{A}_{\lambda}$ of
$\mathrm{k}[T]/P^{2}$ generated by $Q_{\lambda}$ is isomorphic to
$\mathrm{k}[T]/T^{2}$ and so is two dimensional. For $\lambda\neq\mu$, we have
$\mathcal{A}_{\lambda}\neq\mathcal{A}_{\mu}$. Indeed, if $(1+\mu
T)P=\alpha+\beta(1+\lambda T)P$ in $\mathrm{k}[T]/P^{2}$. Then, going into
$\mathrm{k}[T]/P$, we get $\alpha=0$ and so $P\mid(1+\mu T)-\beta(1+\lambda
T)$. Since $\deg P\geq2$, we obtain $(1+\mu T)=\beta(1+\lambda T)$, which is absurd.
\end{proof}

\begin{lemma}
\label{lem-nilpotent} For $n,m\in\{2,3\}$, $\mathrm{k}[T]/T^{n}\times
\mathrm{k}[T]/T^{m}$ has an infinite number of subalgebras.
\end{lemma}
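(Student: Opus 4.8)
The plan is to exhibit an explicit infinite family of pairwise-distinct subalgebras of $\mathcal{A}=\mathrm{k}[T]/T^{n}\times\mathrm{k}[T]/T^{m}$, in the spirit of Lemma~\ref{lem-kxxn} and Lemma~\ref{lem-deg2}. Thanks to Remark~\ref{rem-quotient} it suffices to treat the smallest case $n=m=2$, since $\mathrm{k}[T]/T^{2}$ is a quotient of $\mathrm{k}[T]/T^{3}$ and hence $\mathrm{k}[T]/T^{2}\times\mathrm{k}[T]/T^{2}$ is a quotient of $\mathrm{k}[T]/T^{n}\times\mathrm{k}[T]/T^{m}$; a quotient with infinitely many subalgebras forces the original algebra to have infinitely many as well. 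So write $\mathcal{A}=\mathrm{k}[u]\times\mathrm{k}[v]$ with $u^{2}=v^{2}=0$, a $4$-dimensional algebra with basis $\{(1,0),(0,1),(u,0),(0,v)\}$ (equivalently basis $\{1=(1,1),(1,0),(u,0),(0,v)\}$).

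Next I would, for each $\lambda\in\mathrm{k}$, consider the element $z_{\lambda}=(u,\lambda v)\in\mathcal{A}$ and the subalgebra $\mathcal{A}_{\lambda}=\mathrm{k}[z_{\lambda}]$ it generates. Since $z_{\lambda}^{2}=(u^{2},\lambda^{2}v^{2})=(0,0)$, each $\mathcal{A}_{\lambda}$ is isomorphic to $\mathrm{k}[T]/T^{2}$ and is the $2$-dimensional space $\mathrm{k}1\oplus\mathrm{k}z_{\lambda}$. The remaining point is to check that $\lambda\neq\mu$ implies $\mathcal{A}_{\lambda}\neq\mathcal{A}_{\mu}$: if they coincided, then $z_{\mu}=\alpha 1+\beta z_{\lambda}$ for some $\alpha,\beta\in\mathrm{k}$; comparing the $(1,1)$-components (i.e.\ working modulo the radical $\mathrm{k}(u,0)\oplus\mathrm{k}(0,v)$, which is the nilradical of $\mathcal{A}$) gives $\alpha=0$, and then $(u,\mu v)=\beta(u,\lambda v)$ forces $\beta=1$ (from the first coordinate) and $\mu=\lambda$ (from the second), a contradiction. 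Hence $\{\mathcal{A}_{\lambda}\mid\lambda\in\mathrm{k}\}$ is an infinite family of distinct subalgebras, using that $\mathrm{k}$ is infinite.

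I do not expect a serious obstacle here: the only mild subtlety is making sure the reduction to $n=m=2$ is phrased correctly (one needs $\mathrm{k}[T]/T^{2}\times\mathrm{k}[T]/T^{2}$ to be a \emph{quotient}, not merely a subalgebra, of $\mathrm{k}[T]/T^{n}\times\mathrm{k}[T]/T^{m}$, so that Remark~\ref{rem-quotient} applies), and keeping track of which coordinate kills which generator when verifying $\mathcal{A}_{\lambda}\neq\mathcal{A}_{\mu}$. An alternative, if one prefers to avoid the reduction, is to run the same argument directly in $\mathrm{k}[T]/T^{n}\times\mathrm{k}[T]/T^{m}$ with $z_{\lambda}=(\bar T,\lambda\bar T)$ and use the projection onto the two residue fields $\mathrm{k}\times\mathrm{k}$ to separate the $\mathcal{A}_{\lambda}$; the computation is essentially identical.
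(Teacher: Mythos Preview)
Your proposal is correct and follows essentially the same approach as the paper: reduce to $n=m=2$ via Remark~\ref{rem-quotient}, then exhibit the family $\mathcal{A}_{\lambda}=\mathrm{k}[(T,\lambda T)]$ of two-dimensional subalgebras and separate them by reducing modulo the nilradical. Your write-up is in fact a touch more explicit than the paper's in extracting $\beta=1$ before concluding $\lambda=\mu$.
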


\begin{proof}
Thanks to remark~\ref{rem-quotient}, it suffices to consider the case $m=n=2$.
For $\lambda\in\mathrm{k}$, the element $(T,\lambda T)$ generates an algebra
isomorphic to $\mathrm{k}[T]/T^{2}$ denoted by $\mathcal{A}_{\lambda}$. For
$\lambda\neq\mu$, we have $\mathcal{A}_{\lambda}\neq\mathcal{A}_{\mu}$.
Indeed, if $(T,\mu T)=\alpha(1,1)+\beta(T,\lambda T)$. Then, mapping $T$ to
$0$, we get $\alpha=0$ and $(T,\mu T)$ and $(T,\lambda T)$ are colinear which
is not the case.
\end{proof}

\begin{corollary}
\label{cor-structure} Let $\mathcal{A}$ be a finite-dimensional algebra over
$\mathrm{k}$ such that $\mathcal{A}$ has only a finite number of subalgebras.
Then, there exist finite algebraic extensions of $\mathrm{k}$, $L_{1}%
,\ldots,L_{n}$ generated over $\mathrm{k}$ by one element and two integers
$\delta\in\{0,1\}$ and $m\in\{2,3\}$ such that.
\[
\mathcal{A}\overset{\mathrm{{\text{ {\tiny $k$-alg.}}}}}{\simeq}L_{1}%
\times\cdots\times L_{n}\times(\mathrm{k}[T]/T^{m})^{\delta}%
\]

\end{corollary}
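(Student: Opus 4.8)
The plan is to combine Corollary~\ref{cor-monogene} with the Chinese Remainder Theorem and then eliminate the forbidden shapes one by one using the obstruction lemmas proved above.

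By Corollary~\ref{cor-monogene} we may assume $\mathcal{A}=\mathrm{k}[T]/(P)$ for some non constant $P\in\mathrm{k}[T]$. Write $P=P_{1}^{e_{1}}\cdots P_{s}^{e_{s}}$ with the $P_{i}$ pairwise distinct monic irreducible polynomials and $e_{i}\geq 1$. The Chinese Remainder Theorem gives a $\mathrm{k}$-algebra isomorphism
\[
\mathcal{A}\simeq\prod_{i=1}^{s}\mathrm{k}[T]/(P_{i}^{e_{i}}).
\]
For any subset $J\subseteq\{1,\ldots,s\}$, the projection $\mathcal{A}\to\prod_{i\in J}\mathrm{k}[T]/(P_{i}^{e_{i}})$ is a surjective $\mathrm{k}$-algebra morphism (it maps $1$ to $1$), so by Remark~\ref{rem-quotient} every such partial product has only finitely many subalgebras.

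Next I would extract three constraints. \emph{(i) Every $e_{i}\leq 3$.} If $e_{i}\geq 4$, the class of $P_{i}$ in $\mathrm{k}[T]/(P_{i}^{e_{i}})$ generates a subalgebra isomorphic to $\mathrm{k}[T]/T^{e_{i}}$, which has infinitely many subalgebras by Lemma~\ref{lem-kxxn}; together with Remark~\ref{rem-quotient} this contradicts the previous paragraph. \emph{(ii) If $e_{i}\geq 2$ then $\deg P_{i}=1$.} Otherwise $\mathrm{k}[T]/(P_{i}^{e_{i}})$ has infinitely many subalgebras by Lemma~\ref{lem-deg2}, again a contradiction; and when $P_{i}$ is linear, $\mathrm{k}[T]/(P_{i}^{e_{i}})\simeq\mathrm{k}[T]/T^{e_{i}}$ via a translation of $T$. \emph{(iii) At most one index $i$ has $e_{i}\geq 2$.} If $i\neq j$ both had $e_{i},e_{j}\in\{2,3\}$, then by (ii) the algebra $\mathcal{A}$ would have a quotient isomorphic to $\mathrm{k}[T]/T^{e_{i}}\times\mathrm{k}[T]/T^{e_{j}}$, which has infinitely many subalgebras by Lemma~\ref{lem-nilpotent}.

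It only remains to assemble the conclusion. After renumbering, suppose $e_{1}=\cdots=e_{n}=1$ and, by (iii), either $n=s$ or $n=s-1$ with $e_{s}=:m\in\{2,3\}$ and $P_{s}$ linear. Set $\delta=0$ in the first case and $\delta=1$ in the second (choosing an arbitrary $m\in\{2,3\}$ when $\delta=0$). Put $L_{i}:=\mathrm{k}[T]/(P_{i})$ for $1\leq i\leq n$; each $L_{i}$ is a finite algebraic extension of $\mathrm{k}$ generated by the class of $T$. Using (ii) to identify $\mathrm{k}[T]/(P_{s}^{m})\simeq\mathrm{k}[T]/T^{m}$ when $\delta=1$, the displayed isomorphism becomes
\[
\mathcal{A}\simeq L_{1}\times\cdots\times L_{n}\times(\mathrm{k}[T]/T^{m})^{\delta}
\]
(with the first product omitted when $n=0$, which happens precisely when $\mathcal{A}\simeq\mathrm{k}[T]/T^{m}$). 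There is no serious obstacle here beyond bookkeeping: one must apply Remark~\ref{rem-quotient} in the quotient direction, since a direct factor of a product algebra is not a unital subalgebra but is a quotient, and feed each of Lemmas~\ref{lem-kxxn}, \ref{lem-deg2} and~\ref{lem-nilpotent} the appropriate quotient of $\mathcal{A}$.
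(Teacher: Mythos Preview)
Your proof is correct and follows essentially the same route as the paper: Corollary~\ref{cor-monogene} plus the Chinese Remainder Theorem to split $\mathrm{k}[T]/(P)$ into local pieces, then Remark~\ref{rem-quotient} together with Lemmas~\ref{lem-kxxn}, \ref{lem-deg2} and~\ref{lem-nilpotent} to eliminate the forbidden shapes. Your organization into the three numbered constraints (i)--(iii) is slightly tidier than the paper's case-by-case treatment, and your explicit observation that a direct factor is a quotient (not a unital subalgebra) is a useful point that the paper leaves implicit.
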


\begin{proof}
Thanks to Corollary~\ref{cor-monogene}, we get $\mathcal{A}=\mathrm{k}[T]/P$.
Let us write the irreducible decomposition of $P$
\[
P=\prod_{i=1}^{s}{P_{i}}^{n_{i}}%
\]
with $P_{i}\in\mathrm{k}[T]$ irreducible, $n_{i}>0$ and $P_{i}$ and $P_{j}$
non associated for $i\neq j$. Chinese Reminder theorem tells us that
\[
\mathcal{A}\overset{\mathrm{{\text{ {\tiny $k$-alg.}}}}}{\simeq}%
\mathrm{k}[T]/{P_{1}}^{n_{1}}\times\cdots\times\mathrm{k}[T]/{P_{s}}^{n_{s}%
}\,.
\]
In particular, $\mathrm{k}[T]/{P_{i}}^{n_{i}}$ is a quotient of $\mathcal{A}$.

So Remark~\ref{rem-quotient} and Lemma~\ref{lem-deg2} ensures us that
$n_{i}=1$ if $\deg P_{i}\geq2$. In this case $L_{i}=\mathrm{k}[T]/P_{i}$ is a
finite extension of $\mathrm{k}$ generated by one element.

If $\deg P_{i}=1$, then $P_{i}=T-\lambda$ for some $\lambda\in\mathrm{k}$ and
$\mathrm{k}[T]/{P_{i}}^{n_{i}}$ is isomorphic to $\mathrm{k}[T]/T^{n_{i}}$.
Lemma~\ref{lem-kxxn} ensures us that $n_{i}\in\{1,2,3\}$. If $n_{i}=1$ then we
get $\mathrm{k}$. Let us consider the case where $n_{i}\in\{2,3\}$.
Lemma~\ref{lem-nilpotent} tells us that there is at most one factor of this
type and we get the structure result.
\end{proof}

We have just shown that algebras with a finite number of subalgebras have a
certain form (a finite product of fields generated by one element with
possibly a $k[T]/T^{2}$ or $k[T]/T^{3}$ factor). Our aim is now to show that
algebras with this given form have only a finite number of subalgebras. To
prove this, we first show that we can restrict our attention to subalgebras
generated by one element (Lemma~\ref{lem-finite-monogene}) and then give a
description of all the subalgebras generated by one element of such an algebra
(Proposition~\ref{prop-subalgebra-product}).

\begin{lemma}
[Subalgebra generated by one element]\label{lem-finite-monogene} Let
$\mathcal{A}$ be a finite-dimensional algebra over $\mathrm{k}$. Then
$\mathcal{A}$ has only a finite number of subalgebras if and only if
$\mathcal{A}$ has only a finite number of subalgebras generated by one element.
\end{lemma}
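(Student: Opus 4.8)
The plan is to prove the nontrivial implication: if $\mathcal{A}$ has only finitely many subalgebras generated by a single element, then $\mathcal{A}$ has only finitely many subalgebras. (The converse is trivial since a subalgebra generated by one element is in particular a subalgebra.) The key observation is that every finite-dimensional subalgebra $\mathcal{B}$ of $\mathcal{A}$ is the span (as a $\mathrm{k}$-algebra) of its finitely many monogenic subalgebras $\mathrm{k}[b]$, $b \in \mathcal{B}$; so if there are only finitely many monogenic subalgebras $\mathrm{k}[x_1], \ldots, \mathrm{k}[x_N]$ in all of $\mathcal{A}$, then every subalgebra $\mathcal{B}$ is generated by the subfamily $\{x_i : \mathrm{k}[x_i] \subseteq \mathcal{B}\}$, hence is one of the $2^N$ subalgebras generated by subsets of $\{x_1,\ldots,x_N\}$. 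This gives finiteness immediately.

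First I would show $\mathcal{B} = \mathbb{A}\big(\{\,b \in \mathcal{B} : \mathrm{k}[b] \subseteq \mathcal{B}\,\}\big)$, which is clear since $b \in \mathrm{k}[b]$ for every $b \in \mathcal{B}$, so the right-hand side already contains all of $\mathcal{B}$. Next, let $\mathcal{S}$ be the (by hypothesis finite) set of all subalgebras of $\mathcal{A}$ generated by one element; write $\mathcal{S} = \{\mathrm{k}[x_1],\ldots,\mathrm{k}[x_N]\}$. For a given subalgebra $\mathcal{B}$, the monogenic subalgebras contained in $\mathcal{B}$ form a subset of $\mathcal{S}$, say indexed by $I_{\mathcal{B}} \subseteq \{1,\ldots,N\}$, and by the previous sentence $\mathcal{B} = \mathbb{A}\big(\{x_i : i \in I_{\mathcal{B}}\}\big)$. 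Hence the map $\mathcal{B} \mapsto I_{\mathcal{B}}$ is injective from the set of subalgebras of $\mathcal{A}$ into the power set of $\{1,\ldots,N\}$, so there are at most $2^N$ subalgebras.

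The only point requiring a word of care is that a subalgebra $\mathcal{B}$, being a subspace of the finite-dimensional $\mathcal{A}$, is itself finite-dimensional, so it is genuinely recovered from finitely many of its elements — but this is automatic here and there is no real obstacle. The argument is essentially a counting/closure argument and I do not expect any hard step; the slight subtlety is purely bookkeeping, namely that one must phrase the recovery of $\mathcal{B}$ in terms of the monogenic subalgebras \emph{it contains} rather than an arbitrary generating set, so that the relevant index set lives inside the fixed finite set $\{1,\ldots,N\}$.
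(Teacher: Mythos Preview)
Your proof is correct and follows essentially the same approach as the paper's. The paper phrases it as $\mathcal{B}=\bigcup_{y\in\mathcal{B}}\mathrm{k}[y]$, so that each subalgebra is determined by the (finite) family of monogenic subalgebras it contains; your version replaces the set-theoretic union by the subalgebra generated by chosen representatives $x_i$, which is the same idea and yields the same $2^N$ bound.
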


\begin{proof}
The part only if is clear. Let us suppose that $\mathcal{A}$ has only a finite
number of subalgebras generated by one element. Let $\mathcal{B}$ be a
subalgebra of $\mathcal{A}$. We have $\mathcal{B}=\cup_{y\in\mathcal{B}%
}\mathrm{k}[y]$. So $\mathcal{B}$ is a union of subalgebras generated by one
element. There is only a finite number of such subalgebras since there is only
a finite number of subalgebras $\mathrm{k}[y]$.
\end{proof}

Let us now determine subalgebras generated by one element of algebras of the
form $L_{1}\times\cdots\times L_{n}\times\mathrm{k}[T]/T^{m}$ where
$n,m\in\mathbb{N}$, $L_{i}$ is an algebraic field extension of $\mathrm{k}$.

\begin{proposition}
\label{prop-subalgebra-product} Let $m,n\in\mathbb{N}$. For $i\in
\lbrack\,1,m\,]$, let $L_{i}$ be an algebraic field extension of $\mathrm{k}$.
Set $\mathcal{A}=L_{1}\times\cdots\times L_{n}\times\mathrm{k}[T]/T^{m}$. For
simplicity write $L_{n+1}$ for $\mathrm{k}[T]/T^{m}$ and for any $i\in
\lbrack1,n+1]$, let $p_{i}:\mathcal{A}\rightarrow L_{i}$ be the projection on
the $i^{\text{th}}$ factor.

Let $\mathcal{B}$ be a subalgebra of $\mathcal{A}$ generated by one element
and for $i\in\lbrack1\,,n+1]$, $K_{i}=p_{i}(\mathcal{B})\subset L_{i}$. There exists

\begin{itemize}
\item[$(i)$] a partition of $[\,1,n+1]$%
\[
\lbrack1\,,\,n+1]=I_{1}\bigsqcup\cdots\bigsqcup I_{r}%
\]
with $n+1\in I_{r}$ if $m\neq0$.

\item[$(ii)$] a family of integers $(i_{1},\ldots,i_{r})\in I_{1}\times
\cdots\times I_{r}$ with $i_{r}=n+1$ if $m\neq0$. For every $j\in
\lbrack1\,,\,r]$, let us write $I_{j}=\{i_{j},u_{j,1},\ldots,u_{j,s_{j}}\}$.

\item[$(iii)$] For every $j\in\lbrack1\,,\,r]$ and every $\ell\in
\lbrack1\,,\,s_{j}]$, $\mathrm{k}$-algebras morphisms $\sigma_{j,\ell
}:K_{i_{j}}\rightarrow K_{u_{j,\ell}}$
\end{itemize}

such that after reordering the factors of the product, we have
\[
\scriptstyle\mathcal{B}=\{(x_{1},\sigma_{1,1}(x_{1}),\ldots,\sigma_{1,s_{1}%
}(x_{1}),x_{2},\ldots,\sigma_{2,s_{2}}(x_{2}),\ldots,x_{r},\ldots
,\sigma_{r,s_{r}}(x_{r})),\quad x_{j}\in K_{i_{j}}\text{ for }j\in
\lbrack1\,,\,r]\}\,.
\]

\end{proposition}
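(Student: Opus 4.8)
The plan is to analyze the subalgebra $\mathcal{B}$ through its images $K_i = p_i(\mathcal{B})$ under the coordinate projections. Since $\mathcal{B} = \mathrm{k}[b]$ is generated by a single element $b = (b_1,\ldots,b_{n+1})$, each $K_i = \mathrm{k}[b_i]$ is itself a monogenic subalgebra of $L_i$, hence (for $i \le n$) a finite field extension of $\mathrm{k}$, and $K_{n+1}$ is a monogenic subalgebra of $\mathrm{k}[T]/T^m$. Thus $\mathcal{B}$ embeds in $K_1 \times \cdots \times K_{n+1}$, and by projecting it suffices to describe which subalgebras of a product of the form $K_1 \times \cdots \times K_{n+1}$ arise as the graph-type subalgebra $\mathrm{k}[b]$ when the projection to each factor is surjective. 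So I would replace $L_i$ by $K_i$ from the start and assume each $p_i|_{\mathcal{B}}$ is onto.

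**The graph structure via a Goursat-type argument.** The core of the proof is a Chinese-Remainder / Goursat-lemma style analysis. Define an equivalence relation on $[1,n+1]$ by declaring $i \sim i'$ whenever the pair of projections $(p_i, p_{i'})\colon \mathcal{B} \to K_i \times K_{i'}$ is \emph{not} surjective, i.e. there is a nontrivial algebraic relation linking coordinates $i$ and $i'$. First I would check this is genuinely an equivalence relation: transitivity is the delicate point, and here is where monogenicity of $\mathcal{B}$ is essential — if $i \sim i'$ and $i' \sim i''$ then $b_i, b_{i'}$ satisfy a relation and $b_{i'}, b_{i''}$ satisfy one, and because everything is a polynomial in the single generator, composing these forces a relation between $b_i$ and $b_{i''}$. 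Let $I_1,\ldots,I_r$ be the resulting partition. On each block $I_j$, pick any index $i_j \in I_j$ (taking $i_j = n+1$ when $n+1 \in I_j$, which is forced to be the last block if $m \ne 0$). For $\ell$ ranging over the other elements $u_{j,\ell}$ of $I_j$, the relation between coordinates $i_j$ and $u_{j,\ell}$ should be shown to be the graph of a $\mathrm{k}$-algebra morphism $\sigma_{j,\ell}\colon K_{i_j} \to K_{u_{j,\ell}}$: since $p_{i_j}|_{\mathcal B}$ is surjective, every element of $K_{i_j}$ is hit, and because the joint projection to $(i_j,u_{j,\ell})$ has image a proper subalgebra of $K_{i_j}\times K_{u_{j,\ell}}$ that still projects \emph{onto} each factor, that subalgebra must be the graph of an isomorphism between quotients; surjectivity of both projections then makes it the graph of a morphism defined on all of $K_{i_j}$ (this uses that the $K_i$ are fields, resp. local for the nilpotent factor, so their monogenic subalgebras have no room for proper "correspondences"). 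Then I would verify that across distinct blocks the projections are jointly surjective by construction, so an iterated application of the two-factor case (essentially CRT) shows $\mathcal{B}$ is exactly the set of tuples $(x_1, \sigma_{1,1}(x_1),\ldots, x_r, \ldots, \sigma_{r,s_r}(x_r))$ with $x_j \in K_{i_j}$ free, after reordering factors to group each block together.

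**Consistency of the $\sigma$'s and the nilpotent factor.** One point needing care is that within a block the chosen morphisms must be mutually compatible — but since all of them have common source $K_{i_j}$ and I only ever compose "out" of the distinguished index, no cocycle condition is needed; any family $\sigma_{j,\ell}$ works, and conversely every such family gives a genuine subalgebra, which handles the "existence" direction implicitly. The special role of $L_{n+1} = \mathrm{k}[T]/T^m$ requires a small separate observation: a $\mathrm{k}$-algebra morphism out of a field $K_{i_j}$ into $\mathrm{k}[T]/T^m$ lands in $\mathrm{k}$ (the only subfield, since the target is local with residue field $\mathrm{k}$ and a field maps to a field), so if $n+1$ were not the distinguished index of its block the block would degenerate; hence we normalize $i_r = n+1$ when $m \ne 0$, i.e. the nilpotent coordinate is always chosen as the free one in its block.

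**Expected main obstacle.** The hard part will be the transitivity of the relation $\sim$ and, relatedly, pinning down that a proper subalgebra of $K_i \times K_{i'}$ surjecting onto both factors is exactly a graph — i.e. the Goursat step in the category of monogenic $\mathrm{k}$-algebras. For the field factors this is classical (a subfield of $K_i \times K_{i'}$ projecting onto both is the graph of a $\mathrm{k}$-isomorphism between them when $[K_i:\mathrm{k}]$ equals the image dimension), but one must also run the argument uniformly when one factor is $\mathrm{k}[T]/T^m$, using that $\mathcal B$ is generated by a single element to transport relations along compositions of polynomials. Once that bookkeeping is in place, the reordering of factors and the passage from pairwise data to the global graph description is routine.
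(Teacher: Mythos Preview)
Your proposal is workable but takes a more abstract route than the paper, and in doing so you create an obstacle that the paper's argument dissolves entirely.

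The paper does not use a Goursat-type relation ``$(p_i,p_{i'})$ is not jointly surjective''. Instead it writes the generator as $y=(y_1,\ldots,y_{n+1})$ and defines the partition \emph{concretely via minimal polynomials}: for $i,j\le n$, put $i\sim j$ iff $\mu_{y_i}=\mu_{y_j}$, and attach $n+1$ to the class of those $i$ with $y_i=\lambda$ (where $\mu_{y_{n+1}}=(T-\lambda)^s$), or leave $\{n+1\}$ alone if no such $i$ exists. Transitivity is then trivial. The morphisms $\sigma$ are simply the canonical isomorphisms $\mathrm{k}[y_{i_j}]\to\mathrm{k}[y_\alpha]$ coming from equality of minimal polynomials (and, for the nilpotent block, the quotient map $\mathrm{k}[T]/(T-\lambda)^s\to\mathrm{k}[T]/(T-\lambda)$). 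Finally, independence of blocks is obtained by an explicit B\'ezout/CRT identity in $\mathrm{k}[T]$: the minimal polynomials of the chosen representatives are pairwise coprime, so one writes down polynomials $Q_j$ with $Q_j(y_{i_j})=1$ and $Q_j(y_{i_\ell})=0$ for $\ell\neq j$, which exhibits the idempotents separating the blocks.

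If you unwind your relation, you recover exactly the paper's: the image of $(p_i,p_{i'})$ in $K_i\times K_{i'}$ is $\mathrm{k}[T]/\mathrm{lcm}(\mu_{y_i},\mu_{y_{i'}})$, so ``not jointly surjective'' is equivalent to $\gcd(\mu_{y_i},\mu_{y_{i'}})\neq 1$, which for irreducible $\mu_{y_i},\mu_{y_{i'}}$ means $\mu_{y_i}=\mu_{y_{i'}}$. So your ``delicate'' transitivity step and the Goursat analysis would, if carried out, force you back to minimal polynomials anyway. Your handwave for transitivity (``composing relations because everything is a polynomial in the single generator'') is not a proof as it stands --- in the block containing $n+1$ the two-factor images are graphs of non-injective morphisms, so composing correspondences is not formal. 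The paper's concrete definition makes this step disappear. In short: your strategy is correct, but the paper's minimal-polynomial formulation is both shorter and avoids precisely the obstacle you flagged as the hard part.
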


\begin{proof}
Let us start by giving an overview of the proof. Let $y\in\mathcal{A}$ such
that $\mathcal{B}=\mathrm{k}[y]$. We write $y=(y_{1},\ldots,y_{n+1})$ with
$y_{i}\in L_{i}$ for all $i$. The partition we are looking for is in fact
given by gathering together the $y_{i}$ with the same minimal polynomial
(except for $y_{n+1}$ which may play a special role). After this, we link
$y_{i}$ and $y_{j}$ with the same minimal polynomial through a morphism of
algebras. We finally get the independence of blocks with different minimal
polynomial using the Chinese remainder theorem.

%Let $y\in\mathcal{A}$ such that $\mathcal{B}=\mathrm{k}[y]$. We write
%$y=(y_{1},\ldots,y_{n+1})$ with $y_{i}\in L_{i}$ for all $i$.
Let us now begin the proof. We have $\mathrm{k}[y]=\{(Q(y_{1}),\ldots
,Q(y_{n+1})),\ Q\in\mathrm{k}[T]\}$ and then $K_{i}=\mathrm{k}[y_{i}]$. We
define the equivalence relation $\sim$ on $[1\,,\,n]$ by $i\sim j$ if $y_{i}$
and $y_{j}$ have the same minimal polynomial over $\mathrm{k}$. This defines a
partition of $[1\,,\,n]=J_{1}\sqcup\cdots\sqcup J_{s}$. Let us now consider
the index $n+1$. For $i=n+1$ (if $m\neq0$), the minimal polynomial of
$y_{n+1}$ is of the form $(T-\lambda)^{s}$ for some $\lambda\in\mathrm{k}$ and
$s\leq m$. If there exists $i\in\lbrack1\,,\,n]$ such that $y_{i}=\lambda
\in\mathrm{k}$ then we add $n+1$ to the equivalence class of $i$ and obtain a
partition of $[1\,,\,n]$ in $s$ parts. In this case, we set $r=s$ and we
number these parts such that $n+1$ is in the part indexed by $r$. If there
does not exist an $i$ such that $y_{i}=\lambda$, then we set $r=s+1$ and add
the part $\{n+1\}$ to the partition of $[1\,,\,n]$ to get the desired
partition of $[1\,,\,n+1]$.

Finally, we write the partition of $[1\,,\,n+1]$ we just obtain:
\[
\lbrack1\,,\,n+1]=I_{1}\bigsqcup\cdots\bigsqcup I_{r}%
\]

For each part of the partition, we choose a representative $i_{j}$ of the
subset. If $n+1$ is not alone in his part, then we choose $n+1$ to be the
representative of this subset.

%For $i\in\lbrack1\,,\,n]$ the minimal polynomial of $y_{i}$ over $\mathrm{k}$ is
%irreducible.

First, assume that $m=0$. For $j\in\lbrack1\,,\,r]$ and $\alpha\in I_{j}$ the
elements $y_{\alpha}$ and $y_{i_{j}}$ have the same minimal polynomial so
there exists an isomorphism $\sigma$ of extensions from $\mathrm{k}[y_{i_{j}%
}]=K_{i_{j}}$ to $\mathrm{k}[y_{\alpha}]=K_{\alpha}$ sending $y_{i_{j}}$ to
$y_{\alpha}$. In particular, we have $\sigma(Q(y_{i_{j}}))=Q(y_{\alpha})$ for
all $Q\in\mathrm{k}[T]$.

To get the desired description of $\mathcal{B}$, it suffices now to find
$Q_{j}\in\mathrm{k}[T]$ such that $Q_{j}(y_{i_{j}})=1$ and $Q_{j}(y_{i_{\ell}%
})=0$ for all $\ell\neq j$. This is possible since the minimal polynomial
$P_{\ell}$ of the $y_{i_{\ell}}$ are prime to each other (they are irreducible
and distinct): we write
\[
1=UP_{j}+V\prod_{\ell\neq j}P_{\ell}%
\]
and consider $Q_{j}=V\prod_{\ell\neq j}P_{\ell}$.

Let us now assume that $m\neq0$. For $j\in\lbrack1\,,\,r-1]$, there is no
difference with the preceding case. For $j=r$, we have to be more careful: for
$\alpha\neq n+1\in I_{r}$, we can define the following morphisms of algebras
\[
K_{n+1}=\mathrm{k}[y_{n+1}]\overset{\mathrm{{\text{ {\tiny $k$-alg.}}}}%
}{\simeq}\mathrm{k}[T]/(T-\lambda)^{s}\rightarrow\mathrm{k}[T]/(T-\lambda
)\overset{\mathrm{{\text{ {\tiny $k$-alg.}}}}}{\simeq}\mathrm{k}=K_{\alpha}%
\]
where the first isomorphism sends $y_{n+1}$ to the class of $T$ and the second
isomorphism sends the class of $T$ to $\lambda\in\mathrm{k}=K_{\alpha}$. So by
composition, we get the desired morphism of algebras.

Finally, to get the desired description of $\mathcal{B}$ in this case, it
suffices to adapt the Chinese remainder argument. For this, we remark that
$(T-\lambda)^{s}$ is prime with every irreducible polynomial over $\mathrm{k}$
except $T-\lambda$. But the index $i\in\lbrack1\,,\,n]$ such that $T-\lambda$
is the minimal polynomial $y_{i}$ are precisely in $I_{r}$.
\end{proof}

\begin{corollary}
\label{cor-rec-struct} Let $L_{1},\ldots,L_{n}$ be finite field extensions of
$\mathrm{k}$ generated over $\mathrm{k}$ by one element. Consider also two
integers $\delta\in\{0,1\}$ and $m\in\{2,3\}$. Then
\[
\mathcal{A}=L_{1}\times\cdots\times L_{n}\times(\mathrm{k}[T]/T^{m})^{\delta}%
\]
has only a finite number of subalgebras.
\end{corollary}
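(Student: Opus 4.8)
The plan is to feed the explicit classification of Proposition~\ref{prop-subalgebra-product} into Lemma~\ref{lem-finite-monogene}, and then to check that each ingredient in that classification can be chosen in only finitely many ways. First I would invoke Lemma~\ref{lem-finite-monogene}: it suffices to prove that $\mathcal{A}$ has only finitely many subalgebras generated by one element. Putting $\mathcal{A}$ in the shape required by Proposition~\ref{prop-subalgebra-product} (its nilpotent factor being $\mathrm{k}[T]/T^{m}$ if $\delta=1$, and absent, i.e. of exponent $0$, if $\delta=0$), every one-generated subalgebra $\mathcal{B}$ is determined by the data: (a) a partition $[1\,,\,n+1]=I_{1}\sqcup\cdots\sqcup I_{r}$; (b) a representative $i_{j}$ of each block, with $i_{r}=n+1$ when $\delta=1$; (c) for each $j$, the subalgebra $K_{i_{j}}=p_{i_{j}}(\mathcal{B})$ of $L_{i_{j}}$; and (d) for each $j$ and each $\ell\in[1\,,\,s_{j}]$, a $\mathrm{k}$-algebra morphism $\sigma_{j,\ell}\colon K_{i_{j}}\to L_{u_{j,\ell}}$. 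Indeed $\mathcal{B}$ is then recovered from the displayed formula of the proposition, and $K_{u_{j,\ell}}=\sigma_{j,\ell}(K_{i_{j}})$ requires no separate mention. Since there are obviously only finitely many partitions and systems of representatives, everything reduces to bounding the choices in (c) and (d).

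For (c): if $i_{j}\le n$, then $K_{i_{j}}$ is a finite-dimensional $\mathrm{k}$-subalgebra of the field $L_{i_{j}}$, hence a subfield, i.e. an intermediate field of $L_{i_{j}}/\mathrm{k}$; as $L_{i_{j}}$ is generated over $\mathrm{k}$ by one element, it is classical (Steinitz) that a finite simple extension has only finitely many intermediate fields. If $i_{j}=n+1$ (so $\delta=1$ and $j=r$), then $K_{n+1}$ is a subalgebra of $\mathrm{k}[T]/T^{m}$ with $m\in\{2,3\}$, and a direct inspection shows this algebra has at most three subalgebras. For (d): observe from (b) that $n+1$ is always the representative of its own block, so $u_{j,\ell}\le n$ and the target $L_{u_{j,\ell}}$ is a \emph{field}. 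Writing $K_{i_{j}}=\mathrm{k}[\xi]$, a $\mathrm{k}$-algebra morphism $\sigma_{j,\ell}$ is determined by $\sigma_{j,\ell}(\xi)$, which must be a root, in the field $L_{u_{j,\ell}}$, of the minimal polynomial of $\xi$ over $\mathrm{k}$; these are finite in number. (When $i_{j}=n+1$, $\xi$ is a scalar plus a nilpotent, so necessarily $\sigma_{j,\ell}(\xi)\in\mathrm{k}$ and the morphism is unique.) Hence each of (a)--(d) offers only finitely many choices, $\mathcal{A}$ has finitely many one-generated subalgebras, and Lemma~\ref{lem-finite-monogene} concludes.

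I do not anticipate a genuinely hard step; the delicate points are twofold. The finiteness in (c) for the field factors really uses that each $L_{i}$ is a \emph{simple} finite extension (Steinitz's bound on intermediate fields), and in (d) one must notice that the index $n+1$ never occurs among the $u_{j,\ell}$. This last observation is what keeps the targets of all the gluing morphisms $\sigma_{j,\ell}$ inside fields; otherwise one would meet the infinitely many solutions of $X^{2}=0$ inside $\mathrm{k}[T]/T^{2}$ and the count would break down.
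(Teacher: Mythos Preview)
Your proof is correct and follows essentially the same route as the paper: reduce to one-generated subalgebras via Lemma~\ref{lem-finite-monogene}, invoke the description of Proposition~\ref{prop-subalgebra-product}, and then argue that each piece of data (partition, subfields $K_{i_j}$, morphisms $\sigma_{j,\ell}$) admits only finitely many values. The only notable difference is in step~(d): the paper appeals to Dedekind's Lemma to bound the number of $\mathrm{k}$-algebra morphisms into a finite extension, whereas you use the more hands-on observation that such a morphism is determined by the image of a generator, which must be a root of its minimal polynomial---both arguments are valid and equally short.
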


\begin{proof}
Lemma~\ref{lem-finite-monogene} shows that it suffices to prove that
$\mathcal{A}$ has only a finite number of subalgebras generated by one
element. But proposition~\ref{prop-subalgebra-product} implies that a
subalgebra of $\mathcal{A}$ generated by one element is determined by a family
of subalgebras of the $L_{i}$ and by algebra morphisms between them. But each
$L_{i}$ has only a finite number of subalgebras and moreover Dedekind Lemma
(\cite{bourbaki}) ensures us that there exists only a finite number of
$\mathrm{k}$-algebra morphisms with values in a finite extension of
$\mathrm{k}$. So there is only a finite number of such subalgebras.
\end{proof}

Finally we get the structure theorem for algebras with only a finite number of subalgebras.

\begin{theorem}
\label{TH_classification}Let $\mathcal{A}$ be a finite-dimensional algebra
over $\mathrm{k}$. Then the following statements are equivalent.

\begin{enumerate}
\item $\mathcal{A}$ has only a finite number of subalgebras.

\item There exists finite algebraic extensions $L_{1},\ldots,L_{n}$ generated
over $\mathrm{k}$ by one element and two integers $\delta\in\{0,1\}$ and
$m\in\{2,3\}$ such that
\begin{equation}
\mathcal{A}\overset{\mathrm{{\text{ {\tiny $k$-alg.}}}}}{\simeq}L_{1}%
\times\cdots\times L_{n}\times(\mathrm{k}[T]/T^{m})^{\delta}. \label{form}%
\end{equation}

\item There exists $g\in\mathcal{A}$ such that $\mathcal{A}=\mathrm{k}[g]$
where the minimal polynomial $\mu_{g}$ of $g$ has the form%
\[
\mu_{g}=P_{1}\cdots P_{n}Q^{m\delta}%
\]
where $P_{1},\ldots,P_{n},Q$ are distinct irreducible polynomials in
$\mathrm{k}[T],$ $\delta\in\{0,1\}$, $m\in\{2,3\}$ and $\deg Q=1$.
\end{enumerate}
\end{theorem}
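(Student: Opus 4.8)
The plan is to establish the chain of implications $(1)\Rightarrow(2)\Rightarrow(3)\Rightarrow(1)$, since almost all the ingredients are already assembled in the preceding lemmas and corollaries. The implication $(1)\Rightarrow(2)$ is exactly the content of Corollary~\ref{cor-structure}: if $\mathcal{A}$ has only finitely many subalgebras, then by Corollary~\ref{cor-monogene} it is of the form $\mathrm{k}[T]/(P)$, and the successive Lemmas~\ref{lem-kxxn}, \ref{lem-deg2}, \ref{lem-nilpotent} force the irreducible decomposition of $P$ to have the shape described, giving the product decomposition \eqref{form}. Conversely, $(2)\Rightarrow(1)$ is precisely Corollary~\ref{cor-rec-struct}, which uses Lemma~\ref{lem-finite-monogene} (reduction to monogenic subalgebras), Proposition~\ref{prop-subalgebra-product} (parametrisation of monogenic subalgebras of a product by subalgebras of the factors together with $\mathrm{k}$-algebra morphisms between them), the finiteness of the set of subalgebras of each finite extension $L_i$, and Dedekind's lemma on the finiteness of the set of $\mathrm{k}$-algebra homomorphisms into a finite extension.

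It remains to bridge $(2)$ and $(3)$, i.e.\ to translate the product form into a statement about the minimal polynomial of a generator. Starting from $(2)$: each $L_i$ is generated over $\mathrm{k}$ by a single element, say $L_i=\mathrm{k}[\xi_i]$ with $\xi_i$ having irreducible minimal polynomial $P_i$; we may further arrange, by replacing $\xi_i$ with $\xi_i-c$ for suitable $c\in\mathrm{k}$ (using that $\mathrm{k}$ is infinite), that the $P_i$ are pairwise distinct and distinct from the degree-one polynomial $Q=T$ attached to the factor $\mathrm{k}[T]/T^m$. Set $g=(\xi_1,\dots,\xi_n,\bar T)\in\mathcal{A}$. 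By the Chinese remainder theorem, $\mathrm{k}[g]$ surjects onto each factor (the $P_i$ being pairwise coprime and coprime to $Q^m$), hence $\mathcal{A}=\mathrm{k}[g]$, and the minimal polynomial of $g$ is the lcm of the $P_i$ and $Q^{m}$, namely $\mu_g=P_1\cdots P_n\,Q^{m\delta}$, which is $(3)$. For $(3)\Rightarrow(2)$, given $\mathcal{A}=\mathrm{k}[g]\simeq\mathrm{k}[T]/(\mu_g)$ with $\mu_g=P_1\cdots P_n Q^{m\delta}$ a product of distinct irreducibles with $\deg Q=1$, the Chinese remainder theorem yields $\mathcal{A}\simeq\prod_i\mathrm{k}[T]/(P_i)\times\mathrm{k}[T]/(Q^{m\delta})$; each $\mathrm{k}[T]/(P_i)=:L_i$ is a finite extension generated by one element, and $\mathrm{k}[T]/(Q^{m})\simeq\mathrm{k}[T]/T^{m}$ after the affine change of variable sending $Q=T-\lambda$ to $T$, which is \eqref{form}.

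The only genuinely delicate point in this last bridge is arranging, in $(2)\Rightarrow(3)$, that a \emph{single} element $g$ generates the whole product: one must ensure the components' minimal polynomials are pairwise coprime so that the Chinese remainder theorem applies and $\mathrm{k}[g]$ is not a proper subalgebra. This is where infiniteness of $\mathrm{k}$ is used, exactly as in Lemma~\ref{lem-union-sbspace} and the construction underlying Corollary~\ref{cor-monogene}: translating generators by scalars avoids the finitely many ``bad'' coincidences of minimal polynomials. Everything else is a routine repackaging of the Chinese remainder theorem and the already-proven corollaries, so the proof is short.
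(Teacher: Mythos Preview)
Your proposal is correct and follows essentially the same approach as the paper, which simply cites Corollaries~\ref{cor-structure} and~\ref{cor-rec-struct} for the equivalence $(1)\Leftrightarrow(2)$ and leaves the rest implicit. Your explicit treatment of $(2)\Leftrightarrow(3)$ via the Chinese remainder theorem and scalar translation of generators (using that $\mathrm{k}$ is infinite) fills in a step the paper does not spell out.
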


\begin{proof}
The theorem follows from~Corollary~\ref{cor-structure} and
Corollary~\ref{cor-rec-struct}.
\end{proof}

\begin{remark}
Consider an infinite-dimensional commutative algebra $\mathcal{A}$ with a
finite number of finite-dimensional subalgebras $\mathcal{B}_{1}%
,\ldots,\mathcal{B}_{r}$.\ There exist $g_{1},\ldots,g_{r}$ in $\mathcal{A}$
such that $\mathcal{B}_{i}=\mathrm{k}[g_{i}]$ for any $i=1,\ldots,r$.\ Then
$\mathcal{B}=\mathrm{k}[g_{1},\ldots,g_{r}]$ is a finite-dimensional
subalgebra of $\mathcal{A}$ containing the subalgebras $\mathcal{B}_{1}%
,\ldots,\mathcal{B}_{r}$. Therefore $\mathcal{B}$ coincides in fact with one
of the algebras $\mathcal{B}_{i}$.\ This means that all the finite-dimensional
subalgebras of $\mathcal{A}$ appear as subalgebras of the finite-dimensional
algebra $\mathcal{B}\subset\mathcal{A}$ and we have a structure theorem for
$\mathcal{B}$.
\end{remark}

\begin{remark}
Consider a $k$-algebra $\mathcal{A}$ with a finite number of subalgebras. Then
$\mathcal{A}$ is finite-dimensional over $k$. Indeed, for $y\in\mathcal{A}$,
$k[y]$ is finite-dimensional. Otherwise, $k[y]$ would be isomorphic to $k[T]$
and would have infinitely many subalgebras. Moreover, there are finitely many
subalgebras of the form $k[y]$. Write these algebras $k[y_{1}],\ldots
,k[y_{r}]$. We then have
\[
\mathcal{A}=\bigcup_{i=1}^{r}k[y_{i}]=\sum_{i=1}^{r}k[y_{i}]
\]
since for each $y\in\mathcal{A}$, the algebra $k[y]$ coincides with one of the
algebras $k[y_{1}],\ldots,k[y_{r}]$. This equality shows that $\mathcal{A}$ is finite-dimensional.
\end{remark}

\subsection{Some examples}

\label{Subsec_Examples}

\subsubsection{Algebras of functions defined on a finite set}

Let $S=\{s_{1},\ldots,s_{n}\}$ be a finite set and write $\mathcal{F}_{S}$ for
the algebra of functions $f:S\rightarrow\mathrm{k}$.\ The algebra
$\mathcal{F}_{S}$ is clearly isomorphic to $\mathrm{k}^{n}$. Thus, applying
Theorem~\ref{TH_classification} to $\mathcal{F}_{S}$ allows us to recover the
very classical following fact : $\mathcal{F}_{S}$ admits a finite number of
subalgebras parametrized by the partitions $S=\bigsqcup\limits_{i=1}^{m}S_{m}$
of $S$. The subalgebra $\mathcal{F}_{S_{1},\ldots,S_{m}}$ associated to such a
partition is the algebra of functions $f\in\mathcal{F}_{S}$ which are constant
on each set $S_{i},i=1,\ldots,m$.\ Observe also we have $\mathcal{F}%
_{S}=\mathrm{k}[f]$ for any function $f$ such that $f(s_{i})\neq f(s_{j})$ for
any $i\neq j$.

A special case is the algebra $\mathcal{F}_{G}$ of complex central functions
defined on a group $G$. Here, $S$ is the set of conjugacy classes of $G$. In
particular the characters of $G$ belongs to $\mathcal{F}_{G}$.\ Let us
consider $A=\{\chi_{0}=1,\chi_{1},\ldots,\chi_{r}\}$ and $B=\{\varphi
_{0}=1,\varphi_{1},\ldots,\varphi_{s}\}$ two subsets of irreducible characters
corresponding to the irreducible representations $U_{0}=\mathrm{k}%
,U_{1},\ldots,U_{r}$ and $V_{0}=\mathrm{k},V_{1},\ldots,V_{s},$ respectively.
Recall that for any $(i,j)\in\{0,\ldots,r\}\times\{0,\ldots,s\}$, $\chi
_{i}\varphi_{j}$ is the characters of the tensor product $U_{i}\otimes V_{j}$.
Let $\mathcal{V}$ be the set of characters of the representations obtained as
direct sums of some copies of the $U_{i}\otimes V_{j}$'s. Observe
$\mathcal{V}$ is not a $\mathrm{k}$-space since it only contains linear
combinations of the $\chi_{i}\varphi_{j}$ with nonnegative integer
coefficients. Nevertheless, we have $\mathrm{k}\langle\mathcal{V}%
\rangle=\mathrm{k}\langle AB\rangle$ since the family $\{\chi_{i}\varphi
_{j}\mid(i,j)\in\{0,\ldots,r\}\times\{0,\ldots,s\}\}$ generates $\mathrm{k}%
\langle AB\rangle$ as a $\mathrm{k}$-space. In particular, $\mathrm{k}\langle
AB\rangle$ admits a basis of characters in $\mathcal{V}$. It follows that
$\dim_{\mathrm{k}}(AB)$ is the maximal number of linearly independent
characters in $\mathcal{V}$. When $G$ is abelian or the characters $\chi_{i}$
and $\varphi_{j}$ are linear, $\dim_{\mathrm{k}}(AB)$ is simply the
cardinality of $\{\chi_{i}\varphi_{j}\mid(i,j)\in\{0,\ldots,r\}\times
\{0,\ldots,s\}\}$ since distinct linear characters are always independent.

\subsubsection{Matrix subalgebras with finitely many subalgebras}

It is easy to construct subalgebras of matrix algebras with the form
(\ref{form}). Indeed consider irreducible polynomials $P_{1},\ldots,P_{n},Q$
and integers $m\in\{2,3\}$ and $\delta\in\{0,1\}$ as in Theorem
\ref{TH_classification} and their companion matrices $\mathcal{C}_{P_{1}%
},\ldots,\mathcal{C}_{P_{n}},\mathcal{C}_{Q^{m}}$. Set
\[
r=\deg(P_{1})+\cdots+\deg(P_{n})+m\delta\deg(Q)
\]
and define $M$ as the $r\times r$ matrix with coefficients in $\mathrm{k}$
obtained as the block diagonal matrix with blocs $\mathcal{C}_{P_{1}}%
,\ldots,\mathcal{C}_{P_{n}},\mathcal{C}_{Q^{m}}$ when $\delta=1$ and
$\mathcal{C}_{P_{1}},\ldots,\mathcal{C}_{P_{n}}$ when $\delta=0$. Then the
subalgebra $\mathrm{k}[M]$ of $\mathcal{M}_{r}(\mathrm{k})$ has the form
(\ref{form}).

\subsubsection{Algebras of complex valued continuous functions on a connected
space}

Consider $I$ a connected space and define $\mathcal{C}_{I}$ as the
$\mathbb{C}$-algebra of continuous functions $f:I\rightarrow\mathbb{C}$%
.$\ $Then, the unique finite-dimensional subalgebra of $\mathcal{C}_{I}$ is
that of constant functions. Indeed if we consider $\mathcal{A}$ such a
subalgebra and $f\in\mathcal{A}$, then the minimal polynomial $\mu_{f}$ is
such that $\mu_{f}(f)(x)=0$ for any $x\in I$. Hence all the values of the
connected set $\operatorname{Im}f$ are zeroes for $\mu_{f}$. Since the only
finite connected sets of $\mathbb{C}$ are singletons, the set
$\operatorname{Im}f$ is reduced to a point and $f$ is constant.

\section{Kneser type theorems}

\label{Sec_Kneser}In this Section, we state an analogue of Kneser's theorem
for algebras.

\subsection{Kneser-Diderrich theorem for a wide class of algebras}

In this section, $\mathcal{A}$ satisfies \textbf{H}$_{\mathrm{s}}$ or
\textbf{H}$_{\mathrm{w}}$. Let us consider a finite nonempty subset $A$ of
$\mathcal{A}_{\ast}$.\ We say that $A$ is commutative when $aa^{\prime
}=a^{\prime}a$ for any $a,a^{\prime}\in A$. This then implies that the
elements of $\mathrm{k}\langle A\rangle$ are pairwise commutative.\ Moreover
the algebra $\mathbb{A}(A)$ generated by $A$ is then commutative. Typical
examples of commutative sets are geometric progressions $A=\{a^{r}%
,a^{r+1},\ldots,a^{r+s}\}$ with $r$ and $s$ integers. The following theorem is
an analogue, for algebras, of a theorem by Diderrich \cite{Die} extending
Kneser's theorem for arbitrary groups when only the subset $A$ is assumed commutative.

\begin{theorem}
\label{Th_KL_Acom} Assume $\mathcal{A}$ satisfies \textbf{H}$_{\mathrm{w}}$
and consider $A$ and $B$ be two finite nonempty subsets of $\mathcal{A}_{\ast
}$ such that $\mathrm{k}\langle A\rangle\cap U(\mathcal{A})\neq\emptyset$ and
$\mathrm{k}\langle B\rangle\cap U(\mathcal{A})\neq\emptyset$. Assume that $A$
is commutative and $\mathbb{A}(A)$ admits a finite number of
finite-dimensional subalgebras. Let $\mathcal{H}:=\mathcal{H}_{l}(AB).$

\begin{enumerate}
\item We have
\[
\dim_{\mathrm{k}}(AB)\geq\dim_{\mathrm{k}}(A)+\dim_{\mathrm{k}}(B)-\dim
(\mathcal{H})
\]
In particular, if $AB$ is not left periodic
\[
\dim_{\mathrm{k}}(AB)\geq\dim_{\mathrm{k}}(A)+\dim_{\mathrm{k}}(B)-1
\]

\item If $\mathcal{A}$ is commutative, then
\[
\dim_{\mathrm{k}}(AB)\geq\dim_{\mathrm{k}}(\mathcal{H}A)+\dim_{\mathrm{k}%
}(\mathcal{H}B)-\dim_{\mathrm{k}}(\mathcal{H})\geq\dim_{\mathrm{k}}%
(A)+\dim_{\mathrm{k}}(B)-\dim_{\mathrm{k}}(\mathcal{H})
\]
%where
%\[
%\mathcal{H}:=\mathcal{H}(AB).
%\]

\end{enumerate}
\end{theorem}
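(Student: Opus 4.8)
The plan is to adapt the classical Diderrich/Kneser transfer strategy to the linear setting, exploiting Proposition~\ref{Prop_base_inv} to work with bases of invertible elements. First I would reduce to the case where $\mathcal{A}$ itself satisfies \textbf{H}$_{\mathrm{s}}$: since $\mathcal{A}$ is by hypothesis a subalgebra of such an algebra $\mathcal{A}'$, and since $\mathrm{k}\langle A\rangle$, $\mathrm{k}\langle B\rangle$, $\mathrm{k}\langle AB\rangle$ and the stabilizer $\mathcal{H}_l(AB)$ are all computed the same way inside $\mathcal{A}$ or $\mathcal{A}'$ (invertibility of an element of $\mathcal{A}$ inside $\mathcal{A}'$ only helps), we may assume $\mathcal{A}$ verifies \textbf{H}$_{\mathrm{s}}$. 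Next, by translating $A\mapsto a_0^{-1}A$ and $B\mapsto Bb_0^{-1}$ for invertible elements of the respective spans (harmless for all quantities in the statement, since $\mathcal{H}_l(a_0^{-1}AB b_0^{-1})=\mathcal{H}_l(AB)$ as $a_0$ is a unit), I would arrange $1\in\mathrm{k}\langle A\rangle\cap\mathrm{k}\langle B\rangle$, and then invoke Proposition~\ref{Prop_base_inv} to fix bases of $\mathrm{k}\langle A\rangle$ and $\mathrm{k}\langle B\rangle$ consisting of invertible elements; replacing $A,B$ by these bases does not change any span. The point of commutativity of $A$ is that $\mathbb{A}(A)$ is a commutative algebra, and by the Remark following Theorem~\ref{TH_classification} its finitely many finite-dimensional subalgebras all sit inside one finite-dimensional subalgebra $\mathcal{B}\subseteq\mathbb{A}(A)$ which, being commutative with finitely many subalgebras, has the structure \eqref{form} given by Theorem~\ref{TH_classification}.

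The heart of the argument is an induction, as in Diderrich's and Kneser's proofs, on $\dim_{\mathrm{k}}(B)$ (or on $\dim_{\mathrm{k}}(A)+\dim_{\mathrm{k}}(B)$), using a "linear $e$-transform": given invertible $a\in\mathrm{k}\langle A\rangle$, replace the pair $(\mathrm{k}\langle A\rangle,\mathrm{k}\langle B\rangle)$ by $(\mathrm{k}\langle A\rangle+a\,\mathrm{k}\langle B\rangle,\ \mathrm{k}\langle A\rangle\cap a\,\mathrm{k}\langle B\rangle)$ (after rescaling so that the intersection is nonzero, which is where having $1$ in both spans is used). One checks that the product space $\mathrm{k}\langle A\rangle\,\mathrm{k}\langle B\rangle$ contains both $(\mathrm{k}\langle A\rangle+a\,\mathrm{k}\langle B\rangle)$ and is unchanged in a suitable sense, that the sum of the two new dimensions equals $\dim_{\mathrm{k}}(A)+\dim_{\mathrm{k}}(B)$ (dimension formula for sum and intersection), and that the new "$B$" has strictly smaller dimension, so the induction can proceed — unless the process stabilizes, i.e. $a\,\mathrm{k}\langle B\rangle\subseteq \mathrm{k}\langle A\rangle$ for every invertible $a\in\mathrm{k}\langle A\rangle$. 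Since $\mathrm{k}\langle A\rangle$ has a basis of invertibles, stabilization forces $\mathrm{k}\langle A\rangle\,\mathrm{k}\langle B\rangle\subseteq\mathrm{k}\langle A\rangle$, hence (after the normalization $1\in\mathrm{k}\langle B\rangle$) $\mathrm{k}\langle A\rangle=\mathrm{k}\langle AB\rangle$ and $\mathrm{k}\langle A\rangle$ is stable under multiplication by $\mathrm{k}\langle B\rangle$; in that terminal case $\mathrm{k}\langle B\rangle\subseteq\mathcal{H}_r(A)$-type considerations together with Lemma~\ref{lem_stab} (applied inside the finite-dimensional algebra $\mathbb{A}(A\cup B)$, which one must first cut down to be finite-dimensional — here again the finiteness of the subalgebra lattice of $\mathbb{A}(A)$ is essential) show $\mathrm{k}\langle A\rangle$ is a subalgebra, so $\mathcal{H}_l(AB)\supseteq\mathrm{k}\langle A\rangle$ and $\dim(\mathcal{H})\geq\dim_{\mathrm{k}}(A)\geq \dim_{\mathrm{k}}(A)+\dim_{\mathrm{k}}(B)-\dim_{\mathrm{k}}(B)$; since also $\dim(\mathcal{H})\geq\dim_{\mathrm{k}}(B)$ in this case, the bound $\dim_{\mathrm{k}}(AB)=\dim_{\mathrm{k}}(A)\geq \dim_{\mathrm{k}}(A)+\dim_{\mathrm{k}}(B)-\dim(\mathcal{H})$ holds. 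The "not left periodic" consequence is then immediate since $\mathcal{H}=\mathrm{k}$ gives $\dim(\mathcal{H})=1$.

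The main obstacle I anticipate is controlling the induction so that the stabilizer $\mathcal{H}$ that finally appears is genuinely $\mathcal{H}_l(AB)$ and not merely some intermediate stabilizer: each $e$-transform changes the sets, and one has to verify that left-periodicity is inherited correctly (this is exactly the monotonicity $\mathcal{H}_l(A)\subseteq\mathcal{H}_l(AB)$ noted in the text) and that the finite-subalgebra hypothesis on $\mathbb{A}(A)$ is preserved under the transforms — it is, since the new "$A$"-span still lies in $\mathbb{A}(A\cup B)$, but one must be careful because $\mathbb{A}(A\cup B)$ need not be commutative. The resolution is to run the whole induction inside the fixed finite-dimensional commutative algebra $\mathcal{B}$ (or a finite-dimensional subalgebra containing $A$, $B$ and $1$, obtained via the Remark after Theorem~\ref{TH_classification}), where Theorem~\ref{TH_classification} pins down the possible stabilizer subalgebras and their dimensions, and where the Kneser-type inductive bookkeeping from \cite{Die} goes through verbatim after replacing "cardinality of a subset" by "dimension of a subspace" and "subgroup generated" by "subalgebra generated." Part (2), for $\mathcal{A}$ commutative, follows by the standard trick of replacing $A$ by $\mathcal{H}A$ and $B$ by $\mathcal{H}B$: these have the same product $\mathcal{H}A\cdot\mathcal{H}B$ spanning $\mathrm{k}\langle AB\rangle$ (as $\mathcal{H}$ stabilizes it and $1\in\mathcal{H}$), are stable under $\mathcal{H}$, and then applying part (1) to the pair $(\mathcal{H}A,\mathcal{H}B)$ — whose stabilizer still contains $\mathcal{H}$ — yields the sharper inequality, with the second inequality of (2) being the trivial $\dim_{\mathrm{k}}(\mathcal{H}A)\geq\dim_{\mathrm{k}}(A)$.
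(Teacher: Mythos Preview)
Your reduction from \textbf{H}$_{\mathrm{w}}$ to \textbf{H}$_{\mathrm{s}}$ and your derivation of Part~(2) from Part~(1) are correct and match the paper. In Part~(1) there are two problems. First, your $e$-transform is set up backwards: with $a\in\mathrm{k}\langle A\rangle$ and new pair $(\mathrm{k}\langle A\rangle+a\,\mathrm{k}\langle B\rangle,\ \mathrm{k}\langle A\rangle\cap a\,\mathrm{k}\langle B\rangle)$, the new first factor is no longer commutative and the new product need not lie in a translate of $\mathrm{k}\langle AB\rangle$ (a term $(ab')\cdot w$ with $b'\in\mathrm{k}\langle B\rangle$, $w\in\mathrm{k}\langle A\rangle$ lands in $a\,\mathrm{k}\langle BA\rangle$, not $a\,\mathrm{k}\langle AB\rangle$). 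The paper instead chooses $e\in\mathrm{k}\langle B\rangle\cap U(\mathcal{A})$ and sets $\mathrm{k}\langle A(e)\rangle=\mathrm{k}\langle A\rangle\cap\mathrm{k}\langle B\rangle e^{-1}$ (so $A(e)\subseteq\mathrm{k}\langle A\rangle$ remains commutative) and $\mathrm{k}\langle B(e)\rangle=\mathrm{k}\langle B\rangle+\mathrm{k}\langle A\rangle e$, inducting on $\dim_{\mathrm{k}}(A)$. Your terminal-case analysis is also off: from $\mathrm{k}\langle A\rangle\,\mathrm{k}\langle B\rangle\subseteq\mathrm{k}\langle A\rangle$ you cannot invoke Lemma~\ref{lem_stab} to make $\mathrm{k}\langle A\rangle$ a subalgebra, since that lemma needs $\mathrm{k}\langle A^{2}\rangle=\mathrm{k}\langle A\rangle$; the paper's terminal case instead yields $\mathrm{k}\langle AB\rangle=\mathrm{k}\langle B\rangle$ with stabilizing subalgebra $\mathbb{A}(A)$.

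The deeper gap is exactly the obstacle you anticipate: $\mathrm{k}\langle A(e)B(e)\rangle$ may be a strict subspace of $\mathrm{k}\langle AB\rangle$, so its left stabilizer bears no relation to $\mathcal{H}_{l}(AB)$, and direct induction on the theorem statement cannot close. Your proposed fix---run the induction inside a fixed finite-dimensional commutative subalgebra containing $A$, $B$, $1$---is not available: $B$ need not lie in any commutative subalgebra (only $A$ is assumed commutative), and the Remark after Theorem~\ref{TH_classification} concerns only commutative algebras. The paper's resolution is different and is the idea missing from your plan. The induction proves only the auxiliary Lemma~\ref{Lem_diff}: for \emph{each} invertible $a\in\mathrm{k}\langle A\rangle$ there exist a finite-dimensional subalgebra $\mathcal{A}_{a}\subseteq\mathbb{A}(A)$ and a space $V_{a}$ with $a\,\mathrm{k}\langle B\rangle\subseteq V_{a}\subseteq\mathrm{k}\langle AB\rangle$, $\mathcal{A}_{a}V_{a}=V_{a}$ and $\dim V_{a}+\dim\mathcal{A}_{a}\geq\dim A+\dim B$. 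The finite-subalgebra hypothesis on $\mathbb{A}(A)$ is then used \emph{after} the induction, via a pigeonhole argument on the Vandermonde family of Lemma~\ref{Lemma_VDM}: among the infinitely many invertible $x_{\alpha}=x_{1}+\alpha x_{2}+\cdots+\alpha^{n-1}x_{n}\in\mathrm{k}\langle A\rangle$ one finds $n=\dim_{\mathrm{k}}(A)$ values of $\alpha$ sharing a common $\mathcal{A}_{x_{\alpha}}=\mathcal{B}$ while the corresponding $x_{\alpha}$ form a basis of $\mathrm{k}\langle A\rangle$. Then $\mathrm{k}\langle AB\rangle=\sum_{i} V_{x_{\alpha_{i}}}$ is $\mathcal{B}$-stable, so $\mathcal{B}\subseteq\mathcal{H}$, and the inequality follows.
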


\smallskip To prove this theorem we need the following preparatory lemma.

\begin{lemma}
\label{Lem_diff}Assume $\mathcal{A}$ satisfies \textbf{H}$_{\mathrm{s}}$. Let
$A$ and $B$ be two finite subsets of $\mathcal{A}_{\ast}$ such that $A$ is
commutative, $\mathrm{k}\langle A\rangle\cap U(\mathcal{A})\neq\emptyset$ and
$\mathrm{k}\langle B\rangle\cap U(\mathcal{A})\neq\emptyset$. Then, for each
$a\in\mathrm{k}\langle A\rangle\cap U(\mathcal{A})$, there exists a
(commutative) finite-dimensional subalgebra $\mathcal{A}_{a}$ of $\mathcal{A}$
such that $\mathrm{k}\subseteq\mathcal{A}_{a}\subseteq\mathbb{A}(A)$ and a
vector space $V_{a}$ contained in $\mathrm{k}\langle AB\rangle$ such that
$V_{a}\cap U(\mathcal{A})\neq\emptyset$, $\mathcal{A}_{a}V_{a}=V_{a},$
$\mathrm{k}\langle aB\rangle\subseteq V_{a}$ and
\[
\dim_{\mathrm{k}}(V_{a})+\dim_{\mathrm{k}}(\mathcal{A}_{a})\geq\dim
_{\mathrm{k}}(A)+\dim_{\mathrm{k}}(B).
\]

\end{lemma}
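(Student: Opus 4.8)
The plan is to build the pair $(\mathcal A_a,V_a)$ by a linear analogue of the Dyson--Kneser transform, carried out one basis direction at a time, and to argue by induction on $d:=\dim_{\mathrm k}(A)$, systematically replacing spanning sets by bases of invertible elements (available by Proposition~\ref{Prop_base_inv} and Lemma~\ref{Lemma_VDM}). The base case $d=1$ is immediate: then $\mathrm k\langle A\rangle=\mathrm k a$, so $\mathrm k\langle AB\rangle=a\,\mathrm k\langle B\rangle=\mathrm k\langle aB\rangle$, and one takes $\mathcal A_a=\mathrm k$, $V_a=\mathrm k\langle aB\rangle$, for which $\dim_{\mathrm k}(V_a)+\dim_{\mathrm k}(\mathcal A_a)=\dim_{\mathrm k}(B)+1=\dim_{\mathrm k}(A)+\dim_{\mathrm k}(B)$. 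For $d\ge 2$, choose a basis $a=b_1,\dots,b_d$ of $\mathrm k\langle A\rangle$ consisting of invertible elements and set $A'=\{b_1,\dots,b_{d-1}\}$, a commutative set with $\mathbb A(A')\subseteq\mathbb A(A)$ and $a\in\mathrm k\langle A'\rangle\cap U(\mathcal A)$. The induction hypothesis applied to $(A',B)$ and the same $a$ furnishes a finite-dimensional $\mathcal A'_a$ with $\mathrm k\subseteq\mathcal A'_a\subseteq\mathbb A(A')$, and a subspace $V'_a\subseteq\mathrm k\langle A'B\rangle\subseteq\mathrm k\langle AB\rangle$ with $V'_a\cap U(\mathcal A)\neq\emptyset$, $\mathcal A'_aV'_a=V'_a$, $\mathrm k\langle aB\rangle\subseteq V'_a$, and $\dim_{\mathrm k}(V'_a)+\dim_{\mathrm k}(\mathcal A'_a)\ge(d-1)+\dim_{\mathrm k}(B)$. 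Since $\mathrm k\langle AB\rangle=\mathrm k\langle A'B\rangle+b_d\,\mathrm k\langle B\rangle$, it remains to incorporate the direction $b_d$ and gain one unit in the dimension count.

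The argument then splits according to whether $b_d\,\mathrm k\langle B\rangle$ escapes the module $V'_a$ or is swallowed by it. In the escaping case one enlarges $V_a$, adjoining to $V'_a$ a suitable $\mathcal A'_a$-submodule supported on $b_d\,\mathrm k\langle B\rangle$, chosen (using that $b_d$ commutes with $\mathbb A(A)$, and the freedom in the basis from Lemma~\ref{Lemma_VDM}) so that it still lies inside $\mathrm k\langle AB\rangle$; here $\dim_{\mathrm k}(V_a)$ strictly grows while $\dim_{\mathrm k}(\mathcal A_a)$ does not drop, so $\mathcal A_a=\mathcal A'_a$ suffices. In the swallowed case one already has $V'_a=\mathrm k\langle AB\rangle$, hence $\mathcal A'_a\subseteq\mathcal H=\mathcal H_l(AB)$, and one must instead enlarge the algebra. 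This is where the hypothesis $\mathrm k\langle B\rangle\cap U(\mathcal A)\neq\emptyset$ is essential: it forces $\mathrm{ann}_l(\mathrm k\langle B\rangle)=\{0\}$ and $\mathrm k\langle AB\rangle\cap U(\mathcal A)\neq\emptyset$, so that any element of $\mathcal A$ stabilizing $\mathrm k\langle B\rangle$ has the same minimal polynomial as its restriction to the finite-dimensional space $\mathrm k\langle B\rangle$, hence is algebraic over $\mathrm k$ and generates a finite-dimensional subalgebra (the mechanism underlying Lemmas~\ref{Lem_finite} and~\ref{lem_stab}). The inclusion $b_d\,\mathrm k\langle B\rangle\subseteq V'_a$ says that $b_d b_1^{-1}$ stabilizes $\mathrm k\langle B\rangle$; clearing denominators against its minimal polynomial turns this into a homogeneous polynomial identity among $b_1,\dots,b_d$, from which one extracts a non-scalar element of $\mathbb A(A)\cap\mathcal H$, and therefore a finite-dimensional subalgebra $\mathcal A_a\subseteq\mathbb A(A)$ with $\dim_{\mathrm k}(\mathcal A_a)>\dim_{\mathrm k}(\mathcal A'_a)$, keeping $V_a=V'_a=\mathrm k\langle AB\rangle$. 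Either way the count increases by one, completing the induction.

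The main obstacle is the last step: the obvious stabilizing element $b_d b_1^{-1}$ need not itself belong to $\mathbb A(A)$ (already when $\mathbb A(A)\cong\mathrm k[t]$, as can happen under \textbf{H}$_{\mathrm s}$), so one cannot naively set $\mathcal A_a=\mathrm k[b_d b_1^{-1}]$; one must use its algebraicity — which is exactly what the vanishing of $\mathrm{ann}_l(\mathrm k\langle B\rangle)$ provides — to rewrite the stabilizing relation with elements of $\mathbb A(A)$ only, and then check that the subalgebra it generates still stabilizes $\mathrm k\langle AB\rangle$ and genuinely increases $\dim_{\mathrm k}(\mathcal A_a)$. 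The accompanying bookkeeping — ensuring that at every stage $V_a\subseteq\mathrm k\langle AB\rangle$, the module identity $\mathcal A_aV_a=V_a$ holds, and $V_a$ contains an invertible element — is what makes the passage to bases of invertible elements (Proposition~\ref{Prop_base_inv}, Lemma~\ref{Lemma_VDM}) and the subalgebra criterion of Lemma~\ref{lem_stab} indispensable.
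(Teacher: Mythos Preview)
Your inductive scheme --- peel off one basis vector $b_d$ from $A$, apply the hypothesis to $A'=\{b_1,\dots,b_{d-1}\}$, then recover the missing unit --- has a genuine gap in both branches of the case split.

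In the ``swallowed'' case you assert that $b_d\,\mathrm k\langle B\rangle\subseteq V'_a$ forces $V'_a=\mathrm k\langle AB\rangle$ and that $b_db_1^{-1}$ stabilises $\mathrm k\langle B\rangle$. Neither follows: the induction only gives $a\,\mathrm k\langle B\rangle\subseteq V'_a\subseteq\mathrm k\langle A'B\rangle$, so $V'_a$ may sit strictly between $a\,\mathrm k\langle B\rangle$ and $\mathrm k\langle A'B\rangle$, and the inclusion $b_d\,\mathrm k\langle B\rangle\subseteq V'_a$ says nothing about $b_d\,\mathrm k\langle B\rangle\subseteq a\,\mathrm k\langle B\rangle$. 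In the ``escaping'' case you want an $\mathcal A'_a$-submodule $M\subseteq\mathrm k\langle AB\rangle$ with $M\not\subseteq V'_a$; the natural candidate is $\mathcal A'_a\,b_d\,\mathrm k\langle B\rangle=b_d\,\mathcal A'_a\,\mathrm k\langle B\rangle$, but from $\mathcal A'_aV'_a=V'_a$ you only get $\mathcal A'_a\,\mathrm k\langle B\rangle\subseteq a^{-1}V'_a\subseteq a^{-1}\mathrm k\langle A'B\rangle$, and $b_da^{-1}\mathrm k\langle A'B\rangle$ has no reason to lie inside $\mathrm k\langle AB\rangle$ (note $b_da^{-1}$ need not belong to $\mathrm k\langle A\rangle$, nor even to $\mathbb A(A)$). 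The ``freedom in the basis from Lemma~\ref{Lemma_VDM}'' does not repair this: varying $b_d$ within $\mathrm k\langle A\rangle$ does not change $\mathrm k\langle AB\rangle$.

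The paper avoids this deficit-chasing altogether by running the Dyson transform on \emph{both} sides simultaneously. After normalising to $1\in A\cap B$, for each invertible $e\in\mathrm k\langle B\rangle$ one sets
\[
\mathrm k\langle A(e)\rangle=\mathrm k\langle A\rangle\cap\mathrm k\langle B\rangle e^{-1},\qquad
\mathrm k\langle B(e)\rangle=\mathrm k\langle B\rangle+\mathrm k\langle A\rangle e.
\]
The commutativity of $A$ is exactly what makes $\mathrm k\langle A(e)B(e)\rangle\subseteq\mathrm k\langle AB\rangle$, and one has the exact identity $\dim A(e)+\dim B(e)=\dim A+\dim B$, so no unit is ever lost. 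Either some $e$ strictly shrinks $\mathrm k\langle A(e)\rangle$ and one recurses, or every invertible $e$ satisfies $\mathrm k\langle A\rangle e\subseteq\mathrm k\langle B\rangle$, whence (via Proposition~\ref{Prop_base_inv}) $\mathrm k\langle AB\rangle=\mathrm k\langle B\rangle$ and one takes $V_1=\mathrm k\langle B\rangle$, $\mathcal A_1=\mathbb A(A)$ --- finite-dimensional because $1\in\mathrm k\langle B\rangle$ and $\mathbb A(A)\,\mathrm k\langle B\rangle=\mathrm k\langle B\rangle$. The point is that the transform trades dimension between the two factors rather than discarding it.
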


\begin{proof}
The hypothesis on a $\mathcal{A}$ ensures that each subspace of $\mathcal{A}$
containing an invertible element admits a basis of invertible elements
(see~Proposition~\ref{Prop_base_inv}).

By replacing $A$ by $A^{\prime}=a^{-1}A$ with $a\in\mathrm{k}\langle
A\rangle\cap U(\mathcal{A})\neq\emptyset$, we can establish the lemma only for
$a=1$. Indeed, if there exist a subalgebra $\mathcal{B}\subseteq
\mathbb{A}(A^{\prime})$ and a vector space $V\neq\{0\}$ contained in
$\mathrm{k}\langle A^{\prime}B\rangle$ such that $\mathcal{B}V=V$ and
$\mathrm{k}\langle B\rangle\subseteq V$ with
\[
\dim_{\mathrm{k}}(V)+\dim_{\mathrm{k}}(\mathcal{B})\geq\dim_{\mathrm{k}%
}(A^{\prime})+\dim_{\mathrm{k}}(B),
\]
it suffices to take $V_{a}=aV$ and $\mathcal{A}_{a}=\mathcal{B}\subseteq
\mathbb{A}(A^{\prime})\subseteq\mathbb{A}(A)$.\ Since $\mathcal{B}%
\subseteq\mathbb{A}(A),$ we must have $\mathcal{B}a=a\mathcal{B}$ for any
$a\in A$ and $\mathcal{B}(V_{a})=\mathcal{B}(aV)=a\mathcal{B}V=aV=V_{a}$.
Moreover $\mathrm{k}\langle aB\rangle=a\mathrm{k}\langle B\rangle\subseteq
aV=V_{a}$ and $\dim_{\mathrm{k}}(V_{a})+\dim_{\mathrm{k}}(\mathcal{A}_{a}%
)\geq\dim_{\mathrm{k}}(A)+\dim_{\mathrm{k}}(B)$ because $\dim_{\mathrm{k}%
}(V_{a})=\dim_{\mathrm{k}}(V)$ and $\mathcal{A}_{a}=\mathcal{B}$.

We can also assume that $1\in B$ by replacing $B$ by $B^{\prime}=Bb^{-1}$ with
$b\in\mathrm{k}\langle B\rangle\cap U(\mathcal{A})\neq\emptyset$. Indeed, if
there exist a subalgebra $\mathcal{B}^{\prime}\subseteq\mathbb{A}(A)$ and a
vector space $V^{\prime}\neq\{0\}$ contained in $\mathrm{k}\langle AB^{\prime
}\rangle$ such that $\mathcal{B}^{\prime}V^{\prime}=V^{\prime}$ and
$\mathrm{k}\langle B^{\prime}\rangle\subseteq V^{\prime}$ with
\[
\dim_{\mathrm{k}}(V^{\prime})+\dim_{\mathrm{k}}(\mathcal{B}^{\prime})\geq
\dim_{\mathrm{k}}(A)+\dim_{\mathrm{k}}(B^{\prime}),
\]
it suffices to take $V=V^{\prime}b$ and $\mathcal{A}_{a}=\mathcal{B}^{\prime}%
$.\ We will have then $V=V^{\prime}b\subseteq\mathrm{k}\langle AB^{\prime
}\rangle b=\mathrm{k}\langle AB\rangle,$ $\mathcal{A}_{a}V=\mathcal{B}%
^{\prime}(V^{\prime}b)=(\mathcal{B}^{\prime}V^{\prime})b=V^{\prime}b=V,$
$\mathrm{k}\langle B\rangle=\mathrm{k}\langle B^{\prime}\rangle b\subseteq
V^{\prime}b=V$ and
\[
\dim_{\mathrm{k}}(V)+\dim_{\mathrm{k}}(\mathcal{A}_{a})\geq\dim_{\mathrm{k}%
}(A)+\dim_{\mathrm{k}}(B)
\]
since $\dim_{\mathrm{k}}(B)=\dim_{\mathrm{k}}(B^{\prime})$ and $\dim
_{\mathrm{k}}(V)=\dim_{\mathrm{k}}(V^{\prime})$.

We thus assume in the remainder of the proof that $1\in A\cap B$ and proceed
by induction on $\dim_{\mathrm{k}}(A).$ When dim$_{\mathrm{k}}(A)=1$, we have
$\mathrm{k}\langle A\rangle=\mathrm{k}=\mathbb{A}(A)$. It suffices to take
$V_{1}=V=\mathrm{k}\langle B\rangle$ (with $1\in B$) and $\mathcal{A}%
_{1}=\mathrm{k}=\mathbb{A}(A)$. Assume $\dim_{\mathrm{k}}(A)>1$.\ Given
$e\in\mathrm{k}\langle B\rangle\cap U(\mathcal{A})$, define $A(e)$ and $B(e)$
to be finite subsets of $\mathcal{A}_{\ast}$ such that
\[
\mathrm{k}\langle A(e)\rangle=\mathrm{k}\langle A\rangle\cap\mathrm{k}\langle
B\rangle e^{-1}\quad\text{ and }\quad\mathrm{k}\langle B(e)\rangle
=\mathrm{k}\langle B\rangle+\mathrm{k}\langle A\rangle e.
\]
Observe that $\mathrm{k}\langle A(e)\rangle$ and $\mathrm{k}\langle
B(e)\rangle$ contain $\mathrm{k}$ since $1\in A\cap B$. Thus we may and do
assume that $1\in A(e)\cap B(e)$.\ Moreover, $\mathrm{k}\langle A(e)\rangle
\mathrm{k}\langle B(e)\rangle$ is contained in $\mathrm{k}\langle AB\rangle$.
Indeed, for $v\in\mathrm{k}\langle A\rangle\cap\mathrm{k}\langle B\rangle
e^{-1}$ and $w\in\mathrm{k}\langle B\rangle,$ we have $vw\in\mathrm{k}\langle
A\rangle\mathrm{k}\langle B\rangle\subseteq\mathrm{k}\langle AB\rangle$
because $v\in\mathrm{k}\langle A\rangle$. Set $v=ze^{-1}$ with $z\in
\mathrm{k}\langle B\rangle$. If $w\in\mathrm{k}\langle A\rangle e,$ we have
$vw\in ze^{-1}\mathrm{k}\langle A\rangle e$. But $ze^{-1}\in\mathrm{k}\langle
A\rangle$ and $A$ is commutative. Therefore, $vw\in\mathrm{k}\langle A\rangle
ze^{-1}e=\mathrm{k}\langle A\rangle z\subseteq\mathrm{k}\langle A\rangle
\mathrm{k}\langle B\rangle\subseteq\mathrm{k}\langle AB\rangle$.\ In
particular, $\dim_{\mathrm{k}}(A(e)B(e))\leq\dim_{\mathrm{k}}(AB)$.\ We get%
\begin{multline*}
\dim_{\mathrm{k}}(A(e))+\dim_{\mathrm{k}}(B(e))=\dim_{\mathrm{k}}%
(\mathrm{k}\langle A\rangle\cap\mathrm{k}\langle B\rangle e^{-1}%
)+\dim_{\mathrm{k}}(\mathrm{k}\langle B\rangle+\mathrm{k}\langle A\rangle
e)=\\
\dim_{\mathrm{k}}(\mathrm{k}\langle A\rangle e\cap\mathrm{k}\langle
B\rangle)+\dim_{\mathrm{k}}(\mathrm{k}\langle B\rangle+\mathrm{k}\langle
A\rangle e)=\dim_{\mathrm{k}}(Ae)+\dim_{\mathrm{k}}(B)=\dim_{\mathrm{k}%
}(A)+\dim_{\mathrm{k}}(B).
\end{multline*}
Also $A(e)\subseteq\mathrm{k}\langle A\rangle$.\ 

Assume $\mathrm{k}\langle A(e)\rangle=\mathrm{k}\langle A\rangle$ for any
$e\in\mathrm{k}\langle B\rangle\cap U(\mathcal{A})$.\ Then $\mathrm{k}\langle
A\rangle e\subseteq\mathrm{k}\langle B\rangle$ for any $e\in\mathrm{k}\langle
B\rangle\cap U(\mathcal{A})$. Thus $\mathrm{k}\langle AB\rangle\subseteq
\mathrm{k}\langle B\rangle$ by Proposition \ref{Prop_base_inv}.\ Indeed
$\mathrm{k}\langle B\rangle$ admits a basis contained in $U(\mathcal{A})$ and
the products $xy$ with $x\in A$ and $y\in\mathrm{k}\langle B\rangle\cap
U(\mathcal{A})$ generate $\mathrm{k}\langle AB\rangle$.\ Since $1\in A$, we
have in fact $\mathrm{k}\langle AB\rangle=\mathrm{k}\langle B\rangle$.\ The
subalgebra $\mathcal{A}_{1}=\mathbb{A}(A)$ is commutative. Take $V_{1}%
=\mathrm{k}\langle B\rangle$ (with $1\in B$). Then $\mathcal{A}_{1}V_{1}%
=V_{1}$ since $\mathrm{k}\langle AB\rangle\subseteq\mathrm{k}\langle B\rangle
$.\ In particular, $\mathcal{A}_{1}$ is finite-dimensional since $1 \in V_{1}%
$.\ We clearly have $V_{1}=\mathrm{k}\langle AB\rangle$ and $\mathrm{k}\langle
B\rangle\subseteq\mathrm{k}\langle AB\rangle=V_{1}$ as desired. We also get%
\[
\dim_{\mathrm{k}}(V_{1})+\dim_{\mathrm{k}}(\mathcal{A}_{1})\geq\dim
_{\mathrm{k}}(A)+\dim_{\mathrm{k}}(B).
\]

Now assume $\mathrm{k}\langle A(e)\rangle\neq\mathrm{k}\langle A\rangle$ for
at least one $e\in\mathrm{k}\langle B\rangle\cap U(\mathcal{A})$.\ Then
$0<\dim_{\mathrm{k}}(A(e))<\dim_{\mathrm{k}}(A)$ and $1\in A(e)\cap B(e)$.\ By
our induction hypothesis, there exists a finite-dimensional subalgebra
$\mathcal{A}_{1}$ of $\mathbb{A}(A(e))\subseteq\mathbb{A}(A)$ and a nonzero
$\mathrm{k}$-vector space $V_{1}\subseteq\mathrm{k}\langle A(e)B(e)\rangle
\subseteq\mathrm{k}\langle AB\rangle$, such that $V_{1}\cap U(\mathcal{A}%
)\neq\emptyset,\mathcal{A}_{1}V_{1}=V_{1}$ and $\mathrm{k}\langle
B\rangle\subseteq\mathrm{k}\langle B(e)\rangle\subseteq V_{1}$ with
\[
\dim_{\mathrm{k}}(V_{1})+\dim_{\mathrm{k}}(\mathcal{A}_{1})\geq\dim
_{\mathrm{k}}(A(e))+\dim_{\mathrm{k}}(B(e))=\dim_{\mathrm{k}}(A)+\dim
_{\mathrm{k}}(B).
\]
The subalgebra $\mathcal{A}_{1}\subseteq\mathbb{A}(A)$ and the nonzero space
$V_{1}\supset\mathrm{k}\langle B\rangle$ satisfy the statement of the lemma
for the pair of subsets $A$ and $B$ which concludes the proof.
\end{proof}

\medskip

We are now ready to prove \ref{Th_KL_Acom}.

\begin{proof}
(of Theorem \ref{Th_KL_Acom})

We first remark that if $\mathcal{A}$ is a subalgebra of an algebra
$\mathcal{B}$ then the stabilizer of $k\langle AB \rangle$ in $\mathcal{B}$ is
the stabilizer of $k\langle AB \rangle$ in $\mathcal{A}$. Indeed, for $a \in A
\cap U(\mathcal{A})$, $b \in B \cap U(\mathcal{A})$ and $x$ in the stabilizer
of $k\langle AB \rangle$ in $\mathcal{B}$, we have $xab \in k\langle AB
\rangle$ and so $x \in k\langle AB \rangle b^{-1}a^{-1} \subset\mathcal{A}$.

Since $A$ stays commutative in $\mathcal{B}$ and $\mathbb{A}(A)\subset
\mathcal{A}\subset\mathcal{B}$ has also a finite number of finite dimensional
subalgebras, it suffices to prove our theorem when $\mathcal{A}$ satisfies
\textbf{H}$_{\mathrm{s}}$.

1: Let $\{x_{1},\ldots,x_{n}\}$ be a basis of $\mathrm{k}\langle A\rangle$
with $x_{1}$ invertible.\ For any $\alpha\in\mathrm{k}$, set $x_{\alpha}%
=x_{1}+\alpha x_{2}+\cdots+\alpha^{n-1}x_{n}$.\ Assume $x_{\alpha}$ is
invertible. Since $\mathrm{k}$ is infinite and by Lemma \ref{Lem_diff}, there
exists a finite-dimensional subalgebra $\mathcal{A}_{\alpha}$ such that
$\mathrm{k}\subseteq\mathcal{A}_{\alpha}\subseteq\mathbb{A}(A)\subseteq
\mathcal{A}$ and a $\mathrm{k}$-vector space $V_{\alpha}\subseteq
\mathrm{k}\langle AB\rangle$ with $x_{\alpha}B\subseteq V_{\alpha},$
$\mathcal{A}_{\alpha}V_{\alpha}=V_{\alpha},$ $V_{\alpha}\cap U(\mathcal{A}%
)\neq\emptyset$ and
\begin{equation}
\dim_{\mathrm{k}}(V_{\alpha})+\dim_{\mathrm{k}}(\mathcal{A}_{\alpha})\geq
\dim_{\mathrm{k}}(A)+\dim_{\mathrm{k}}(B). \label{ineqK}%
\end{equation}
Since
%$V_{\alpha}\neq\{0\}$ and $\mathcal{A}_{\alpha}$ stabilizes $V_{\alpha}\subseteq\mathrm{k}\langle AB\rangle$,
%there exists an invertible element $v\in V_{\alpha}$ such that $\mathcal{A}_{\alpha}v\subseteq
%\mathrm{k}\langle AB\rangle$.\ Hence $\mathcal{A}_{\alpha}\subseteq \mathrm{k}\langle AB\rangle v^{-1}$.
$\mathcal{A}_{\alpha}\subseteq\mathbb{A}(A)$ and $\mathbb{A}(A)$ is
commutative and admits a finite number of finite-dimensional subalgebras
containing $\mathrm{k,}$ there should exist by Lemma \ref{Lemma_VDM} $n$
distinct scalars $\alpha_{1},\ldots,\alpha_{n}$ in $\mathrm{k}$ such that
\[
\mathcal{A}_{\alpha_{1}}=\mathcal{A}_{\alpha_{2}}=\cdots=\mathcal{A}%
_{\alpha_{n}}=\mathcal{B}%
\]
and $x_{\alpha_{1}},\ldots,x_{\alpha_{n}}$ form a basis of invertible elements
of $\mathrm{k}\langle A\rangle$ over $\mathrm{k}$.\ We thus have
$\mathrm{k}\langle AB\rangle=\sum_{i=1}^{n}x_{\alpha_{i}}\mathrm{k}\langle
B\rangle\subseteq\sum_{i=1}^{n}V_{\alpha_{i}}$ since $x_{\alpha_{i}}%
\mathrm{k}\langle B\rangle\subseteq V_{\alpha_{i}}$ for any $i=1,\ldots
,n$.\ On the other hand, $V_{\alpha_{i}}\subseteq\mathrm{k}\langle AB\rangle$
for any $i=1,\ldots,n$. Hence $\mathrm{k}\langle AB\rangle=\sum_{i=1}%
^{n}V_{\alpha_{i}}$ and
\[
\mathcal{B}\mathrm{k}\langle AB\rangle=\mathcal{B}\sum_{i=1}^{n}V_{\alpha_{i}%
}=\sum_{i=1}^{n}\mathcal{A}_{\alpha_{i}}V_{\alpha_{i}}=\sum_{i=1}^{n}%
V_{\alpha_{i}}=\mathrm{k}\langle AB\rangle.
\]
So $\mathcal{B}\subset\mathcal{H}$. Moreover
\[
\dim_{\mathrm{k}}(AB)+\dim_{\mathrm{k}}(\mathcal{H})\geq\dim_{\mathrm{k}%
}(AB)+\dim_{\mathrm{k}}(\mathcal{B})\geq\dim_{\mathrm{k}}(V_{\alpha_{1}}%
)+\dim_{\mathrm{k}}(\mathcal{A}_{\alpha_{1}})\geq\dim_{\mathrm{k}}%
(A)+\dim_{\mathrm{k}}(B)\label{ineqfund}%
\]
by (\ref{ineqK}) and because $V_{\alpha_{1}}\subset\mathrm{k}\langle
AB\rangle$.
%When $\mathrm{k}=\mathcal{B}$ and we get
%\[
%\dim_{\mathrm{k}}(AB)\geq\dim_{\mathrm{k}}(A)+\dim_{\mathrm{k}}(B)-1
%\]
%as desired. For the proof of the second assertion, let us also remark that
%\[
%\mathcal{B}\mathrm{k}\langle AB\rangle=\mathcal{B}\sum_{i=1}^{n}V_{\alpha_{i}%
%}=\sum_{i=1}^{n}\mathcal{A}_{\alpha_{i}}V_{\alpha_{i}}=\sum_{i=1}^{n}%
%V_{\alpha_{i}}=\mathrm{k}\langle AB\rangle.
%\]

2:
%If $\mathcal{H}=\mathrm{k}$ the assertion reduces to 1.\ Assume
%$\mathrm{k}\subsetneqq\mathcal{H}$.\
The space $\langle\mathcal{H}A\rangle$ contains $A$ and is finite-dimensional
because both $\mathcal{H}$ and $\mathrm{k}\langle AB\rangle$ are.\ Similarly,
$\langle\mathcal{H}B\rangle$ contains $B$ and is finite-dimensional.\ Let
$A^{\prime}$ and $B^{\prime}$ be finite sets such that $\langle\mathcal{H}%
A\rangle=\langle A^{\prime}\rangle,$ $\langle\mathcal{H}B\rangle=\langle
B^{\prime}\rangle$, $A\subset A^{\prime}$ and $B\subset B^{\prime}$. Observe
first that
\[
\langle A^{\prime}B^{\prime}\rangle=\langle\mathcal{H}A\mathcal{H}%
B\rangle=\langle\mathcal{H}AB\rangle=\langle AB\rangle
\]
for $\mathcal{A}$ is commutative and $\langle\mathcal{H}AB\rangle=\langle
AB\rangle$.\ We then get Assertion 2 applying Assertion 1 to $A^{\prime}$ and
$B^{\prime}$.
%We can thus proceed as in the proof of Assertion 1 from the sets
%$A^{\prime}$ and $B^{\prime}$ and get by (\ref{ineqfund}) a finite-dimensional
%algebra $\mathcal{B}^{\prime}$ such that
%\[
%\dim_{\mathrm{k}}(AB)+\dim_{\mathrm{k}}(\mathcal{B}^{\prime})=\dim
%_{\mathrm{k}}(A^{\prime}B^{\prime})+\dim_{\mathrm{k}}(\mathcal{B}^{\prime
%})\geq\dim_{\mathrm{k}}(\mathcal{H}A)+\dim_{\mathrm{k}}(\mathcal{H}B).
%\]
%Now we have $\mathcal{B}^{\prime}\mathrm{k}\langle A^{\prime}B^{\prime}%
%\rangle=\mathrm{k}\langle A^{\prime}B^{\prime}\rangle=\mathrm{k}\langle
%AB\rangle$ which implies that $\mathcal{B}^{\prime}\subset\mathcal{H}%
%.\ $Finally, we obtain
%\[
%\dim_{\mathrm{k}}(AB)+\dim_{\mathrm{k}}(\mathcal{H})\geq\dim_{\mathrm{k}%
%}(\mathcal{H}A)+\dim_{\mathrm{k}}(\mathcal{H}B).
%\]

\end{proof}

\begin{corollary}
\label{CorAB-1}Let $\mathcal{A}$ be a commutative Banach algebra with no non
trivial finite-dimensional subalgebra. Then for any finite subsets $A$ and $B$
such that $\mathrm{k}\langle A\rangle\cap U(\mathcal{A})\neq\emptyset$ and
$\mathrm{k}\langle B\rangle\cap U(\mathcal{A})\neq\emptyset,$ we have
\[
\dim_{\mathrm{k}}(AB)\geq\dim_{\mathrm{k}}(A)+\dim_{\mathrm{k}}(B)-1.
\]

\end{corollary}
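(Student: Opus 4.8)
The plan is to deduce Corollary \ref{CorAB-1} directly from Theorem \ref{Th_KL_Acom} by checking that its hypotheses are met and that the stabilizer subalgebra $\mathcal{H}$ is forced to be trivial. First I would observe that a commutative Banach algebra satisfies \textbf{H}$_{\mathrm{s}}$, hence also \textbf{H}$_{\mathrm{w}}$, so the theorem applies to any pair of finite nonempty subsets $A,B$ of $\mathcal{A}_{\ast}$ with $\mathrm{k}\langle A\rangle\cap U(\mathcal{A})\neq\emptyset$ and $\mathrm{k}\langle B\rangle\cap U(\mathcal{A})\neq\emptyset$. Since $\mathcal{A}$ is commutative, every subset $A$ is automatically commutative in the sense of the preceding discussion, so that hypothesis costs nothing.

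The one genuine point to verify is that $\mathbb{A}(A)$, the subalgebra of $\mathcal{A}$ generated by $A$, admits only finitely many finite-dimensional subalgebras — indeed, by hypothesis $\mathcal{A}$ has no nontrivial finite-dimensional subalgebra, so neither does $\mathbb{A}(A)$, meaning the only finite-dimensional subalgebra of $\mathbb{A}(A)$ is $\mathrm{k}$ itself. Thus the finiteness hypothesis of Theorem \ref{Th_KL_Acom} holds trivially, and part (1) of that theorem yields
\[
\dim_{\mathrm{k}}(AB)\geq\dim_{\mathrm{k}}(A)+\dim_{\mathrm{k}}(B)-\dim_{\mathrm{k}}(\mathcal{H}),
\]
where $\mathcal{H}=\mathcal{H}_{l}(AB)$.

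It remains to show $\dim_{\mathrm{k}}(\mathcal{H})=1$. By the Remark following the definition of the stabilizers, since $AB$ contains an invertible element (the product of an invertible element of $\mathrm{k}\langle A\rangle$ — or rather of $A$, after the reduction used in the theorem — with one of $B$; more carefully, $\mathrm{k}\langle AB\rangle$ contains an invertible element because $\mathrm{k}\langle A\rangle\cap U(\mathcal{A})$ and $\mathrm{k}\langle B\rangle\cap U(\mathcal{A})$ are nonempty and their product lies in $\mathrm{k}\langle AB\rangle$), the stabilizer $\mathcal{H}_{l}(AB)$ is a finite-dimensional subalgebra of $\mathcal{A}$. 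But $\mathcal{A}$ has no nontrivial finite-dimensional subalgebra, so $\mathcal{H}=\mathrm{k}$ and $\dim_{\mathrm{k}}(\mathcal{H})=1$. Substituting this into the inequality above gives exactly
\[
\dim_{\mathrm{k}}(AB)\geq\dim_{\mathrm{k}}(A)+\dim_{\mathrm{k}}(B)-1,
\]
as claimed. The only step requiring a little care is making sure $\mathrm{k}\langle AB\rangle$ genuinely contains an invertible element so that the Remark applies to conclude $\mathcal{H}$ is finite-dimensional; this follows since for $a\in\mathrm{k}\langle A\rangle\cap U(\mathcal{A})$ and $b\in\mathrm{k}\langle B\rangle\cap U(\mathcal{A})$ one has $ab\in\mathrm{k}\langle A\rangle\mathrm{k}\langle B\rangle\subseteq\mathrm{k}\langle AB\rangle$ and $ab\in U(\mathcal{A})$ by commutativity. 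Alternatively one may invoke directly the "not left periodic" clause of Theorem \ref{Th_KL_Acom}(1): since the only possibility for $\mathcal{H}_{l}(AB)$ other than $\mathrm{k}$ would be a larger finite-dimensional subalgebra, which does not exist, $\mathrm{k}\langle AB\rangle$ cannot be left periodic, and that clause gives the bound immediately.
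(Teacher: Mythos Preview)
Your argument is correct and is exactly the deduction the paper intends: the corollary is stated without proof precisely because, under the hypothesis that $\mathcal{A}$ has no nontrivial finite-dimensional subalgebra, the stabilizer $\mathcal{H}=\mathcal{H}_{l}(AB)$ (which is finite-dimensional by the Remark following the definition of stabilizers) must reduce to $\mathrm{k}$, so Theorem~\ref{Th_KL_Acom}(1) gives the bound with $\dim_{\mathrm{k}}(\mathcal{H})=1$. Your verification that $\mathbb{A}(A)$ inherits the ``finitely many finite-dimensional subalgebras'' property and that $\mathrm{k}\langle AB\rangle$ contains an invertible element are the right small checks to make the application airtight.
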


\begin{example}
The previous corollary applies in particular to the Banach algebra
$\mathcal{A}=\mathcal{C}_{0}(I)$ where $I$ is any compact interval in
$\mathbb{R}$ (or more generally $I$ is a compact and connected set) and
$\mathcal{C}_{0}(I)$ is the set of continuous functions $f:I\rightarrow
\mathbb{R}$.\ 
\end{example}

\begin{remark}
Assume $A$ and $B$ as in Theorem \ref{Th_KL_Acom} and $\dim_{\mathrm{k}%
}(A)+\dim_{\mathrm{k}}(B)>\dim_{\mathrm{k}}(\mathcal{A})$. Then, for any
invertible $x\in\mathcal{A}$, we get%
\[
\dim_{\mathrm{k}}(\mathrm{k}\langle AB\rangle\cap\mathcal{H}x)>0\text{.}%
\]
If $\mathcal{H}$ is a field (which is the case when $\mathcal{A}$ is a field),
this shows that $x\in\mathrm{k}\langle AB\rangle$ and we have in this case
$\mathrm{k}\langle AB\rangle=\mathcal{A}$.\ In the general case, $\mathcal{H}$
is not a field and we can have $\mathrm{k}\langle AB\rangle\subsetneqq
\mathcal{A}$. For example, consider $\mathcal{A}=\mathrm{k}^{n}$ with $n\geq3$
and $A=B$ the subalgebra of vectors whose last two coordinates are equal. In
this case, we have $AB=A=B$.
\end{remark}

With Theorem \ref{Th_KL_Acom} in hand, one can prove the following
generalization to arbitrary finite Minkowski products. The proof is similar to
that of Theorem 2.7 in \cite{Lec} so we omit it.

\begin{theorem}
\label{Th_Lin_Kne_n} Assume $\mathcal{A}$ is a commutative finite-dimensional
algebra, a commutative subalgebra of a Banach algebra or a subalgebra of a
product of field extensions over $\mathrm{k}$. Assume $\mathcal{A}$ contains
only a finite number of finite-dimensional subalgebras. Consider a collection
of finite subsets $A_{1},\ldots,A_{n}$ of $\mathcal{A}^{\ast}$ such that
$\mathrm{k}\langle A_{i}\rangle\cap U(\mathcal{A})\neq\emptyset$ for any
$i=1,\ldots,n$. Set $\mathcal{H}:=\mathcal{H}(A_{1}\cdots A_{n})$. The
following statements hold and are equivalent:

\begin{enumerate}
\item $\dim_{\mathrm{k}}(A_{1}\cdots A_{n})\geq\sum_{i=1}^{n}\dim_{\mathrm{k}%
}(A_{i}\mathcal{H})-(n-1)\dim_{\mathrm{k}}(\mathcal{H})$,

\item $\dim_{\mathrm{k}}(A_{1}\cdots A_{n})\geq\sum_{i=1}^{n}\dim_{\mathrm{k}%
}(A_{i})-(n-1)\dim_{\mathrm{k}}(\mathcal{H})$,

\item any one of the above two statements in the case $n=2$.
\end{enumerate}
\end{theorem}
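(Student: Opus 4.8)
The plan is to prove Theorem~\ref{Th_Lin_Kne_n} by induction on $n$, using Theorem~\ref{Th_KL_Acom} as the base and inductive engine, exactly along the lines of Theorem~2.7 in \cite{Lec}. First I would record the easy equivalences. The implication $(1)\Rightarrow(2)$ is immediate since $\dim_{\mathrm{k}}(A_i\mathcal{H})\geq\dim_{\mathrm{k}}(A_i)$ (because $1\in\mathcal{H}$, as $\mathcal{H}$ is a unital subalgebra, so $A_i\subseteq A_i\mathcal{H}$ after translating to contain $1$, or directly $\mathrm{k}\langle A_i\rangle\subseteq\mathrm{k}\langle A_i\mathcal{H}\rangle$). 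The implication $(2)\Rightarrow(3)$ and $(1)\Rightarrow(3)$ are trivial specializations to $n=2$. So the real content is $(3)\Rightarrow(1)$: assuming the $n=2$ statement, derive the general-$n$ statement~(1), and hence~(2).

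For the induction I would set $\mathcal{A}'=A_1\cdots A_{n-1}$ and first check that $\mathrm{k}\langle\mathcal{A}'\rangle\cap U(\mathcal{A})\neq\emptyset$: taking $a_i\in\mathrm{k}\langle A_i\rangle\cap U(\mathcal{A})$, the product $a_1\cdots a_{n-1}$ is invertible and lies in $\mathrm{k}\langle A_1\cdots A_{n-1}\rangle$. Apply the $n=2$ hypothesis to the pair $(A_1\cdots A_{n-1},\,A_n)$: writing $\mathcal{H}=\mathcal{H}(A_1\cdots A_n)$, this gives
\[
\dim_{\mathrm{k}}(A_1\cdots A_n)\geq\dim_{\mathrm{k}}\big((A_1\cdots A_{n-1})\mathcal{H}\big)+\dim_{\mathrm{k}}(A_n\mathcal{H})-\dim_{\mathrm{k}}(\mathcal{H}).
\]
Then I would like to apply the inductive hypothesis to the $(n-1)$ subsets $A_1,\ldots,A_{n-1}$, but with the \emph{same} stabilizer algebra $\mathcal{H}$ playing the role of the Hamidoune-type correction. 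The key structural point (the commutative analogue of the group-theoretic fact that stabilizers grow under products) is that $\mathcal{H}$ stabilizes $\mathrm{k}\langle A_1\cdots A_{n-1}\rangle$ as well: since $\mathcal{A}$ is commutative, $\mathcal{H}\,\mathrm{k}\langle A_1\cdots A_n\rangle=\mathrm{k}\langle A_1\cdots A_n\rangle$ and multiplying by an invertible element of $\mathrm{k}\langle A_n\rangle$ shows $\mathcal{H}\subseteq\mathcal{H}(A_1\cdots A_{n-1})$ after the appropriate translation; consequently $\mathrm{k}\langle(A_1\cdots A_{n-1})\mathcal{H}\rangle$ is $\mathcal{H}$-stable and the induction applied to $A_1,\ldots,A_{n-1}$ (whose product, enlarged by $\mathcal{H}$, has stabilizer containing $\mathcal{H}$) yields
\[
\dim_{\mathrm{k}}\big((A_1\cdots A_{n-1})\mathcal{H}\big)\geq\sum_{i=1}^{n-1}\dim_{\mathrm{k}}(A_i\mathcal{H})-(n-2)\dim_{\mathrm{k}}(\mathcal{H}).
\]
Combining the two displayed inequalities and cancelling gives statement~(1) for $n$, completing the induction; statement~(2) then follows from $(1)\Rightarrow(2)$.

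The main obstacle I expect is the bookkeeping around the stabilizer in the inductive step: one must be careful that the $\mathcal{H}$ appearing in the bound for the sub-product $A_1\cdots A_{n-1}$ is legitimately the \emph{same} algebra $\mathcal{H}=\mathcal{H}(A_1\cdots A_n)$ rather than the a priori larger $\mathcal{H}(A_1\cdots A_{n-1})$, and that replacing $A_1\cdots A_{n-1}$ by its $\mathcal{H}$-saturation $(A_1\cdots A_{n-1})\mathcal{H}$ does not disturb the hypothesis ``$\mathrm{k}\langle\,\cdot\,\rangle\cap U(\mathcal{A})\neq\emptyset$'' nor the finiteness-of-subalgebras condition. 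All of this rests on commutativity (to move $\mathcal{H}$ past the various factors and to identify $\langle\mathcal{H}AB\rangle$ with $\langle AB\rangle$), exactly as in Assertion~2 of Theorem~\ref{Th_KL_Acom}; since $\mathcal{A}$ is assumed commutative here, the argument goes through verbatim, which is why the proof can safely be omitted as stated and modelled on \cite{Lec}.
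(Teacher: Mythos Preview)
Your overall strategy---induction on $n$ with Theorem~\ref{Th_KL_Acom} providing the $n=2$ step, as in \cite{Lec}---is exactly what the paper intends (the proof there is omitted with a reference to Theorem~2.7 of \cite{Lec}). The equivalences $(1)\Rightarrow(2)\Rightarrow(3)$ and the first displayed inequality from the $n=2$ case applied to $(A_1\cdots A_{n-1},A_n)$ are fine.

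However, the stabilizer bookkeeping in your inductive step is reversed, and this is a genuine gap. You claim that ``multiplying by an invertible element of $\mathrm{k}\langle A_n\rangle$ shows $\mathcal{H}\subseteq\mathcal{H}(A_1\cdots A_{n-1})$'' and later refer to ``the a priori larger $\mathcal{H}(A_1\cdots A_{n-1})$''. The containment goes the \emph{other} way: for $h\in\mathcal{H}(A_1\cdots A_{n-1})$ one has, by commutativity,
\[
h\,\mathrm{k}\langle A_1\cdots A_n\rangle
=\mathrm{k}\langle A_n\rangle\, h\,\mathrm{k}\langle A_1\cdots A_{n-1}\rangle
\subseteq \mathrm{k}\langle A_n\rangle\,\mathrm{k}\langle A_1\cdots A_{n-1}\rangle
=\mathrm{k}\langle A_1\cdots A_n\rangle,
\]
so $\mathcal{H}(A_1\cdots A_{n-1})\subseteq\mathcal{H}$, not the reverse. (A one-line group example: $A_1=\{0\}$, $A_2=\{0,1\}$ in $\mathbb{Z}/2$ has $\mathcal{H}(A_1A_2)$ the whole group but $\mathcal{H}(A_1)$ trivial.) Your argument, as written, then tries to replace the sub-product stabilizer by the \emph{smaller} algebra $\mathcal{H}$ on the negative side of the inequality, which moves the bound the wrong way.

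There are two clean fixes. Either (i) apply the inductive hypothesis directly to $A_1,\ldots,A_{n-1}$, use the correct inclusion $\mathcal{H}(A_1\cdots A_{n-1})\subseteq\mathcal{H}$ to replace $-(n-2)\dim_{\mathrm{k}}\mathcal{H}(A_1\cdots A_{n-1})$ by $-(n-2)\dim_{\mathrm{k}}\mathcal{H}$ (this yields statement~(2), and then (1) follows by first replacing each $A_i$ by $A_i\mathcal{H}$); or (ii) keep your $\mathcal{H}$-saturation route but supply the missing half: setting $\mathcal{H}':=\mathcal{H}\big((A_1\cdots A_{n-1})\mathcal{H}\big)$, you have $\mathcal{H}\subseteq\mathcal{H}'$ trivially, and $\mathcal{H}'\subseteq\mathcal{H}$ because for $h\in\mathcal{H}'$,
\[
h\,\mathrm{k}\langle A_1\cdots A_n\rangle
=h\,\mathrm{k}\langle (A_1\cdots A_{n-1})\mathcal{H}\rangle\,\mathrm{k}\langle A_n\rangle
\subseteq \mathrm{k}\langle (A_1\cdots A_{n-1})\mathcal{H}\rangle\,\mathrm{k}\langle A_n\rangle
=\mathrm{k}\langle A_1\cdots A_n\rangle.
\]
Hence $\mathcal{H}'=\mathcal{H}$, and your displayed inductive inequality is then legitimate. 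Either way the proof goes through; you just need to get the direction of the stabilizer inclusion right.
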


\begin{remark}
In~\cite{Hou}, Hou shows that in the context of non separable extension of
fields, a counterexample to Theorem~\ref{Th_KL_Acom} could only arise with
$\dim A \geq6$. Following the proof given in~\cite{Hou}, we are able to show
that if $\dim A \leq4$, then no hypotheses on $\mathbb{A}(A)$ other than
commutativity is needed to get Theorem~\ref{Th_KL_Acom}.
\end{remark}

\subsection{Remarks on Olson type Theorem}

It is known that Kneser's theorem does not hold for non abelian group. In
\cite{Ols}, Olson gave a weaker version of Kneser's theorem for arbitrary
groups. This Olson theorem admits a natural linearization for division rings
\cite{EL2}.\ It is tempting to look for a possible analogue in our algebras
context.\ In fact, by using the Kemperman linear transform defined in
\cite{EL2} and arguments closed from those we have used to establish Theorem
\ref{Th_KL_Acom} one can prove the following analogue of Olson's theorem where
no hypothesis on the commutativity of $\mathcal{A}$ neither on the number of
its finite-dimensional subalgebras is required.

\begin{theorem}
\label{TH-Ol8lin}Let $\mathcal{A}$ be a unital associative algebra over
$\mathrm{k}$ satisfying \textbf{H}$_{\mathrm{s}}$. Consider $V,W$
finite-dimensional $\mathrm{k}$-vector spaces in $\mathcal{A}$ such that
$V\cap U(\mathcal{A})\neq\emptyset$ and $W\cap U(\mathcal{A})\neq\emptyset$.
Then one of the two following assertions holds

\begin{enumerate}
\item There exists a $\mathrm{k}$-vector subspace $N$ of $\mathrm{k}\langle
VW\rangle$ such that

\begin{itemize}
\item $N\cap U(\mathcal{A})=\emptyset$,

\item $\dim_{\mathrm{k}}\mathrm{k}\langle VW\rangle\geq\dim_{\mathrm{k}}V+
\dim_{\mathrm{k}}W-\dim_{\mathrm{k}}(N)$.
\end{itemize}

\item There exist a $\mathrm{k}$-vector subspace $S$ of $\mathrm{k}\langle
VW\rangle$ and a subalgebra $\mathcal{H}$ of $\mathcal{A}$ such that

\begin{itemize}
\item $S\cap U(\mathcal{A})\neq\emptyset,$

\item $\mathrm{k}\subset\mathcal{H}\subset\mathcal{A},$

\item $\dim_{\mathrm{k}}\mathrm{k}\langle VW\rangle\geq\dim_{\mathrm{k}}%
S\geq\dim_{\mathrm{k}}V+\dim_{\mathrm{k}}W-\dim_{\mathrm{k}}\mathcal{H}$,

\item $\mathcal{H}S=S$ or $S\mathcal{H}=S$.
\end{itemize}
\end{enumerate}
\end{theorem}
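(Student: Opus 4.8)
The plan is to imitate, in the present non-commutative setting, the inductive scheme underlying Lemma~\ref{Lem_diff} and Theorem~\ref{Th_KL_Acom}, but to replace the intersection/sum construction used there (which relied on the commutativity of $A$) by the Kemperman linear transform of \cite{EL2}, while keeping control of invertibility throughout by means of Proposition~\ref{Prop_base_inv} and Lemma~\ref{Lemma_VDM}, both available because $\mathcal{A}$ satisfies \textbf{H}$_{\mathrm{s}}$. First I would normalize: choosing $a\in V\cap U(\mathcal{A})$ and $b\in W\cap U(\mathcal{A})$ and replacing $(V,W)$ by $(a^{-1}V,\,Wb^{-1})$, we may assume $1\in V\cap W$; this multiplies $\mathrm{k}\langle VW\rangle$ by $a^{-1}(\,\cdot\,)b^{-1}$, which changes no relevant dimension and transports any $N$, $S$ or $\mathcal{H}$ satisfying the conclusion for the new pair back to the original one (for $\mathcal{H}$ one uses that left multiplication by $a^{-1}$ and right multiplication by $b^{-1}$ conjugate $\mathcal{H}_{l}$ to $\mathcal{H}_{l}$ and $\mathcal{H}_{r}$ to $\mathcal{H}_{r}$). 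By Proposition~\ref{Prop_base_inv}, $V$, $W$ and $\mathrm{k}\langle VW\rangle$ admit bases of invertible elements, and by Lemma~\ref{Lemma_VDM} each contains a one-parameter Vandermonde family $\{x_{1}+\alpha x_{2}+\cdots+\alpha^{m-1}x_{m}\mid\alpha\in\mathrm{k}\}$, infinitely many members of which are invertible and any $m$ of which form a basis. We then argue by induction on $\dim_{\mathrm{k}}V$. If $\dim_{\mathrm{k}}V=1$, then $V=\mathrm{k}$, $\mathrm{k}\langle VW\rangle=W$, and assertion~2 holds with $S=W$ and $\mathcal{H}=\mathrm{k}$.

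For the inductive step, fix an invertible $e$ in the Vandermonde family inside $W$ and apply the Kemperman linear transform of \cite{EL2} to $(V,W)$ and $e$ --- a one-sided variant of the construction in Lemma~\ref{Lem_diff} (reminiscent of the pair $(\mathrm{k}\langle A\rangle\cap\mathrm{k}\langle B\rangle e^{-1},\,\mathrm{k}\langle B\rangle+\mathrm{k}\langle A\rangle e)$ there), designed to operate with no commutativity hypothesis by never commuting any element past $e$. It produces a pair $(V_{e},W_{e})$ with $\dim_{\mathrm{k}}V_{e}+\dim_{\mathrm{k}}W_{e}=\dim_{\mathrm{k}}V+\dim_{\mathrm{k}}W$, $\dim_{\mathrm{k}}V_{e}\leq\dim_{\mathrm{k}}V$, and $\mathrm{k}\langle V_{e}W_{e}\rangle\subseteq\mathrm{k}\langle VW\rangle$ with $\mathcal{H}_{l}(V_{e}W_{e})$, resp. $\mathcal{H}_{r}(V_{e}W_{e})$, still stabilizing $\mathrm{k}\langle VW\rangle$ on the corresponding side. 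Unlike in a division ring, here $V_{e}$ or $W_{e}$ may fail to meet $U(\mathcal{A})$, and the three possibilities match the three alternatives of the statement. \emph{(a)} If, for some admissible $e$, $0<\dim_{\mathrm{k}}V_{e}<\dim_{\mathrm{k}}V$ and both $V_{e}$ and $W_{e}$ meet $U(\mathcal{A})$, apply the induction hypothesis to $(V_{e},W_{e})$; since $\dim_{\mathrm{k}}V_{e}+\dim_{\mathrm{k}}W_{e}=\dim_{\mathrm{k}}V+\dim_{\mathrm{k}}W$ and $\mathrm{k}\langle V_{e}W_{e}\rangle\subseteq\mathrm{k}\langle VW\rangle$, the resulting $N$, or $(S,\mathcal{H})$, works verbatim for $(V,W)$, the side of $\mathcal{H}S=S$ versus $S\mathcal{H}=S$ being dictated and preserved by the one-sidedness of the transform. \emph{(b)} If some output subspace misses $U(\mathcal{A})$, then multiplying it on the appropriate side by an invertible element of its companion space lands it inside $\mathrm{k}\langle VW\rangle$ as a subspace $N$ with $N\cap U(\mathcal{A})=\emptyset$, and the dimension identity degrades to $\dim_{\mathrm{k}}\mathrm{k}\langle VW\rangle\geq\dim_{\mathrm{k}}V+\dim_{\mathrm{k}}W-\dim_{\mathrm{k}}N$, i.e. assertion~1. \emph{(c)} If the transform never lowers the dimension of $V$, then, exactly as in the corresponding (commutativity-free) step of Lemma~\ref{Lem_diff} and using that $W$ has a basis of invertibles, one gets $VW\subseteq W$ and hence $\mathrm{k}\langle VW\rangle=W$; taking $S:=W$ and $\mathcal{H}:=\mathcal{H}_{l}(W)$, one has $\mathcal{H}\supseteq\mathrm{k}\langle V\rangle$, $\mathcal{H}$ is a finite-dimensional subalgebra of $\mathcal{A}$ (it contains $1$, and is finite-dimensional since $W$ meets $U(\mathcal{A})$, by the remark following its definition in Section~2), and $\mathrm{k}\subsetneq\mathcal{H}$ when $\dim_{\mathrm{k}}V\geq2$; then $\mathcal{H}S=S$, $S\cap U(\mathcal{A})\neq\emptyset$, and $\dim_{\mathrm{k}}S=\dim_{\mathrm{k}}W\geq\dim_{\mathrm{k}}V+\dim_{\mathrm{k}}W-\dim_{\mathrm{k}}\mathcal{H}$ because $\dim_{\mathrm{k}}\mathcal{H}\geq\dim_{\mathrm{k}}V$, which is assertion~2.

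As in the proof of Theorem~\ref{Th_KL_Acom}, to reach case~(c) in a usable form one does not work with a single $e$ but uses Lemma~\ref{Lemma_VDM} to select $n=\dim_{\mathrm{k}}V$ distinct scalars $\alpha_{1},\dots,\alpha_{n}$ for which the $x_{\alpha_{i}}$ form a basis of invertible elements of $V$ and the associated transforms stall at a common stabilizer, so that this common subalgebra stabilizes the whole sum $\sum_{i}\mathrm{k}\langle x_{\alpha_{i}}W\rangle=\mathrm{k}\langle VW\rangle$; this is the point where finiteness of the available data is exploited, just as the finitely-many-subalgebras hypothesis was exploited in Theorem~\ref{Th_KL_Acom}, except that here no such hypothesis is needed since one only needs the stabilizer of $\mathrm{k}\langle VW\rangle$, which is automatically finite-dimensional once $\mathrm{k}\langle VW\rangle$ contains an invertible.

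The main obstacle is the construction and verification of the Kemperman linear transform itself. In the commutative case of Lemma~\ref{Lem_diff}, the containment $\mathrm{k}\langle V_{e}W_{e}\rangle\subseteq\mathrm{k}\langle VW\rangle$ relied on the identity $e^{-1}ue=u$ for $u\in\mathrm{k}\langle A\rangle$, which is false in general; so the transform must be organised so that no element is ever conjugated by $e$, which forces the asymmetric form it takes in \cite{EL2}. Importing that definition and checking that its verification survives under \textbf{H}$_{\mathrm{s}}$ --- rather than in a division ring, where every nonzero element is invertible --- is the technical heart of the argument, and it is precisely the possible failure of ``every nonzero transformed vector is invertible'' that is responsible for the appearance of the subspace $N$ of assertion~1, a phenomenon that is vacuous in the division ring setting of \cite{EL2}.
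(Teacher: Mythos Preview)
The paper does not actually prove Theorem~\ref{TH-Ol8lin}; it only states that the result follows ``by using the Kemperman linear transform defined in \cite{EL2} and arguments close from those we have used to establish Theorem~\ref{Th_KL_Acom}''. Your proposal follows precisely this indicated route: normalize, induct on $\dim_{\mathrm{k}}V$, apply the Kemperman transform of \cite{EL2} at each step, and branch according to whether the transformed pair still meets $U(\mathcal{A})$ (the new phenomenon compared to the division-ring case of \cite{EL2}, giving assertion~1), drops in dimension (induction), or stalls (assertion~2). So as far as one can compare, your approach coincides with the paper's.

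One point deserves tightening. Your final paragraph, which attempts to rerun the Vandermonde selection of Theorem~\ref{Th_KL_Acom} so that ``the associated transforms stall at a common stabilizer'' of $\mathrm{k}\langle VW\rangle$, is both unnecessary and mis-aimed. In Theorem~\ref{Th_KL_Acom} that device was needed because the conclusion concerns $\mathcal{H}_{l}(AB)$ itself, and one had to force the infinitely many auxiliary subalgebras $\mathcal{A}_{a}$ to coincide via the finitely-many-subalgebras hypothesis. Here the statement asks only for \emph{some} pair $(S,\mathcal{H})$ with $\mathcal{H}S=S$ (or $S\mathcal{H}=S$) and the right dimension inequality; your case~(c) already delivers this with $S=W$ and $\mathcal{H}=\mathcal{H}_{l}(W)\supseteq V$, and cases~(a) and~(b) propagate whatever the induction hands back inside $\mathrm{k}\langle V_{e}W_{e}\rangle\subseteq\mathrm{k}\langle VW\rangle$. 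No Vandermonde averaging over a basis of $V$ is required, and no finiteness-of-subalgebras hypothesis enters --- which is exactly the point the paper stresses just before stating the theorem. Dropping that last paragraph (or rewriting it simply as the observation that case~(c) is reached when $V_{e}=V$ for every invertible $e$ in a basis of $W$, whence $Ve\subseteq W$ for all such $e$ and thus $VW\subseteq W$) would make the sketch cleaner and fully in line with the paper's intent.
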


Assertion 2 looks indeed as a natural analogue of Olson's theorem for
algebras.\ Also, when $\mathcal{A}$ is a division ring, we must have $N=\{0\}$
in assertion 1.$\ $This is unfortunately not the case in general. Moreover,
for an algebra $\mathcal{A}$, one can have few constraints on the dimensions
of the subspaces $N$ such that $N\cap U(\mathcal{A})=\emptyset$. This is
notably the case of the matrix algebra $M_{n}(\mathbb{C})$ which admits
subspaces $N$ of any dimension less than $n^{2}-n$ containing no invertible
matrices or the Banach algebra of continuous functions from $[0,1]$ to
$\mathbb{R}$. So it eventually appears that this Olson type theorem for
algebras yields only few information on the dimension of the space products
$\mathrm{k}\langle VW\rangle$.

Now, if we assume that $V$ is commutative, we obtain immediately the following
corollary of Lemma \ref{Lem_diff}.

\begin{corollary}
\label{CorOlsonWeak}Assume $\mathcal{A}$ satisfies \textbf{H}$_{\mathrm{s}}%
$\footnote{Observe there is no hypothesis on the number of subalgebras of
$\mathcal{A}$ here.} and consider $V,W$ finite-dimensional $\mathrm{k}$-vector
spaces in $\mathcal{A}$ such that $V$ is commutative, $V\cap U(\mathcal{A}%
)\neq\emptyset$ and $W\cap U(\mathcal{A})\neq\emptyset$. Then there exist a
$\mathrm{k}$-vector subspace $S$ of $\mathrm{k}\langle VW\rangle$ and a
finite-dimensional subalgebra $\mathcal{H}$ of $\mathcal{A}$ such that

\begin{itemize}
\item $S\cap U(\mathcal{A})\neq\emptyset,$

\item $\mathrm{k}\subset\mathcal{H}\subset\mathcal{A},$

\item $\dim_{\mathrm{k}}\mathrm{k}\langle VW\rangle\geq\dim_{\mathrm{k}}%
S\geq\dim_{\mathrm{k}}V+\dim_{\mathrm{k}}W-\dim_{\mathrm{k}}\mathcal{H}$,

\item $\mathcal{H}S=S$.
\end{itemize}
\end{corollary}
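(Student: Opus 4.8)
The plan is to deduce the statement directly from Lemma~\ref{Lem_diff}, after translating the subspaces $V$ and $W$ into spanning sets. First I would choose finite subsets $A$ and $B$ of $\mathcal{A}_{\ast}$ consisting of nonzero vectors with $\mathrm{k}\langle A\rangle=V$ and $\mathrm{k}\langle B\rangle=W$; this is possible because $V$ and $W$ are finite-dimensional. Since $V$ is commutative and $A\subseteq V$, the set $A$ is commutative, and by construction $\dim_{\mathrm{k}}(A)=\dim_{\mathrm{k}}V$, $\dim_{\mathrm{k}}(B)=\dim_{\mathrm{k}}W$ and $\mathrm{k}\langle AB\rangle=\mathrm{k}\langle\mathrm{k}\langle A\rangle\,\mathrm{k}\langle B\rangle\rangle=\mathrm{k}\langle VW\rangle$ by the span identity recorded in Section~2. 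The hypotheses $V\cap U(\mathcal{A})\neq\emptyset$ and $W\cap U(\mathcal{A})\neq\emptyset$ read as $\mathrm{k}\langle A\rangle\cap U(\mathcal{A})\neq\emptyset$ and $\mathrm{k}\langle B\rangle\cap U(\mathcal{A})\neq\emptyset$, so (recalling $\mathcal{A}$ satisfies \textbf{H}$_{\mathrm{s}}$) all the hypotheses of Lemma~\ref{Lem_diff} are met.

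Next I would fix any $a\in V\cap U(\mathcal{A})=\mathrm{k}\langle A\rangle\cap U(\mathcal{A})$ and apply Lemma~\ref{Lem_diff} to this $a$. This yields a finite-dimensional subalgebra $\mathcal{A}_{a}$ with $\mathrm{k}\subseteq\mathcal{A}_{a}\subseteq\mathbb{A}(A)\subseteq\mathcal{A}$ and a $\mathrm{k}$-vector space $V_{a}\subseteq\mathrm{k}\langle AB\rangle=\mathrm{k}\langle VW\rangle$ with $V_{a}\cap U(\mathcal{A})\neq\emptyset$, $\mathcal{A}_{a}V_{a}=V_{a}$ and
\[
\dim_{\mathrm{k}}(V_{a})+\dim_{\mathrm{k}}(\mathcal{A}_{a})\geq\dim_{\mathrm{k}}(A)+\dim_{\mathrm{k}}(B)=\dim_{\mathrm{k}}V+\dim_{\mathrm{k}}W.
\]
Setting $S:=V_{a}$ and $\mathcal{H}:=\mathcal{A}_{a}$, the inclusion $S\subseteq\mathrm{k}\langle VW\rangle$ gives $\dim_{\mathrm{k}}\mathrm{k}\langle VW\rangle\geq\dim_{\mathrm{k}}S$, rearranging the displayed inequality gives $\dim_{\mathrm{k}}S\geq\dim_{\mathrm{k}}V+\dim_{\mathrm{k}}W-\dim_{\mathrm{k}}\mathcal{H}$, the relation $\mathcal{H}S=\mathcal{A}_{a}V_{a}=V_{a}=S$ and the nonemptiness of $S\cap U(\mathcal{A})$ are immediate, and $\mathrm{k}\subset\mathcal{H}\subset\mathcal{A}$ with $\mathcal{H}$ finite-dimensional. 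This establishes each bullet of the corollary.

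There is essentially no obstacle here: all the content sits in Lemma~\ref{Lem_diff}, and the corollary is just a restatement in terms of subspaces rather than spanning sets. The only points requiring a word of care are that a spanning set $A$ of a commutative space $V$ may be taken commutative (automatic since $A\subseteq V$) and that $\mathrm{k}\langle AB\rangle=\mathrm{k}\langle VW\rangle$. Finally, the footnote's remark — that no hypothesis on the number of subalgebras of $\mathcal{A}$ is needed — is inherited for free, since Lemma~\ref{Lem_diff} itself imposes no such assumption.
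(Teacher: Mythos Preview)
Your proposal is correct and takes essentially the same approach as the paper: pick $a\in V\cap U(\mathcal{A})$, apply Lemma~\ref{Lem_diff}, and set $S=V_{a}$, $\mathcal{H}=\mathcal{A}_{a}$. The only difference is that you spell out the translation from subspaces to spanning sets more carefully than the paper does, but the argument is identical.
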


\begin{proof}
Choose $a\in V\cap U(\mathcal{A})$. By Lemma \ref{Lem_diff}, there exists a
(commutative) finite-dimensional subalgebra $\mathcal{H}$ of $\mathcal{A}$
such that $\mathrm{k}\subseteq\mathcal{H}\subseteq\mathbb{A}(A)$ and a vector
space $S$ contained in $\mathrm{k}\langle VW\rangle$ such that $S\cap
U(\mathcal{A})\neq\emptyset$, $\mathcal{H}S=S$ and
\[
\dim_{\mathrm{k}}(S)+\dim_{\mathrm{k}}(\mathcal{H})\geq\dim_{\mathrm{k}%
}(V)+\dim_{\mathrm{k}}(W).
\]

\end{proof}

\section{Hamidoune and Tao type results}

In this section, we assume $\mathcal{A}$ satisfies \textbf{H}$_{\mathrm{s}}$
so that every subspace of $\mathcal{A}$ containing an invertible element
admits a basis of invertible elements (Proposition~\ref{Prop_base_inv}).

\subsection{Linear hamidoune's connectivity}

The notion of connectivity for a subset $S$ of a group $G$ was developed by
Hamidoune in \cite{Hami2}.\ As suggested by Tao in \cite{Tao}, it is
interesting to generalize Hamidoune's definition by introducing an additional
parameter $\lambda$. The purpose of this paragraph is to adapt this notion of
connectivity to our algebra context. Assume $V$ is a finite-dimensional fixed
$\mathrm{k}$-subspace of $\mathcal{A}$ such that $V\cap U(\mathcal{A}%
)\neq\emptyset$ and $\lambda$ is a real parameter. For any finite-dimensional
$\mathrm{k}$-subspace $W$ of $\mathcal{A}$, we define%
\begin{equation}
c(W):=\dim_{\mathrm{k}}(\mathrm{k}\langle WV \rangle)-\lambda\dim_{\mathrm{k}%
}(W). \label{connectivity}%
\end{equation}
For any $x\in U(\mathcal{A})$, we have immediately that $c(xW)=c(W).\;$

\begin{lemma}
For any finite-dimensional subspaces $W_{1},W_{2}$ and $V$ of $\mathcal{A}$,
we have%
\[
c(W_{1}+W_{2})+c(W_{1}\cap W_{2})\leq c(W_{1})+c(W_{2}).
\]

\end{lemma}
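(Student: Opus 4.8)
The plan is to imitate the classical submodularity argument for Minkowski/sumset "connectivity" functions, transported to the linear setting. The key observation is that the function $W \mapsto \dim_{\mathrm{k}}(\mathrm{k}\langle WV\rangle)$ is submodular, and that $W \mapsto \dim_{\mathrm{k}}(W)$ is modular; subtracting $\lambda$ times a modular function preserves submodularity, so the inequality for $c$ follows. Concretely, since $\dim_{\mathrm{k}}(W_1+W_2)+\dim_{\mathrm{k}}(W_1\cap W_2)=\dim_{\mathrm{k}}(W_1)+\dim_{\mathrm{k}}(W_2)$ exactly (the dimension formula for subspaces), it suffices to prove
\[
\dim_{\mathrm{k}}(\mathrm{k}\langle (W_1+W_2)V\rangle)+\dim_{\mathrm{k}}(\mathrm{k}\langle (W_1\cap W_2)V\rangle)\leq \dim_{\mathrm{k}}(\mathrm{k}\langle W_1V\rangle)+\dim_{\mathrm{k}}(\mathrm{k}\langle W_2V\rangle),
\]
and then combine with the modular identity with the sign of $\lambda\dim_{\mathrm{k}}$ working in our favour.

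For the submodularity of the product-dimension term, first I would note that $\mathrm{k}\langle (W_1+W_2)V\rangle=\mathrm{k}\langle W_1V\rangle+\mathrm{k}\langle W_2V\rangle$, because $(W_1+W_2)V=W_1V\cup W_2V$ as sets and $\mathrm{k}\langle A\cup B\rangle=\mathrm{k}\langle A\rangle+\mathrm{k}\langle B\rangle$ (this is recorded in Section 2). Hence
\[
\dim_{\mathrm{k}}(\mathrm{k}\langle (W_1+W_2)V\rangle)=\dim_{\mathrm{k}}(\mathrm{k}\langle W_1V\rangle)+\dim_{\mathrm{k}}(\mathrm{k}\langle W_2V\rangle)-\dim_{\mathrm{k}}(\mathrm{k}\langle W_1V\rangle\cap\mathrm{k}\langle W_2V\rangle).
\]
So the whole inequality reduces to the single containment
\[
\mathrm{k}\langle (W_1\cap W_2)V\rangle\subseteq \mathrm{k}\langle W_1V\rangle\cap\mathrm{k}\langle W_2V\rangle,
\]
which gives $\dim_{\mathrm{k}}(\mathrm{k}\langle (W_1\cap W_2)V\rangle)\leq\dim_{\mathrm{k}}(\mathrm{k}\langle W_1V\rangle\cap\mathrm{k}\langle W_2V\rangle)$. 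This containment is immediate: $(W_1\cap W_2)V\subseteq W_1V$ and $(W_1\cap W_2)V\subseteq W_2V$, and taking $\mathrm{k}$-spans respects inclusions, so $\mathrm{k}\langle (W_1\cap W_2)V\rangle$ is contained in each of $\mathrm{k}\langle W_1V\rangle$ and $\mathrm{k}\langle W_2V\rangle$.

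Putting the pieces together: adding the displayed dimension formula for $\mathrm{k}\langle(W_1+W_2)V\rangle$ to the trivial bound $\dim_{\mathrm{k}}(\mathrm{k}\langle(W_1\cap W_2)V\rangle)\le\dim_{\mathrm{k}}(\mathrm{k}\langle W_1V\rangle\cap\mathrm{k}\langle W_2V\rangle)$ yields
\[
\dim_{\mathrm{k}}(\mathrm{k}\langle(W_1+W_2)V\rangle)+\dim_{\mathrm{k}}(\mathrm{k}\langle(W_1\cap W_2)V\rangle)\le\dim_{\mathrm{k}}(\mathrm{k}\langle W_1V\rangle)+\dim_{\mathrm{k}}(\mathrm{k}\langle W_2V\rangle);
\]
subtracting the exact identity $\lambda\dim_{\mathrm{k}}(W_1+W_2)+\lambda\dim_{\mathrm{k}}(W_1\cap W_2)=\lambda\dim_{\mathrm{k}}(W_1)+\lambda\dim_{\mathrm{k}}(W_2)$ from this inequality produces exactly $c(W_1+W_2)+c(W_1\cap W_2)\le c(W_1)+c(W_2)$, using the definition \eqref{connectivity}. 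There is essentially no obstacle here; the only point needing the slightest care is the set-level identity $(W_1+W_2)V=W_1V\cup W_2V$ — the inclusion $\supseteq$ is clear, and for $\subseteq$ one writes a general element of $(W_1+W_2)V$ as $(w_1+w_2)v=w_1v+w_2v$, which a priori lies in $W_1V+W_2V$ rather than $W_1V\cup W_2V$, but after taking $\mathrm{k}$-spans this distinction disappears, so it is cleanest to work directly with the span identity $\mathrm{k}\langle(W_1+W_2)V\rangle=\mathrm{k}\langle W_1V\rangle+\mathrm{k}\langle W_2V\rangle$ rather than with the Minkowski products themselves.
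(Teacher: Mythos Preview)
Your proof is correct and follows essentially the same approach as the paper's: both use the dimension formula for $W_1,W_2$ (modularity), the identity $\mathrm{k}\langle(W_1+W_2)V\rangle=\mathrm{k}\langle W_1V\rangle+\mathrm{k}\langle W_2V\rangle$, the containment $\mathrm{k}\langle(W_1\cap W_2)V\rangle\subseteq\mathrm{k}\langle W_1V\rangle\cap\mathrm{k}\langle W_2V\rangle$, and then subtract $\lambda$ times the modular identity from the resulting submodularity inequality. Your only slip is the set-level claim $(W_1+W_2)V=W_1V\cup W_2V$, which is false as stated, but you correctly catch and repair this yourself by passing to spans, so the argument stands.
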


\begin{proof}
We have
\begin{equation}
\dim_{\mathrm{k}}(W_{1}+W_{2})+\dim_{\mathrm{k}}(W_{1}\cap W_{2}%
)=\dim_{\mathrm{k}}(W_{1})+\dim_{\mathrm{k}}(W_{2}) \label{dimform}%
\end{equation}
and%
\[
\dim_{\mathrm{k}}(\mathrm{k}\langle W_{1}V\rangle+\mathrm{k}\langle
W_{2}V\rangle)+\dim_{\mathrm{k}}(\mathrm{k}\langle W_{1}V\rangle\cap
\mathrm{k}\langle W_{2}V\rangle)=\dim_{\mathrm{k}}(W_{1}V)+\dim_{\mathrm{k}%
}(W_{2}V).
\]
Observe that $\mathrm{k}\langle(W_{1}+W_{2})\cdot V\rangle=\mathrm{k}\langle
W_{1}V\rangle+\mathrm{k}\langle W_{2}V\rangle$ and $\mathrm{k}\langle
(W_{1}\cap W_{2})\cdot V\rangle\subseteq\mathrm{k}\langle W_{1}V\rangle
\cap\mathrm{k}\langle W_{2}V\rangle$.\ This gives%
\begin{equation}
\dim_{\mathrm{k}}(\mathrm{k}\langle(W_{1}+W_{2})\cdot V\rangle)+\dim
_{\mathrm{k}}(\mathrm{k}\langle W_{1}\cap W_{2})\cdot V \rangle)\leq
\dim_{\mathrm{k}}(\mathrm{k}\langle W_{1}V\rangle)+ \dim_{\mathrm{k}%
}(\mathrm{k}\langle W_{2}V\rangle). \label{ineq}%
\end{equation}
We then obtain the desired equality by subtracting from (\ref{ineq}),
$\lambda$ copies of (\ref{dimform}).
\end{proof}

\bigskip

Similarly to \cite{Hami2}, we define the \emph{connectivity} $\kappa
=\kappa(V)$ as the infimum of $c(W)$ over all finite-dimensional $\mathrm{k}%
$-subspaces of $\mathcal{A}$ such that $W\cap U(\mathcal{A})\neq\emptyset$.\ A
\emph{fragment} of $V$ is a finite-dimensional $\mathrm{k}$-subspace of
$\mathcal{A}$ which attains the infimum $\kappa$. An \emph{atom} of $V$ is a
fragment of minimal dimension. Since $c(xW)=c(W)$ for any $x\in U(\mathcal{A}%
)$, any left translate by an invertible of a fragment is a fragment and any
left translate of an atom is an atom. Since $\dim_{\mathrm{k}}(WV)\geq
\dim_{\mathrm{k}}(W)$ (because $V\cap U(\mathcal{A})\neq\emptyset$), we have%
\begin{equation}
c(W)\geq(1-\lambda)\dim_{\mathrm{k}}(W). \label{maj_connecti}%
\end{equation}
We observe that when $\lambda<1$, $c(W)$ is always positive and takes a
discrete set of values.\ Therefore, when $\lambda\leq1$, there exists at least
one fragment and at least one atom. \emph{In the remainder of this paragraph
we will assume that }$0<\lambda\leq1$. Let $W_{1}$ and $W_{2}$ be two
fragments such that $W_{1}\cap W_{2}$ intersects $U(\mathcal{A})$. By the
previous lemma, we derive%
\[
c(W_{1}+W_{2})+c(W_{1}\cap W_{2})\leq c(W_{1})+c(W_{2})=2\kappa.
\]
Since $W_{1}+W_{2}$ and $W_{1}\cap W_{2}$ are finite-dimensional and
intersects $U(\mathcal{A})$, we must have $c(W_{1}+W_{2})\geq\kappa$ and
$c(W_{1}\cap W_{2})\geq\kappa$.\ Hence $c(W_{1}+W_{2})=c(W_{1}\cap
W_{2})=\kappa$. This means that $W_{1}+W_{2}$ and $W_{1}\cap W_{2}$ are also
fragments. If we assume now that $W_{1}$ and $W_{2}$ are atoms such that
$W_{1}\cap W_{2}$ intersects $U(\mathcal{A})$, we obtain that $W_{1}=W_{2}$.

\begin{proposition}
\label{Prop_connect}\label{Cor_hami} Assume $\mathcal{A}$ satisfies
\textbf{H}$_{\mathrm{s}}$ and $V$ is a finite-dimensional fixed $\mathrm{k}%
$-subspace of $\mathcal{A}$ such that $V\cap U(\mathcal{A})\neq\emptyset$

\begin{enumerate}
\item There exists a unique atom $\mathcal{H}_{\lambda}$ containing $1$ for
$V$.

\item This atom is a subalgebra of $\mathcal{A}$ containing $\mathcal{H}%
$\textbf{$_{l}(V)$}.

\item Moreover the atoms of $V$ which intersect $U(\mathcal{A})$ are the right
$\mathcal{H}_{\lambda}$-modules $x\mathcal{H}_{\lambda}$ where $x$ runs over
$U(\mathcal{A})$.

\item For any finite-dimensional $\mathrm{k}$-subspace $W$ satisfying $W\cap
U(\mathcal{A})\neq\emptyset$, we have
\[
\dim_{\mathrm{k}}(\mathrm{k}\langle WV \rangle)\geq\lambda\dim_{\mathrm{k}%
}(W)+\dim_{\mathrm{k}}(V)-\lambda\dim_{\mathrm{k}}(\mathcal{H}_{\lambda})
\]

\end{enumerate}
\end{proposition}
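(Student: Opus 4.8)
The plan is to transport Hamidoune's atom argument to the linear setting, using the submodularity inequality for $c$ together with the two facts recorded just before the statement: if $W_1,W_2$ are fragments (resp. atoms) whose intersection meets $U(\mathcal{A})$, then $W_1+W_2$ and $W_1\cap W_2$ are fragments (resp. $W_1=W_2$).

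First I would settle assertion 1 and the subalgebra part of assertion 2. An atom exists because $0<\lambda\le 1$, and, being one of the subspaces over which $\kappa$ is defined, it meets $U(\mathcal{A})$; left-translating it by the inverse of one of its invertible elements (which leaves $c$ and the dimension unchanged) produces an atom containing $1$. If $F_1,F_2$ are atoms containing $1$, then $1\in F_1\cap F_2$, so the recorded uniqueness gives $F_1=F_2$; denote this atom $\mathcal{H}_\lambda$. For any $a\in\mathcal{H}_\lambda\cap U(\mathcal{A})$ the space $a^{-1}\mathcal{H}_\lambda$ is an atom containing $1$, hence equals $\mathcal{H}_\lambda$, i.e. $a\mathcal{H}_\lambda=\mathcal{H}_\lambda$. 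Since \textbf{H}$_{\mathrm{s}}$ ensures $\mathcal{H}_\lambda$ is spanned by $\mathcal{H}_\lambda\cap U(\mathcal{A})$ (Proposition~\ref{Prop_base_inv}), it follows that $\mathcal{H}_\lambda\mathcal{H}_\lambda\subseteq\mathcal{H}_\lambda$, so $\mathcal{H}_\lambda$ is a unital subalgebra.

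For assertion 3: a left translate $x\mathcal{H}_\lambda$ with $x\in U(\mathcal{A})$ is again an atom and meets $U(\mathcal{A})$; conversely if $F$ is an atom and $x\in F\cap U(\mathcal{A})$, then $x^{-1}F$ is an atom containing $1$, hence $x^{-1}F=\mathcal{H}_\lambda$ and $F=x\mathcal{H}_\lambda$. That $x\mathcal{H}_\lambda$ is a right $\mathcal{H}_\lambda$-module follows from $\mathcal{H}_\lambda\mathcal{H}_\lambda\subseteq\mathcal{H}_\lambda$. For assertion 4, $\mathcal{H}_\lambda$ is a fragment, so $c(\mathcal{H}_\lambda)=\kappa$; as $1\in\mathcal{H}_\lambda$ we have $V\subseteq\mathrm{k}\langle\mathcal{H}_\lambda V\rangle$, whence $\kappa\ge\dim_{\mathrm{k}}(V)-\lambda\dim_{\mathrm{k}}(\mathcal{H}_\lambda)$, and combining with $c(W)\ge\kappa$ for the given $W$ gives the claimed bound after rearranging.

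The step needing an actual idea is the inclusion $\mathcal{H}_l(V)\subseteq\mathcal{H}_\lambda$. Here I would consider $\mathcal{K}:=\mathrm{k}\langle\mathcal{H}_\lambda\,\mathcal{H}_l(V)\rangle$, which is a finite-dimensional subspace containing $1$ (recall $\mathcal{H}_l(V)$ is finite-dimensional since $V$ meets $U(\mathcal{A})$). Because $1\in\mathcal{H}_l(V)$ and $\mathcal{H}_l(V)V\subseteq V$, one has $\mathcal{H}_l(V)V=V$ as a set of products, hence $\mathrm{k}\langle\mathcal{K}V\rangle=\mathrm{k}\langle\mathcal{H}_\lambda V\rangle$; therefore $c(\mathcal{K})=\dim_{\mathrm{k}}(\mathrm{k}\langle\mathcal{H}_\lambda V\rangle)-\lambda\dim_{\mathrm{k}}(\mathcal{K})\le c(\mathcal{H}_\lambda)=\kappa$ because $\mathcal{H}_\lambda\subseteq\mathcal{K}$ and $\lambda>0$. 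Since $\mathcal{K}$ meets $U(\mathcal{A})$ we also have $c(\mathcal{K})\ge\kappa$, so $c(\mathcal{K})=\kappa$ and $\dim_{\mathrm{k}}(\mathcal{K})=\dim_{\mathrm{k}}(\mathcal{H}_\lambda)$, forcing $\mathcal{K}=\mathcal{H}_\lambda$ and in particular $\mathcal{H}_l(V)\subseteq\mathcal{H}_\lambda$. The hard part will be precisely this observation — that multiplying $V$ on the left by its own stabilizer changes nothing, while strict positivity of $\lambda$ upgrades the inclusion $\mathcal{H}_\lambda\subseteq\mathcal{K}$ to an equality of spaces; everything else is a direct replay of the classical atom calculus with cardinalities replaced by dimensions.
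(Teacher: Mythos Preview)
Your proof is correct and follows essentially the same route as the paper: existence/uniqueness of the atom containing $1$ via left-translation and the intersection argument, closure under multiplication via Proposition~\ref{Prop_base_inv}, and assertion~(4) from $c(\mathcal{H}_\lambda)\le c(W)$ together with $V\subseteq\mathrm{k}\langle\mathcal{H}_\lambda V\rangle$. For the inclusion $\mathcal{H}_l(V)\subseteq\mathcal{H}_\lambda$ the paper uses a slightly shorter variant of the same idea---rather than forming $\mathcal{K}=\mathrm{k}\langle\mathcal{H}_\lambda\,\mathcal{H}_l(V)\rangle$, it picks a single $x\in\mathcal{H}_l(V)\setminus\mathcal{H}_\lambda$ and notes that $\mathrm{k}\langle(\mathcal{H}_\lambda+\mathrm{k}x)V\rangle=\mathrm{k}\langle\mathcal{H}_\lambda V\rangle$ forces $c(\mathcal{H}_\lambda+\mathrm{k}x)<\kappa$, an immediate contradiction---but the underlying mechanism (left-multiplying $V$ by its stabilizer changes nothing, so enlarging $\mathcal{H}_\lambda$ inside $\mathcal{H}_l(V)$ would strictly lower $c$) is identical.
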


\begin{proof}
Since there exists at least one atom and the left translate of any atom by any
invertible is an atom, there exists one atom $\mathcal{H}$ containing $1$.
Now, this atom must be unique. Indeed, if $\mathcal{H}^{\prime}$ is another
atom containing $1,$ we have that $\mathcal{H}\cap\mathcal{H}^{\prime}$
intersects $U(\mathcal{A})$. Hence, by the previous arguments $\mathcal{H}%
=\mathcal{H}^{\prime}$. Now, for any $h\in\mathcal{H}\cap U(\mathcal{A})$, we
have that $\mathcal{H}\cap h^{-1}\mathcal{H}$ contains $1$.\ Since both
$\mathcal{H}$ and $h^{-1}\mathcal{H}$ are atoms, we must have $h^{-1}%
\mathcal{H}=\mathcal{H}$ and $\mathcal{H}=h\mathcal{H}$. So $\mathcal{H}$ is
stable under multiplication by any invertible of $\mathcal{H}$. By Proposition
\ref{Prop_base_inv}, $\mathcal{H}$ is then stable by multiplication.\ We then
deduce that $\mathcal{H}$ is a subalgebra of $\mathcal{A}$ by
Lemma~\ref{lem_stab}. Moreover, if $x\in\mathcal{H}_{l}(V)$ and\textbf{
}$x\notin\mathcal{H}$, then $(\mathcal{H}+kx)V=\mathcal{H}V$ and so
$c(\mathcal{H}+kx)<c(\mathcal{H})$ since $\lambda>0$ contradicting the
definition of an atom. Finally, given any atom $W$ of $V$ intersecting
$U(\mathcal{A})$, we must have $w^{-1}W=\mathcal{H}$ for any $w\in W\cap
U(\mathcal{A})$ since $\mathcal{H}$ is the unique atom containing $1$ and
$w^{-1}\mathcal{H}$ is an atom containing $1$.

Let us now prove~(4). By definition of $\kappa$, we have $\kappa
=c(\mathcal{H}_{\lambda})\leq c(W)$.\ This gives%
\[
\dim_{\mathrm{k}}(\mathrm{k}\langle\mathcal{H}_{\lambda}V\rangle)-\lambda
\dim_{\mathrm{k}}(\mathcal{H}_{\lambda})\leq\dim_{\mathrm{k}}(\mathrm{k}%
\langle WV\rangle)-\lambda\dim_{\mathrm{k}}(W).
\]
We thus get
\[
\dim_{\mathrm{k}}(\mathrm{k}\langle WV\rangle)\geq\lambda\dim_{\mathrm{k}%
}(W)+\dim_{\mathrm{k}}(\mathcal{H}_{\lambda}V)-\lambda\dim_{\mathrm{k}%
}(\mathcal{H}_{\lambda})\geq\lambda\dim_{\mathrm{k}}(W)+\dim_{\mathrm{k}%
}(V)-\lambda\dim_{\mathrm{k}}(\mathcal{H}_{\lambda}).
\]

\end{proof}

%\bigskip
%\begin{corollary}
%\label{Cor_hami}Assume the previous hypotheses for $\mathcal{A}$ and $V$.
%Then, for any $\lambda\leq1$, the algebra $\mathcal{A}$ admits a
%finite-dimensional subalgebra $\mathcal{H}_{\lambda}$ containing
%$\mathcal{H}_{l}(V)$ such that
%\[
%\dim_{\mathrm{k}}(\mathrm{k}\langle WV \rangle)\geq\lambda\dim_{\mathrm{k}%
%}(W)+\dim_{\mathrm{k}}(V)-\lambda\dim_{\mathrm{k}}(\mathcal{H}_{\lambda})
%\]
%for any finite-dimensional $\mathrm{k}$-subspace $W$ satisfying $W\cap
%U(\mathcal{A})\neq\emptyset$.
%\end{corollary}
%\begin{proof}
%Consider $\lambda\leq1$. By the previous proposition, the unique atom of $V$
%containing $1$ is a subalgebra $\mathcal{H}_{\lambda}$. By definition of
%$\kappa$, we have $\kappa=c(\mathcal{H}_{\lambda})\leq c(W)$.\ This gives%
%\[
%\dim_{\mathrm{k}}(\mathrm{•}hrm{k}\langle\mathcal{H}_{\lambda}V\rangle)-\lambda
%\dim_{\mathrm{k}}(\mathcal{H}_{\lambda})\leq\dim_{\mathrm{k}}(\mathrm{k}%
%\langle WV\rangle)-\lambda\dim_{\mathrm{k}}(W).
%\]
%We thus get
%\[
%\dim_{\mathrm{k}}(\mathrm{k}\langle WV\rangle)\geq\lambda\dim_{\mathrm{k}%
%}(W)+\dim_{\mathrm{k}}(\mathcal{H}_{\lambda}V)-\lambda\dim_{\mathrm{k}%
%}(\mathcal{H}_{\lambda})\geq\lambda\dim_{\mathrm{k}}(W)+\dim_{\mathrm{k}%
%}(V)-\lambda\dim_{\mathrm{k}}(\mathcal{H}_{\lambda}).
%\]
%\end{proof}
%\noindent

\begin{remark}
\ 

\begin{enumerate}
\item Assume $\mathcal{A}$ has no non trivial f.d. subalgebra. Then we must
have $\mathcal{H}_{\lambda}=\mathrm{k}$ for any $\lambda\leq1$. So we obtain
\[
\dim_{\mathrm{k}}(\mathrm{k}\langle WV\rangle)\geq\lambda(\dim_{\mathrm{k}%
}(W)-1)+\dim_{\mathrm{k}}(V)\text{ for any }\lambda\leq1\text{.}%
\]
In particular, for $\lambda=1$, this gives%
\[
\dim_{\mathrm{k}}(\mathrm{k}\langle WV\rangle)\geq\dim_{\mathrm{k}}%
(W)+\dim_{\mathrm{k}}(V)-1
\]
which generalizes Corollary \ref{CorAB-1}.

\item If $V$ and $W$ are such that $\dim_{\mathrm{k}}(\mathrm{k}\langle WV
\rangle)<\dim_{\mathrm{k}}(W)+\dim_{\mathrm{k}}(V)-1$, then the unique atom
$\mathcal{H}_{1}$ for $V$ containing $1$ is a subalgebra of $\mathcal{A}$ of
dimension at least $2$.

\item Contrary to Theorem \ref{Th_KL_Acom} where the lower bound makes appear
the stabilizer of $\mathrm{k}\langle WV\rangle$, the subalgebra $\mathcal{H}%
_{\lambda}$ in the previous corollary only depends on $\lambda$ and $V$ and is
the same for each subspace $W$.
\end{enumerate}
\end{remark}

\subsection{Tao's theorem for algebras}

\label{subsec-linTao}We say that $V=\mathrm{k}\langle A\rangle$, where $A$ is
a finite subset of $\mathcal{A}$, is a \emph{space of small doubling}, when
$\dim_{\mathrm{k}}(A^{2})=O(\dim_{\mathrm{k}}(A)).\;$Simplest examples of
spaces of small doubling are the spaces $V=\mathrm{k}\langle A\rangle$
containing $1$ and such that $\dim_{\mathrm{k}}(A^{2})=\dim_{\mathrm{k}}(A)$.
Then by Lemma \ref{lem_stab}, $V$ is a subalgebra containing $\mathrm{k}$. In
general, a space of small doubling $\mathrm{k}\langle A\rangle$ is not a
subalgebra and neither a left nor right $\mathcal{H}$-module for a subalgebra
$\mathrm{k}\subseteq\mathcal{H}\subseteq\mathcal{A}$. The following theorem,
which is a linear version of Theorem 1.2 in \cite{Tao}, permits to study the
spaces of small doubling in an algebra $\mathcal{A}$ satisfying \textbf{H}%
$_{\mathrm{s}}$.

\begin{theorem}
\label{Th_linTao}Consider finite-dimensional $\mathrm{k}$-subspaces $V$ and
$W$ of $\mathcal{A}$ (satisfying \textbf{H}$_{\mathrm{s}}$) intersecting
$U(\mathcal{A})$ such that $\dim_{\mathrm{k}}(W)\geq\dim_{\mathrm{k}}(V)$ and
$\dim_{\mathrm{k}}(\mathrm{k}\langle WV\rangle)\leq(2-\varepsilon
)\dim_{\mathrm{k}}(V)$ for some real $\varepsilon$ such that $0<\varepsilon
<2$. Then, there exists a finite-dimensional subalgebra $\mathcal{H}$ such
that $\dim_{\mathrm{k}}(\mathcal{H})\leq(\frac{2}{\varepsilon}-1)\dim
_{\mathrm{k}}(V)$, and $V$ is contained in the left $\mathcal{H}$-module
$\mathcal{H}V$ with $\dim_{\mathrm{k}}(\mathcal{H}V)\leq(\frac{2}{\varepsilon
}-1)\dim_{\mathrm{k}}(\mathcal{H})$.
\end{theorem}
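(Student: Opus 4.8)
The plan is to apply the linear Hamidoune connectivity of the previous subsection to the fixed space $V$ with the parameter $\lambda=1-\varepsilon/2$. Since $0<\varepsilon<2$ we have $0<\lambda<1$, so all the conclusions of Proposition~\ref{Prop_connect} are available: there is a unique atom $\mathcal{H}:=\mathcal{H}_{\lambda}$ of $V$ containing $1$, it is a finite-dimensional subalgebra of $\mathcal{A}$, and $\mathrm{k}\langle\mathcal{H}V\rangle$ is a left $\mathcal{H}$-module containing $V$. Thus the entire content of the theorem reduces to the two dimension estimates, and the single tool for both is the inequality $c(\mathcal{H})=\kappa(V)\le c(W)$, valid because $W$ is finite-dimensional and meets $U(\mathcal{A})$ (so it competes in the infimum defining $\kappa(V)$) while $\mathcal{H}$ attains that infimum.

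First I would write out $c(\mathcal{H})\le c(W)$ using the definition~(\ref{connectivity}):
\[
\dim_{\mathrm{k}}(\mathrm{k}\langle\mathcal{H}V\rangle)-\lambda\dim_{\mathrm{k}}(\mathcal{H})\le \dim_{\mathrm{k}}(\mathrm{k}\langle WV\rangle)-\lambda\dim_{\mathrm{k}}(W)\le (2-\varepsilon)\dim_{\mathrm{k}}(V)-\lambda\dim_{\mathrm{k}}(V),
\]
using $\dim_{\mathrm{k}}(\mathrm{k}\langle WV\rangle)\le(2-\varepsilon)\dim_{\mathrm{k}}(V)$ and $\dim_{\mathrm{k}}(W)\ge\dim_{\mathrm{k}}(V)$. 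With $\lambda=1-\varepsilon/2$ the right-hand side is $\lambda\dim_{\mathrm{k}}(V)$, so $\kappa=c(\mathcal{H})\le\lambda\dim_{\mathrm{k}}(V)$. Combining with the general lower bound $c(\mathcal{H})\ge(1-\lambda)\dim_{\mathrm{k}}(\mathcal{H})$ from~(\ref{maj_connecti}) gives $(1-\lambda)\dim_{\mathrm{k}}(\mathcal{H})\le\lambda\dim_{\mathrm{k}}(V)$, i.e. $\dim_{\mathrm{k}}(\mathcal{H})\le\frac{\lambda}{1-\lambda}\dim_{\mathrm{k}}(V)=(\tfrac{2}{\varepsilon}-1)\dim_{\mathrm{k}}(V)$, which is the first assertion.

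For the second assertion I would exploit the same chain in the other direction: bounding the left-hand side of $c(\mathcal{H})\le c(W)$ from below by $\dim_{\mathrm{k}}(V)-\lambda\dim_{\mathrm{k}}(\mathcal{H})$ (using $V\subseteq\mathrm{k}\langle\mathcal{H}V\rangle$ because $1\in\mathcal{H}$), one gets $\dim_{\mathrm{k}}(V)-\lambda\dim_{\mathrm{k}}(\mathcal{H})\le(2-\varepsilon-\lambda)\dim_{\mathrm{k}}(V)$, hence $\lambda\dim_{\mathrm{k}}(\mathcal{H})\ge(\lambda-1+\varepsilon)\dim_{\mathrm{k}}(V)=\tfrac{\varepsilon}{2}\dim_{\mathrm{k}}(V)$, that is $\dim_{\mathrm{k}}(V)\le(\tfrac{2}{\varepsilon}-1)\dim_{\mathrm{k}}(\mathcal{H})$. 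Finally, since $\mathcal{H}$ is a fragment, $\dim_{\mathrm{k}}(\mathrm{k}\langle\mathcal{H}V\rangle)=\kappa+\lambda\dim_{\mathrm{k}}(\mathcal{H})\le\lambda\dim_{\mathrm{k}}(V)+\lambda\dim_{\mathrm{k}}(\mathcal{H})$; substituting $\dim_{\mathrm{k}}(V)\le(\tfrac{2}{\varepsilon}-1)\dim_{\mathrm{k}}(\mathcal{H})$ collapses the bound to $\lambda\cdot\tfrac{2}{\varepsilon}\dim_{\mathrm{k}}(\mathcal{H})=(\tfrac{2}{\varepsilon}-1)\dim_{\mathrm{k}}(\mathcal{H})$. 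Since $\mathrm{k}\langle\mathcal{H}V\rangle$ is a left $\mathcal{H}$-module containing $V$, this is exactly the remaining claim.

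The only point requiring foresight is the choice $\lambda=1-\varepsilon/2$: each estimate holds on an interval of admissible $\lambda$, but only this value makes the two constants produced by the arithmetic simultaneously equal to $\tfrac{2}{\varepsilon}-1$. Everything else is routine manipulation of the connectivity inequalities already proved, and the argument uses $\mathcal{H}$ only through $1\in\mathcal{H}$, $\dim_{\mathrm{k}}(\mathcal{H})\ge1$, and its being a finite-dimensional subalgebra, so degenerate cases such as $\mathcal{H}=\mathrm{k}$ require no separate treatment.
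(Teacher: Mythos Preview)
Your proof is correct and follows essentially the same approach as the paper: both choose $\lambda=1-\varepsilon/2$, use $c(\mathcal{H})\le c(W)\le\lambda\dim_{\mathrm{k}}(V)$ together with the general lower bound $c(S)\ge(1-\lambda)\dim_{\mathrm{k}}(S)$, and deduce the two dimension estimates by rearranging. The only cosmetic difference is in the second bound: the paper passes directly from $\dim_{\mathrm{k}}(\mathcal{H}V)\le\lambda\dim_{\mathrm{k}}(V)+\lambda\dim_{\mathrm{k}}(\mathcal{H})$ to the conclusion by substituting $\dim_{\mathrm{k}}(V)\le\dim_{\mathrm{k}}(\mathcal{H}V)$ and solving, whereas you first isolate $\dim_{\mathrm{k}}(V)\le(\tfrac{2}{\varepsilon}-1)\dim_{\mathrm{k}}(\mathcal{H})$ and then substitute; the content is identical.
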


\begin{proof}
We apply linear Hamidoune connectivity with $\lambda=1-\frac{\varepsilon}{2}$.
We have by (\ref{maj_connecti}) $c(S)\geq\frac{\varepsilon}{2}\dim
_{\mathrm{k}}(S)$ for any $\mathrm{k}$-subspace $S$.\ This can be rewritten as%
\begin{equation}
\dim_{\mathrm{k}}(S)\leq\frac{2}{\varepsilon}c(S). \label{maj3}%
\end{equation}
We also get%
\[
c(W):=\dim_{\mathrm{k}}(\mathrm{k}\langle WV \rangle)-(1-\frac{\varepsilon}%
{2})\dim_{\mathrm{k}}(W)\leq(2-\varepsilon)\dim_{\mathrm{k}}(V)-(1-\frac
{\varepsilon}{2})\dim_{\mathrm{k}}(V)=(1-\frac{\varepsilon}{2})\dim
_{\mathrm{k}}(V).
\]
since $\dim_{\mathrm{k}}(WV)\leq(2-\varepsilon)\dim_{\mathrm{k}}(V)$ and
$\dim_{\mathrm{k}}(W)\geq\dim_{\mathrm{k}}(V)$. By Proposition
\ref{Prop_connect}, the unique atom containing $1$ is a subalgebra
$\mathcal{H}$.\ By definition of an atom, we should have%
\[
\kappa=c(\mathcal{H})\leq c(W)\leq(1-\frac{\varepsilon}{2})\dim_{\mathrm{k}%
}(V).
\]
We therefore obtain, by using (\ref{maj3}) with $S=\mathcal{H}$, that%
\[
\dim_{\mathrm{k}}(\mathcal{H})\leq\frac{2}{\varepsilon}c(\mathcal{H})\leq
\frac{2}{\varepsilon}c(W)\leq(\frac{2}{\varepsilon}-1)\dim_{\mathrm{k}}(V).
\]
By using that $c(\mathcal{H})=\dim_{\mathrm{k}}(\mathcal{H}V)-(1-\frac
{\varepsilon}{2})\dim_{\mathrm{k}}(\mathcal{H})$ and the previous inequality
$c(\mathcal{H})\leq(1-\frac{\varepsilon}{2})\dim_{\mathrm{k}}(V)$, we get%
\begin{equation}
\dim_{\mathrm{k}}(\mathcal{H}V)\leq(1-\frac{\varepsilon}{2})\dim_{\mathrm{k}%
}(V)+(1-\frac{\varepsilon}{2})\dim_{\mathrm{k}}(\mathcal{H}). \label{ineq3}%
\end{equation}
We can also bound $\dim_{\mathrm{k}}(V)$ by $\dim_{\mathrm{k}}(\mathcal{H}V)$
in (\ref{ineq3}). This yields
\begin{equation}
\dim_{\mathrm{k}}(\mathcal{H}V)\leq(\frac{2}{\varepsilon}-1)\dim_{\mathrm{k}%
}(\mathcal{H})\text{.} \label{ineq4}%
\end{equation}

\end{proof}

\pagebreak

\begin{remark}
\ 

\begin{enumerate}
\item When $\dim_{\mathrm{k}}(V^{2})\leq(2-\varepsilon)\dim_{\mathrm{k}}(V)$,
we can apply Theorem \ref{Th_linTao} with $V=W$ and obtain that $V\subset
\mathcal{H}V$ with
\[
\dim_{\mathrm{k}}(\mathcal{H})\leq(\frac{2}{\varepsilon}-1)\dim_{\mathrm{k}%
}(V)\quad\text{ and }\quad\dim_{\mathrm{k}}(\mathcal{H}V)\leq(\frac
{2}{\varepsilon}-1)\dim_{\mathrm{k}}(\mathcal{H}).
\]

\item When $\mathcal{A}$ has no non trivial f.d.\negthinspace\negthinspace
\ subalgebra and $\dim_{\mathrm{k}}(V^{2})\leq(2-\varepsilon)\dim_{\mathrm{k}%
}(V)$, we have $\mathcal{H}=\mathrm{k}$. So
\[
\frac{1}{\frac{2}{\varepsilon}-1}\leq\dim_{\mathrm{k}}(V)\leq\frac
{2}{\varepsilon}-1.
\]

\end{enumerate}
\end{remark}

\section{Groups setting}

\subsection{Recovering results in the group setting}

The aim of this paragraph is to explain how Theorem~\ref{Th_KL_Acom} permits
to recover Diderrich's theorem for groups.
%Observe first that in the case of abelian groups, we can
%restrict to the finitely generated case since $AB$ is a subset of $\langle
%A\cup B\rangle$ the subgroup generated by $A$ and $B$.\
The proof goes through three steps. First we turn the group $\mathbb{G}$ into
the group algebra $\mathrm{k}[\mathbb{G}]$. Second, we link the stabilizer in
$\mathbb{G}$ of the subset $A$ with the stabilizer in $\mathrm{k}[\mathbb{G}]$
of the subspace $\mathrm{k}\langle A\rangle$. Third we choose a convenient
field (the field $\mathbb{C}$ of complex numbers) so that the subalgebra
generated by $\mathrm{k}\langle A\rangle$ has only a finite number of subalgebras.

Let us now detail these ideas. First, the group algebra $\mathrm{k}%
[\mathbb{G}]$ is the $\mathrm{k}$-vector space with basis $\{e_{g}\mid
g\in\mathbb{G}\}$ and multiplication defined by $e_{g}\cdot e_{g^{\prime}%
}=e_{gg^{\prime}}$ for any $g,g^{\prime}$ in $\mathbb{G}$.\
%We now explain how Theorem \ref{Th_Lin_Kne_n} applied in
%$\mathrm{k}[G]$ permits to recover Diderrich's theorem (and thus also Kneser's
%theorem) in $G$.\
Given any nonempty set $A$ in $\mathbb{G}$, we define its associated set in
$\mathrm{k}[\mathbb{G}]$ as $\overline{A}=\{e_{a}\mid a\in A\}$. It is clear
that $A$ is a commutative set in $\mathbb{G}$ if and only if $\overline{A}$ is
a commutative set in $\mathrm{k}[\mathbb{G}]$.\ In that case, the subalgebra
$\mathbb{A}(\overline{A})$ is a finite-dimensional commutative algebra
isomorphic to $\mathrm{k}[\mathbb{G}(A)]$ the group algebra of the subgroup
$\mathbb{G}(A)$ of $\mathbb{G}$ generated by the elements of $A$. Moreover,
write%
\[
H=\{h\in\mathbb{G}\mid hA=A\}\quad\text{ and }\quad\mathcal{H}_{l}%
=\{x\in\mathrm{k}[\mathbb{G}]\mid x\overline{A}\subset\mathrm{k}%
\langle\overline{A}\rangle\}
\]
for the left stabilizer of $A$ in $\mathbb{G}$ and the left stabilizer of
$\mathrm{k}\langle\overline{A}\rangle$ in $\mathrm{k}[\mathbb{G}]$, respectively.

\begin{lemma}
\label{Lem_stab}We have $\mathcal{H}_{l}=\mathrm{k}\langle\overline{H}%
\rangle=\mathrm{k}[H]$ that is, $\mathcal{H}_{l}$ is the group algebra of the
group $H$.
\end{lemma}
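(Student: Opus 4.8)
The plan is to prove the two inclusions $\mathrm{k}[H]\subseteq\mathcal{H}_l$ and $\mathcal{H}_l\subseteq\mathrm{k}[H]$ separately, using the natural $\mathbb{G}$-grading of the group algebra $\mathrm{k}[\mathbb{G}]$ as the main organizing tool. The easy direction is the first: if $h\in H$, then $h\overline{A}=\overline{hA}=\overline{A}$, so $e_h\,\mathrm{k}\langle\overline{A}\rangle=\mathrm{k}\langle\overline{A}\rangle$, hence $e_h\in\mathcal{H}_l$; since $\mathcal{H}_l$ is a $\mathrm{k}$-subspace (indeed a subalgebra), $\mathrm{k}\langle\overline{H}\rangle=\mathrm{k}[H]\subseteq\mathcal{H}_l$. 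Note also that $\mathrm{k}\langle\overline{H}\rangle=\mathrm{k}[H]$ genuinely holds as the group algebra of $H$ because $H$ is a subgroup of $\mathbb{G}$ (a standard fact: $H=HA A^{-1}\cap\dots$, or more simply $1\in A A^{-1}$ issues aside — $H$ being a stabilizer of a finite set under left multiplication is a subgroup).

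For the reverse inclusion, first I would observe that $\mathcal{H}_l$ is a \emph{graded} subspace of $\mathrm{k}[\mathbb{G}]$ with respect to the $\mathbb{G}$-grading, i.e.\ if $x=\sum_{g}\lambda_g e_g\in\mathcal{H}_l$ then each $\lambda_g e_g\in\mathcal{H}_l$. This is because $\mathcal{H}_l$ is stable under left multiplication by the invertible elements $e_{g}$ (which permute the basis): for any $a_0\in A$, the condition $x\overline{A}\subseteq\mathrm{k}\langle\overline{A}\rangle$ already forces $x e_{a_0}\in\mathrm{k}\langle\overline{A}\rangle$; translating the finitely many homogeneous components of $x$ by enough group elements and comparing supports, one isolates each component. (Concretely: for $x=\sum\lambda_g e_g$ and a fixed $a_0\in A$, the element $xe_{a_0}=\sum\lambda_g e_{ga_0}$ must lie in $\mathrm{k}\langle\overline{A}\rangle=\bigoplus_{a\in A}\mathrm{k}e_a$, which says $ga_0\in A$ whenever $\lambda_g\neq0$.) Actually this last remark gives even more directly what we want, so let me restructure: take $x=\sum_{g\in S}\lambda_g e_g\in\mathcal{H}_l$ with all $\lambda_g\neq 0$ and $S$ finite; I want to show $S\subseteq H$.

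So the core step is: fix any $a\in A$; then $x e_a=\sum_{g\in S}\lambda_g e_{ga}\in\mathrm{k}\langle\overline{A}\rangle$ forces $ga\in A$ for every $g\in S$, i.e.\ $gA\subseteq A$ for every $g\in S$. Since $A$ is finite and left multiplication by $g$ is injective, $gA\subseteq A$ implies $gA=A$, so $g\in H$. Hence $S\subseteq H$ and $x\in\mathrm{k}[H]$. Combined with the first paragraph this gives $\mathcal{H}_l=\mathrm{k}[H]=\mathrm{k}\langle\overline{H}\rangle$. I do not expect any real obstacle here; the only point needing a line of care is the passage from $gA\subseteq A$ to $gA=A$ (finiteness of $A$) and, if one wants to be thorough, the remark that $H$ is a subgroup so that $\mathrm{k}\langle\overline{H}\rangle$ is literally a group algebra — but both are standard and the whole proof is a few lines.
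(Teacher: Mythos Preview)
Your proof is correct and follows essentially the same approach as the paper's: both argue the nontrivial inclusion by taking $x=\sum_g\lambda_g e_g\in\mathcal{H}_l$, multiplying by $e_a$ for $a\in A$, and using that the $e_{ga}$ are distinct basis vectors (injectivity of left multiplication in a group) to read off constraints on the support. Your version is slightly more streamlined in that you show directly $gA\subseteq A$ (hence $gA=A$ by finiteness) for every $g$ in the support, whereas the paper first subtracts off the $\mathrm{k}[H]$-part and then, for each $g\notin H$, picks a witness $a$ with $ga\notin A$ to force $\lambda_g=0$; but this is the same argument reorganized.
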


\begin{proof}
The inclusion $\mathcal{H}_{l}\supset\mathrm{k}\langle\overline{H}\rangle$ is
immediate. For the converse, observe first that for any $g\notin H$, there
exists $a_{g}$ in $A$ such that $ga_{g}\notin A$.\ Consider $x=\sum_{g\in
G}\lambda_{g}e_{g}$ in $\mathcal{H}_{l}$ (where the coefficients $\lambda_{g}$
are all but a finite number equal to zero when $\mathbb{G}$ is infinite).
Since $\mathcal{H}_{l}\supset\mathrm{k}\langle\overline{H}\rangle$, we may
assume that $\lambda_{g}=0$ for all~$g \in H$ and write $x=\sum_{g\notin
H}\lambda_{g}e_{g}$. Our aim is to show that $\lambda_{g}=0$ for all~$g \notin
H$. For such a $g$, there exists $a \in A$ such that $ga \notin A$. Moreover,
since $x \in\mathcal{H}_{l}$ and $a \in A$, we have $xe_{a} \in\mathrm{k}%
[\overline{A}]$. Finally, we get%

\[
\sum_{g^{\prime}\in A}\mu_{g^{\prime}}e_{g^{\prime}}=xe_{a}=\sum_{g^{\prime
}\notin H}\lambda_{g^{\prime}}e_{g^{\prime}}e_{a}=\sum_{g^{\prime}\notin
H}\lambda_{g^{\prime}}e_{g^{\prime}a}%
\]
Since the family $\{e_{g^{\prime}},g^{\prime}\in\mathbb{G}\}$ is a basis for
$\mathrm{k}[\mathbb{G}]$, we get $\lambda_{g}=0$ comparing the coefficient of
$e_{ga}$ in the left and right hand side of the previous equality.
%Then
%$x\overline{A}\subset\mathrm{k}\langle\overline{A}\rangle$. Therefore we have
%also $y\overline{A}\subset\mathrm{k}\langle\overline{A}\rangle$ with
%$y=\sum_{g\in G\setminus H}\lambda_{g}e_{g}$ since $e_{h}\overline{A}%
%\subset\mathrm{k}\langle\overline{A}\rangle$ for any $h\in H$.\ If $y=0$, we
%are done. So assume $y\neq0$ and choose $g_{0}\in G\setminus H$.\ We have
%\begin{equation}
%ye_{a_{g_{0}}}=\lambda_{g_{0}}e_{g_{0}a_{g_{0}}}+\sum_{g\in G\setminus H,g\neq
%g_{0}}\lambda_{g}e_{ga_{g_{0}}}.\label{eqgroup}%
%\end{equation}
%But $e_{g_{0}a_{g_{0}}}\notin\mathrm{k}\langle\overline{A}\rangle$ since
%$g_{0}a_{g_{0}}\notin A$.\ Here we use that $e_{g_{0}a_{g_{0}}}$ cannot be a
%linear combination of the vectors $e_{g},$ $g\in A$ since it is a vector of
%the standard basis of $\mathrm{k}[G]$.\ Now we have $ye_{a_{g_{0}}}%
%\in\mathrm{k}\langle\overline{A}\rangle$.\ Since the vectors $e_{ga_{g_{0}}%
%},g\in G$ are linearly independent (because the multiplication by $a_{g_{0}}$
%is a bijective map in $G$), we must have $\lambda_{g_{0}}=0$. This is true for
%any $g_{0}\in G\setminus H$ which proves that $y=0$, thus a contradiction.
\end{proof}

\bigskip

To obtain Diderrich's theorem for groups from Theorem~\ref{Th_KL_Acom} and
Lemma \ref{Lem_stab}, we have to find a field such that $\mathbb{A}%
(\overline{A})=\mathrm{k}[\mathbb{G}(A)]$ admits a finite number of
subalgebras containing $1$ and $\mathrm{k}[\mathbb{G}]$ verifies~\textbf{H}%
$_{\mathrm{s}}$. These two points relies on the two following lemmas.

%To obtain Kneser's theorem for abelian groups from Theorem~\ref{Th_KL_Acom}
%and Lemma~\ref{Lem_stab}, we have to find a field such that $\mathrm{k}%
%[\mathbb{G}]$ is a product of integral algebras and such that $\mathrm{k}%
%[\mathbb{G}(A)]$ admits a finite number of subalgebras containing $1$. Since
%$G$ may be replaced by a finitely generated abelian group, this relies on the
%following lemma.

\begin{lemma}
\label{Lem_alg_gr} Let $\mathbb{G}$ be a finitely generated commutative group.
Then
\[
\mathbb{C}[\mathbb{G}]\mathbb{\simeq C}[{X_{1}}^{\pm1},\ldots,{X_{r}}^{\pm
1}]^{m}%
\]
thus is a product of integral algebras and has a finite number of finite
dimensional subalgebras.
\end{lemma}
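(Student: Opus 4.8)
The plan is to reduce everything to the structure theorem for finitely generated abelian groups together with the behaviour of the group‑algebra functor on direct products. First I would write $\mathbb{G}\cong\mathbb{Z}^{r}\times T$, where $T$ is the (finite) torsion subgroup, of order $m$. Using $\mathrm{k}[G\times G']\cong\mathrm{k}[G]\otimes_{\mathrm{k}}\mathrm{k}[G']$ and induction, this gives $\mathbb{C}[\mathbb{G}]\cong\mathbb{C}[\mathbb{Z}]^{\otimes r}\otimes_{\mathbb{C}}\mathbb{C}[T]$. Here $\mathbb{C}[\mathbb{Z}]\cong\mathbb{C}[X,X^{-1}]$ (send a generator of $\mathbb{Z}$ to $X$), so $\mathbb{C}[\mathbb{Z}]^{\otimes r}\cong\mathbb{C}[X_{1}^{\pm1},\ldots,X_{r}^{\pm1}]=:R$. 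For the torsion part, writing $T\cong\prod_{j}\mathbb{Z}/n_{j}$ one has $\mathbb{C}[\mathbb{Z}/n]\cong\mathbb{C}[X]/(X^{n}-1)$, and since over the algebraically closed field $\mathbb{C}$ the polynomial $X^{n}-1$ is a product of $n$ distinct linear factors, the Chinese Remainder Theorem yields $\mathbb{C}[\mathbb{Z}/n]\cong\mathbb{C}^{n}$; multiplying these, $\mathbb{C}[T]\cong\mathbb{C}^{m}$. Since tensor product commutes with finite products, $\mathbb{C}[\mathbb{G}]\cong R\otimes_{\mathbb{C}}\mathbb{C}^{m}\cong R^{m}$, which is the claimed isomorphism.

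Next I would observe that each factor $R=\mathbb{C}[X_{1}^{\pm1},\ldots,X_{r}^{\pm1}]$ is an integral domain: it is the localization of the polynomial ring $\mathbb{C}[X_{1},\ldots,X_{r}]$ at the multiplicative set generated by $X_{1},\ldots,X_{r}$ (equivalently, a subring of the fraction field $\mathbb{C}(X_{1},\ldots,X_{r})$), and a subring of a field is a domain. Thus $\mathbb{C}[\mathbb{G}]$ is a finite product of integral $\mathbb{C}$-algebras.

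Finally, to see that $\mathcal{A}:=R^{m}$ has only finitely many finite-dimensional subalgebras I would use the following elementary remark: a commutative integral domain $D$ containing $\mathbb{C}$ has no finite-dimensional subalgebra other than $\mathbb{C}$. Indeed, if $\mathcal{C}\subseteq D$ is finite-dimensional and $d\in\mathcal{C}$, then $\mathbb{C}[d]\subseteq\mathcal{C}$ is finite-dimensional, so the minimal polynomial $\mu_{d}$ of $d$ exists and $\mathbb{C}[d]\cong\mathbb{C}[T]/(\mu_{d})$; since this is a subring of the domain $D$ it is itself a domain, so $\mu_{d}$ is irreducible, hence linear because $\mathbb{C}$ is algebraically closed, and therefore $d\in\mathbb{C}$. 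Now let $\mathcal{B}$ be a finite-dimensional subalgebra of $\mathcal{A}$ and $\pi_{i}\colon\mathcal{A}\to R$ the $i$-th projection. Then $\pi_{i}(\mathcal{B})$ is a unital subalgebra of $R$ of dimension $\le\dim_{\mathbb{C}}\mathcal{B}<\infty$, hence equals $\mathbb{C}$ by the remark. Consequently $\mathcal{B}\subseteq\mathbb{C}^{m}\subseteq\mathcal{A}$, and since $\mathbb{C}^{m}$ is finite-dimensional it has only finitely many subalgebras (for instance by Theorem~\ref{TH_classification}, or directly the ones indexed by the partitions of $\{1,\ldots,m\}$). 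Hence $\mathcal{A}$ has finitely many finite-dimensional subalgebras.

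The routine part is the computation of $\mathbb{C}[\mathbb{G}]$; the only point needing a little care is the last one, where one must notice that finite-dimensionality forces a subalgebra of $R^{m}$ to be integral over $\mathbb{C}$ in each coordinate and hence to collapse into the constant subalgebra $\mathbb{C}^{m}$.
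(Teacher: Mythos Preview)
Your argument is correct and follows essentially the same route as the paper: decompose $\mathbb{G}\cong\mathbb{Z}^{r}\times T$, compute $\mathbb{C}[\mathbb{G}]\cong R^{m}$ with $R=\mathbb{C}[X_{1}^{\pm1},\ldots,X_{r}^{\pm1}]$, and then observe that every finite-dimensional subalgebra of $R^{m}$ must project into $\mathbb{C}$ in each factor and hence lie in $\mathbb{C}^{m}$. The only differences are cosmetic---you obtain $\mathbb{C}[T]\cong\mathbb{C}^{m}$ via the Chinese Remainder Theorem on each cyclic factor rather than via Maschke/Artin--Wedderburn, and you spell out the minimal-polynomial argument for why a finite-dimensional subalgebra of a $\mathbb{C}$-domain collapses to $\mathbb{C}$, a step the paper merely asserts.
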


\begin{proof}
This is standard representation theory. We may write $\mathbb{G}%
\simeq\mathbb{Z}^{r}\times\mathbb{G}^{\prime}$ with $\mathbb{G}^{\prime}$ a finite
group of order $m$. We then have $\mathbb{C}[\mathbb{G}]\simeq\mathbb{C}%
[\mathbb{Z}^{r}]\otimes\mathbb{C}[\mathbb{G^{\prime}}]$.

The algebra $\mathbb{C}[\mathbb{G^{\prime}}]$ is semisimple and then
isomorphic to a product of $\ell$ matrix algebras where $\ell$ is the number
of conjugacy classes in $\mathbb{G^{\prime}}$. So $\ell=|\mathbb{G^{\prime}%
}|=m$ since $\mathbb{G^{\prime}}$ is commutative. A dimension argument (or
commutation argument) show that all the matrix algebras have to be of
dimension~$1$. Finally $\mathbb{C}[\mathbb{G^{\prime}}] \simeq\mathbb{C}^{m}$.

Moreover, we also have $\mathbb{C}[\mathbb{Z}^{r}]\mathbb{\simeq C}[{X_{1}%
}^{\pm1},\ldots,{X_{r}}^{\pm1}]$ whose unique finite-dimensional subalgebra is
$\mathbb{C}$.

Finally, we obtain $\mathbb{C}[\mathbb{G}]\simeq\mathbb{C}[{X_{1}}^{\pm
1},\ldots,{X_{r}}^{\pm1}]^{m}$. This implies that the finite-dimensional
subalgebras of $\mathbb{C}[\mathbb{G}]$ are the subalgebra of $\mathbb{C}^{m}%
$.\ There thus exists only finitely many such subalgebras.
\end{proof}

\begin{lemma}
\label{Lem_alg_gr_banach} Let $\mathbb{G}$ be a finitely generated group. Then
$\mathbb{C}[\mathbb{G}]$ can be identified with a subalgebra of a Banach
algebra over $\mathbb{C}$.
\end{lemma}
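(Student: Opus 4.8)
The plan is to realize $\mathbb{C}[\mathbb{G}]$ as a subalgebra of the Banach algebra $\ell^{1}(\mathbb{G})$ of absolutely summable functions $f\colon\mathbb{G}\to\mathbb{C}$, equipped with the convolution product. First I would recall the construction: for $f,f'\in\ell^{1}(\mathbb{G})$ set $(f\ast f')(g)=\sum_{h\in\mathbb{G}}f(h)f'(h^{-1}g)$ and $\|f\|_{1}=\sum_{g\in\mathbb{G}}|f(g)|$. A standard interchange-of-summation (Fubini/rearrangement) argument shows that the defining series converges absolutely, that $\|f\ast f'\|_{1}\le\|f\|_{1}\|f'\|_{1}$, and that $\ast$ is associative (using associativity in $\mathbb{G}$); since $\ell^{1}(\mathbb{G})$ is complete for $\|\cdot\|_{1}$, the triple $(\ell^{1}(\mathbb{G}),\ast,\|\cdot\|_{1})$ is a Banach algebra over $\mathbb{C}$, with unit $\delta_{e}$, the indicator function of the neutral element $e$ of $\mathbb{G}$.

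Next I would exhibit the embedding. Send $e_{g}\in\mathbb{C}[\mathbb{G}]$ to $\delta_{g}\in\ell^{1}(\mathbb{G})$, the indicator of $g$, and extend $\mathbb{C}$-linearly. Since $\delta_{g}\ast\delta_{g'}=\delta_{gg'}$, this map is an algebra morphism sending $1=e_{e}$ to the unit $\delta_{e}$, and it is injective because the family $\{\delta_{g}\mid g\in\mathbb{G}\}$ is linearly independent in $\ell^{1}(\mathbb{G})$. Its image is precisely the subalgebra of finitely supported functions; hence $\mathbb{C}[\mathbb{G}]$ is isomorphic to a (unital) subalgebra of the Banach algebra $\ell^{1}(\mathbb{G})$, as claimed.

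As an alternative route, one may compose the left regular representation $\mathbb{G}\to U(B(\ell^{2}(\mathbb{G})))$, $g\mapsto\bigl(\delta_{h}\mapsto\delta_{gh}\bigr)$, with $\mathbb{C}$-linear extension to obtain an injective unital morphism $\mathbb{C}[\mathbb{G}]\to B(\ell^{2}(\mathbb{G}))$ into the Banach algebra of bounded operators on $\ell^{2}(\mathbb{G})$; injectivity again follows by evaluating at $\delta_{e}$. Either construction works, and neither actually uses finite generation of $\mathbb{G}$ — that hypothesis is retained only because it is all that the application requires. The sole mildly technical point is the verification that convolution is well defined and submultiplicative on $\ell^{1}(\mathbb{G})$, which is the routine rearrangement computation; I expect no genuine obstacle here.
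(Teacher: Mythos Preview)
Your proposal is correct and essentially the same as the paper's proof: the paper equips $\mathbb{C}[\mathbb{G}]$ with the norm $\|\sum_{g}\lambda_{g}e_{g}\|=\sum_{g}|\lambda_{g}|$, checks submultiplicativity, and passes to the completion, which is precisely your $\ell^{1}(\mathbb{G})$. Your observation that finite generation is not actually used is also accurate, and your alternative via $B(\ell^{2}(\mathbb{G}))$ is a valid extra route not mentioned in the paper.
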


\begin{proof}
Let us consider on $\mathbb{C}[\mathbb{G}]$ the norm defined by
\[
\|\sum_{g \in\mathbb{G}} \lambda_{g} e_{g} \|= \sum_{g \in\mathbb{G}}
|\lambda_{g}|\,.
\]
For $x,y \in\mathbb{C}[\mathbb{G}]$, we have $\|xy\| \leq\|x \| \|y \|$. The
completion of $\mathbb{C}[\mathbb{G}]$ will then be a Banach algebra.
\end{proof}

\begin{corollary}
[Diderrich's theorem for groups]Consider $A$ and $B$ be two finite nonempty
subsets of a group $\mathbb{G}$. Assume that $A$ is commutative. Let
$H:=\{g\in G\mid gAB=AB\}.$ Then%
\[
\left\vert AB\right\vert \geq\left\vert A\right\vert +\left\vert B\right\vert
-\left\vert H\right\vert .
\]

\end{corollary}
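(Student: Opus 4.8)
The plan is to deduce Diderrich's theorem for groups directly from Theorem~\ref{Th_KL_Acom} applied to the complex group algebra $\mathbb{C}[\mathbb{G}]$, using the three preparatory lemmas. First I would pass from the subsets $A,B\subset\mathbb{G}$ to their associated subsets $\overline{A}=\{e_a\mid a\in A\}$ and $\overline{B}=\{e_b\mid b\in B\}$ in $\mathcal{A}=\mathbb{C}[\mathbb{G}]$. The key elementary observations are that $\dim_{\mathbb{C}}(\overline{A})=\left\vert A\right\vert$, $\dim_{\mathbb{C}}(\overline{B})=\left\vert B\right\vert$ (the $e_g$ being linearly independent), and $\dim_{\mathbb{C}}(\overline{A}\,\overline{B})=\left\vert AB\right\vert$, since $\overline{A}\,\overline{B}=\{e_{ab}\mid a\in A, b\in B\}$ spans $\mathbb{C}\langle\{e_c\mid c\in AB\}\rangle$. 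Also each $e_a$ is invertible in $\mathbb{C}[\mathbb{G}]$ with inverse $e_{a^{-1}}$, so $\mathbb{C}\langle\overline{A}\rangle$ and $\mathbb{C}\langle\overline{B}\rangle$ meet $U(\mathcal{A})$.

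Second, I would check the hypotheses of Theorem~\ref{Th_KL_Acom}. Commutativity of $A$ in $\mathbb{G}$ is equivalent to commutativity of $\overline{A}$ in $\mathbb{C}[\mathbb{G}]$, so $\overline{A}$ is a commutative subset. Restricting to the subgroup $\mathbb{G}(A)$ generated by $A$, which is finitely generated (as $A$ is finite) and commutative, we have $\mathbb{A}(\overline{A})\simeq\mathbb{C}[\mathbb{G}(A)]$; Lemma~\ref{Lem_alg_gr} then guarantees this algebra has only finitely many finite-dimensional subalgebras. Since $\mathbb{G}$ itself need not be finitely generated, I would note that we may replace $\mathbb{G}$ by the finitely generated subgroup generated by $A\cup B$ without changing any of $\left\vert A\right\vert$, $\left\vert B\right\vert$, $\left\vert AB\right\vert$ or $\left\vert H\right\vert$ (the latter because $H\subseteq (AB)(AB)^{-1}$ lies in that subgroup); then Lemma~\ref{Lem_alg_gr_banach} shows $\mathbb{C}[\mathbb{G}]$ is a subalgebra of a Banach algebra, hence satisfies \textbf{H}$_{\mathrm{w}}$.

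Third, I would apply Theorem~\ref{Th_KL_Acom}(1) to $\overline{A}$ and $\overline{B}$ to obtain
\[
\left\vert AB\right\vert=\dim_{\mathbb{C}}(\overline{A}\,\overline{B})\geq\dim_{\mathbb{C}}(\overline{A})+\dim_{\mathbb{C}}(\overline{B})-\dim_{\mathbb{C}}(\mathcal{H}_l(\overline{A}\,\overline{B}))=\left\vert A\right\vert+\left\vert B\right\vert-\dim_{\mathbb{C}}(\mathcal{H}_l(\overline{A}\,\overline{B})).
\]
The final point is to identify the stabilizer algebra: by Lemma~\ref{Lem_stab} applied to the subset $AB$ of $\mathbb{G}$ (whose group-level stabilizer is exactly $H=\{g\mid gAB=AB\}$), we get $\mathcal{H}_l(\overline{A}\,\overline{B})=\mathbb{C}[H]$, so $\dim_{\mathbb{C}}(\mathcal{H}_l(\overline{A}\,\overline{B}))=\left\vert H\right\vert$. (Here one uses that $\mathbb{C}\langle\overline{A}\,\overline{B}\rangle=\mathbb{C}\langle\overline{AB}\rangle$, so its stabilizer is computed by Lemma~\ref{Lem_stab} with the set $AB$ in place of $A$.) Substituting gives the claimed inequality $\left\vert AB\right\vert\geq\left\vert A\right\vert+\left\vert B\right\vert-\left\vert H\right\vert$.

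The steps are all essentially routine translations; the only point requiring a little care — and the place I would expect a referee to look hardest — is the reduction to a finitely generated group and the verification that $H$, $\left\vert AB\right\vert$ etc.\ are unaffected, together with the clean application of Lemma~\ref{Lem_stab} to the \emph{product} set $AB$ rather than to $A$ itself. Everything else follows by packaging Lemmas~\ref{Lem_alg_gr}, \ref{Lem_alg_gr_banach} and \ref{Lem_stab} into the hypotheses of Theorem~\ref{Th_KL_Acom}.
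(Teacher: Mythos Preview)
Your proposal is correct and follows essentially the same approach as the paper: reduce to the finitely generated subgroup $\langle A\cup B\rangle$, use Lemma~\ref{Lem_alg_gr_banach} to get \textbf{H}$_{\mathrm{w}}$, Lemma~\ref{Lem_alg_gr} for the finiteness of subalgebras of $\mathbb{A}(\overline{A})\simeq\mathbb{C}[\mathbb{G}(A)]$, apply Theorem~\ref{Th_KL_Acom}, and identify the stabilizer via Lemma~\ref{Lem_stab}. Your extra care in noting that $H\subseteq (AB)(AB)^{-1}$ lies in the finitely generated subgroup and that Lemma~\ref{Lem_stab} is applied to the product set $AB$ (via $\mathbb{C}\langle\overline{A}\,\overline{B}\rangle=\mathbb{C}\langle\overline{AB}\rangle$) makes explicit points the paper leaves implicit.
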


\begin{proof}
Since $AB$ belongs to the subgroup of $\mathbb{G}$ generated by the finite
sets $A$ and $B$, we can assume that $G$ is finitely generated.
Lemma~\ref{Lem_alg_gr_banach} then shows that $\mathbb{C}[\mathbb{G}]$
satisfies \textbf{H}$_{\mathrm{w}}$. We then apply Theorem \ref{Th_KL_Acom} to
$\overline{A}$ and $\overline{B}$ which consists of invertible elements in
$\mathbb{C}[\mathbb{G}]$ .\ We have $\left\vert A\right\vert =\dim
_{\mathbb{C}}\overline{A},$ $\left\vert B\right\vert =\dim_{\mathbb{C}%
}\overline{B}$ and by Lemma \ref{Lem_stab}, we have $\left\vert H\right\vert
=\dim_{\mathbb{C}}\overline{H}=\dim_{\mathbb{C}}\mathcal{H}$ where
$\mathcal{H}=\{x\in\mathbb{C}[G]\mid x\langle\overline{A}\,\overline{B}%
\rangle=\langle\overline{A}\,\overline{B}\rangle\}$. Since $\mathbb{C}%
[\mathbb{G}(A)]=\mathbb{A}[\overline{A}]$ admits a finite number of
finite-dimensional subalgebras by Lemma \ref{Lem_alg_gr}, we are done.
\end{proof}

\bigskip

%\begin{remark}
%Observe we can not deduce Diderrich's theorem for infinite groups from Theorem
%\ref{Th_KL_Acom} because their group algebras do not satisfy \textbf{H}%
%$_{\mathrm{w}}$ in general.
%\end{remark}

\begin{remark}
Observe that in the case of a commutative group $\mathbb{G}$,
Lemma~\ref{Lem_alg_gr_banach} is not necessary. Indeed, we may consider the
commutative finitely generated group $\langle A \cup B \rangle$ whose group
algebra verifies~\textbf{H}$_{\mathrm{w}}$ by Lemma~\ref{Lem_alg_gr}.
\end{remark}

%\begin{corollary}
%[Kneser's theorem]Consider $A$ and $B$ be two finite nonempty subsets of an
%abelian group $G$. Let $H:=\{g\in G\mid gAB=AB\}.$ Then%
%\[
%\left\vert AB\right\vert \geq\left\vert A\right\vert +\left\vert B\right\vert
%-\left\vert H\right\vert .
%\]
%\end{corollary}

%\begin{proof}
%We apply Theorem \ref{Th_KL_Acom} to $\overline{A}$ and $\overline{B}$ in the
%algebra $\mathbb{C}[\langle A\cup B\rangle]$ which is a product of integral
%algebras (Lemma~\ref{Lem_alg_gr}) and thus verifies \textbf{H}$_{\mathrm{w}}%
%$.\ We have $\left\vert A\right\vert =\dim_{\mathrm{k}}\overline{A},$
%$\left\vert B\right\vert =\dim_{\mathrm{k}}\overline{B}$ and by Lemma
%\ref{Lem_stab}, we have $\left\vert H\right\vert =\dim_{\mathrm{k}}%
%\overline{H}=\dim_{\mathrm{k}}\mathcal{H}$ where $\mathcal{H}=\{x\in
%\mathbb{C}[G]\mid x\langle\overline{A}\,\overline{B}\rangle=\langle
%\overline{A}\,\overline{B}\rangle\}$. Since $\mathbb{C}[\mathbb{G}%
%(A)]=\mathbb{A}[\overline{A}]$ admits a finite number of finite-dimensional
%subalgebras by Lemma \ref{Lem_alg_gr}, we are done.
%\end{proof}

\medskip

\begin{remark}
Contrary to \cite{Xian}, we recover here the results for the groups without
using any Galois correspondence arguments which would become problematic in
the noncommutative case.
\end{remark}

\section{Monoid setting}

Let $M$ be a multiplicative monoid with neutral element $1$. Its set of
invertible elements is defined as%
\[
U(M)=\{x\in M\mid\exists y\in M,\ \ xy=yx=1\}\text{.}%
\]
We denote by $\mathbb{C}[M]$ its monoid algebra over $\mathbb{C}$. Given a
nonempty set $A$ in $G$, we define $\overline{A}=\{e_{a}\mid a\in A\}$ as in
the case of a group algebra. Moreover, we also write%
\[
H_A=\{h\in M\mid hA=A\}\quad\text{ and }\quad\mathcal{H}_{l}(A)=\{x\in
\mathrm{k}[M]\mid x\overline{A}\subset\mathrm{k}\langle\overline{A}\rangle\}
\]
for the left stabilizer of $A$ in $M$ and the left stabilizer of
$\mathrm{k}\langle\overline{A}\rangle$ in $\mathrm{k}[M]$, respectively. It is
clear that $H_A$ is a submonoid of $M$ and $\mathcal{H}_{l}(A)$ a subalgebra of
$\mathrm{k}[M]$. Nevertheless, Lemma \ref{Lem_stab} does not hold in general
when $M$ is not a group as illustrated by the following example.

\begin{example}
Consider $M$ defined as the quotient of the free monoid $\{a,b\}^{\ast}$ (with
neutral element the empty word) by the relations%
\[
a^{2}=b^{2}=ab=ba.
\]
Given $x\in M$, let $\ell(x)$ be the common length of the words of $x$
regarded as a class in $\{a,b\}^{\ast}$. Then $\ell(xy)=\ell(x)+\ell(y)$ for
any $x,y$ in $M$. For $A=\{1,a,b\}$, we thus have $H_A=\{1\}$. Nevertheless, the
subalgebra $\mathcal{H}_{l}(A)$ is not reduced to $\mathbb{C}$. One easily
verifies that it coincides with the $2$-dimensional subalgebra $\mathcal{H}%
_{l}(A)=\mathbb{C}\oplus\mathbb{C}x$ generated by $x=a-b$ with $x^{2}=0$.
\end{example}

\subsection{Finite monoids}

The Kneser theorem for abelian groups becomes false in commutative finite
monoids even if we assume the subsets considered intersect non trivially the
set of invertible elements. To see this, define a monoid $M$ as the quotient
of the free monoid $\{a,b\}^{\ast}$ (with neutral element the empty word) by
the relations%
\begin{equation}
a^{2}=b^{2}=ab=ba\text{ and }a^{4}=a.\label{finiteM}%
\end{equation}
Then $M=\{1,a,b,a^{2},a^{3}\}$ is finite.\ For $A=B=\{1,a,b\}$, we have yet
$A^{2}=\{1,a,b,a^{2}\}$ and $H_{AB}=\{1\}$ whereas%
\[
4=\left\vert A^{2}\right\vert \ngeqslant2\left\vert A\right\vert -\left\vert
H_{AB}\right\vert =5.
\]
It is nevertheless possible to obtain a Hamidoune type theorem from our
algebra setting.

\begin{theorem}
Let $M$ be a finite monoid and $A$ a finite subset in $M$ satisfying $A\cap
U(M)\neq\emptyset$. Then, for any $0<\lambda\leq1$, the subalgebra
$\mathcal{H}_{\lambda}$ of $\mathbb{C}[M]$ which is the unique atom containing
$1$ contains $\mathcal{H}_{l}(A)$ and verifies
\[
\left\vert BA\right\vert \geq\lambda\left\vert A\right\vert +\left\vert
B\right\vert -\lambda\dim_{\mathbb{C}}(\mathcal{H}_{\lambda})\quad\text{ and
}\quad\dim_{\mathbb{C}}(\mathcal{H}_{\lambda})\geq\left\vert H_A\right\vert
\]
for any finite subset $B$ in $M$ such that $B\cap U(M)\neq\emptyset$. In
particular $\mathcal{H}_{1}$ verifies%
\[
\left\vert BA\right\vert \geq\left\vert A\right\vert +\left\vert B\right\vert
-\dim_{\mathbb{C}}(\mathcal{H}_{1})\quad\text{ and }\quad\dim_{\mathbb{C}%
}(\mathcal{H}_{1})\geq\left\vert H_A\right\vert \text{.}%
\]
\end{theorem}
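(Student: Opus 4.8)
The plan is to transfer the statement to the monoid algebra $\mathbb{C}[M]$ and invoke the linear Hamidoune connectivity of Section~5. Since $M$ is finite, $\mathbb{C}[M]$ is a finite-dimensional $\mathbb{C}$-algebra, hence satisfies \textbf{H}$_{\mathrm{s}}$, so Proposition~\ref{Prop_base_inv} and Proposition~\ref{Prop_connect} apply. The first thing I would record is the elementary dictionary supplied by the standard basis $\{e_{g}\mid g\in M\}$: for any subset $S\subseteq M$ the family $\overline{S}=\{e_{s}\mid s\in S\}$ is linearly independent, so $\dim_{\mathbb{C}}\mathbb{C}\langle\overline{S}\rangle=|S|$; if $a\in U(M)$ then $e_{a}\in U(\mathbb{C}[M])$ with inverse $e_{a^{-1}}$, so $A\cap U(M)\neq\emptyset$ and $B\cap U(M)\neq\emptyset$ give $\mathbb{C}\langle\overline{A}\rangle\cap U(\mathbb{C}[M])\neq\emptyset$ and $\mathbb{C}\langle\overline{B}\rangle\cap U(\mathbb{C}[M])\neq\emptyset$; and, since $e_{b}e_{a}=e_{ba}$, one has $\overline{B}\,\overline{A}=\overline{BA}$ as subsets of $\mathbb{C}[M]$, whence $\mathbb{C}\langle\,\mathbb{C}\langle\overline{B}\rangle\,\mathbb{C}\langle\overline{A}\rangle\,\rangle=\mathbb{C}\langle\overline{BA}\rangle$ has dimension $|BA|$.

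With this dictionary in place, I would apply Proposition~\ref{Prop_connect} in $\mathcal{A}=\mathbb{C}[M]$ to the fixed subspace $V:=\mathbb{C}\langle\overline{A}\rangle$ and the parameter $\lambda\in(0,1]$. It produces a unique atom $\mathcal{H}_{\lambda}$ of $V$ containing $1$; by parts (1)--(2) this $\mathcal{H}_{\lambda}$ is a subalgebra of $\mathbb{C}[M]$ containing $\mathcal{H}_{l}(V)$, and $\mathcal{H}_{l}(V)=\mathcal{H}_{l}(A)$ since $x\overline{A}\subseteq\mathbb{C}\langle\overline{A}\rangle$ is equivalent to $x\,\mathbb{C}\langle\overline{A}\rangle\subseteq\mathbb{C}\langle\overline{A}\rangle$ by linearity. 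Applying part (4) to $W:=\mathbb{C}\langle\overline{B}\rangle$ (which meets $U(\mathbb{C}[M])$) bounds $\dim_{\mathbb{C}}\mathbb{C}\langle WV\rangle=|BA|$ from below by $\lambda\dim_{\mathbb{C}}(W)+\dim_{\mathbb{C}}(V)-\lambda\dim_{\mathbb{C}}(\mathcal{H}_{\lambda})$, and rewriting $\dim_{\mathbb{C}}(W)=|B|$, $\dim_{\mathbb{C}}(V)=|A|$ gives exactly the first asserted inequality; taking $\lambda=1$ yields the ``in particular'' bound.

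It remains to prove $\dim_{\mathbb{C}}(\mathcal{H}_{\lambda})\geq|H_{A}|$. For $h\in H_{A}$ the map $a\mapsto ha$ is a bijection of $A$ (as $hA=A$), so $e_{h}\overline{A}=\{e_{ha}\mid a\in A\}=\overline{A}\subseteq\mathbb{C}\langle\overline{A}\rangle$, hence $e_{h}\in\mathcal{H}_{l}(A)$; since $\{e_{h}\mid h\in H_{A}\}$ is linearly independent, $\dim_{\mathbb{C}}\mathcal{H}_{l}(A)\geq|H_{A}|$, and combining with $\mathcal{H}_{l}(A)\subseteq\mathcal{H}_{\lambda}$ from Proposition~\ref{Prop_connect}(2) gives the claim (and its $\lambda=1$ specialization). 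There is no serious obstacle here: the whole proof is a routine translation, and all the substantive content sits in Proposition~\ref{Prop_connect}. The only point deserving care is that, unlike the group case (Lemma~\ref{Lem_stab}), $\mathcal{H}_{l}(A)$ need not be the span of the $e_{h}$ with $h\in H_{A}$ — the example with $a^{2}=b^{2}=ab=ba$ in Section~7 shows it can be strictly larger — which is precisely why the conclusion must feature $\dim_{\mathbb{C}}(\mathcal{H}_{\lambda})$ rather than $|H_{A}|$.
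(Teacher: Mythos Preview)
Your argument is exactly the paper's: pass to $\mathbb{C}[M]$ (finite-dimensional, hence satisfying \textbf{H}$_{\mathrm{s}}$), apply Proposition~\ref{Prop_connect} with $V=\mathbb{C}\langle\overline{A}\rangle$ and $W=\mathbb{C}\langle\overline{B}\rangle$, and then use the chain $\mathbb{C}[H_A]\subseteq\mathcal{H}_{l}(A)\subseteq\mathcal{H}_{\lambda}$ for the dimension bound. One point worth flagging: Proposition~\ref{Prop_connect}(4) literally yields
\[
|BA|\;\geq\;\lambda\,|B|+|A|-\lambda\dim_{\mathbb{C}}(\mathcal{H}_{\lambda}),
\]
with the coefficient $\lambda$ attached to $\dim_{\mathbb{C}}(W)=|B|$, not to $|A|$; the theorem as printed has these transposed (harmless at $\lambda=1$), and your sentence ``gives exactly the first asserted inequality'' glosses over this mismatch rather than catching it.
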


\begin{proof}
Since $M$ is finite, $\mathbb{C}[M]$ verifies \textbf{H}$_{\mathrm{s}}$. We
apply Corollary \ref{Cor_hami} to $\overline{A}$ and $\overline{B}$. We have
$\dim_{\mathbb{C}}(\mathcal{H}_{\lambda})\geq\left\vert H\right\vert $ because
$\mathbb{C}[H]\subset\mathcal{H}_{l}(A)\subset\mathcal{H}_{\lambda}$.
%ATTENTION Pour appliquer ce thm, il faut supposer que \mathbb{C}[M]
%est de dimension finie ou une sous-algèbre d'une algèbre de Banach ou une sous-algèbre de produit de corps...
%ce qui n'est pas le cas si on ne suppose pas M fini.
\end{proof}

\begin{remark}
\ 

\begin{enumerate}
\item Contrary to the group setting, for $A$ a finite subset in $M$, the
subalgebra $\mathbb{A}[\overline{A}]$ of $\mathbb{C}[M]$ generated by
$\overline{A}$ can admit an infinite number of finite-dimensional subalgebras.
This is notably the case when $A$ contains a (non invertible) element $a$
generating a submonoid
\[
\langle a\rangle=\{1,a,\ldots,a^{m},a^{m+1},\ldots,a^{m+r-1}\}
\]
with $a^{m+r}=a^{m}$. Then $\mathbb{A}[\overline{A}]$ admits a subalgebra
isomorphic to $\mathbb{C}[X]/(X^{m+r}-X^{m})$. We get
\[
\mathbb{C}[X]/(X^{m+r}-X^{m})\simeq\mathbb{C}[X]/(X^{r}-1)\times
\mathbb{C}[X]/(X^{m})
\]
thus $\mathbb{C}[\overline{A}]$ has an infinite number of finite-dimensional
subalgebras as soon $m>3$ by Theorem \ref{TH_classification}. So we cannot
state a general monoid version of the Diderrich-Kneser theorem from Theorem
\ref{Th_Lin_Kne_n}.

\item But, we have the following version : let $M$ be a monoid whose elements are right regular and 
$A$ be a finite commutative subset of $M$ and $B$ a finite subset of $M$, then 
$|AB| + |H_{AB}|\geq |A|+|B|$. Indeed, $M$ may not be a submonoid of a group, but $A$ does since it is commutative. 
So $\mathbb{C}[\mathbb{A}(A)]$ is a subalgebra of a finitely generated commutative group and thus 
has only a finite number of subalgebra by Lemma~\ref{Lem_alg_gr}. Moreover, adapting the 
proof of Lemma~\ref{Lem_alg_gr_banach}, we get that $\mathbb{C}[M]$ is a subalgebra of a Banach algebra.
Finally, in this context Lemma~\ref{Lem_stab} still holds, since $e_{g'a} \neq e_{g''a}$ if $g' \neq g''$
(using notation of Lemma~\ref{Lem_stab}). Therefore, we can apply Theorem~\ref{Th_KL_Acom}.

\item It is also possible to get from the monoid algebra
$\mathbb{C}[M]$ monoid versions of Corollary \ref{CorOlsonWeak} and Theorem
\ref{Th_linTao}.\ They are left to the reader. Here also we have to use
finite-dimensional subalgebras of $\mathbb{C}[M]$ instead of submonoids of $M$.
\end{enumerate}
\end{remark}

\subsection{Finitely generated commutative monoids}

Let $M$ be a finitely generated commutative monoid.\ Its monoid algebra
$\mathbb{C}[M]$ is a finitely generated algebra (over $\mathbb{C}$). It thus
can be written as $\mathbb{C}[X_{1},\ldots,X_{r}]/I$ where $I$ is an ideal of
$\mathbb{C}[X_{1},\ldots,X_{r}]$. We now give a sufficient condition on the
components of the algebraic variety $V$ defined by $I$ to apply Theorem
\ref{Th_KL_Acom}.

Assume that $I$ is a radical ideal (or that $\mathbb{C}[M]$ is reduced) and
that the irreducible components of $V$ coincide with its connected components.
Then in this case $\mathbb{C}[M]$ is a finite product of integral algebras,
one for each irreducible component of $V$. Thus $\mathbb{C}[M]$ satisfies
hypothesis \textbf{H$_{\mathrm{w}}$} and we can apply Theorem \ref{Th_KL_Acom}
because we know by Lemma \ref{Lem_alg_gr} that it admits a finite number of
finite dimensional subalgebras.

To prove that $\mathbb{C}[M]$ is a finite product of integral algebras, write
$V=V_{1}\cup\cdots\cup V_{s}$ where the $V_{i}$ are the irreducible components
of $V$. For a subset $X$ of $\mathbb{C}^{r}$, write $I(X)$ for the ideal of
$\mathbb{C}[X_{1},\ldots,X_{r}]$ of polynomial vanishing on all $x\in X$. In
particular, we have by using the Nullstellensatz that $I=I(V)$ since
$\mathbb{C}[M]$ is reduced. We also get $I=I(V)=I(V_{1})\cap\cdots\cap
I(V_{s})$. Moreover, since the $V_{i}$ are the connected component of $V$, we
have $V_{i}\cap V_{j}=\emptyset$ for $i\neq j$. The Nullstellensatz also
ensures us that $I(V_{i})+I(V_{j})=\mathbb{C}[X_{1},\ldots,X_{r}]$
(see~\cite{Fulton}, Chapter 1 p.20). The Chinese remainder theorem allows us
to write
\[
\mathbb{C}[M]=\mathbb{C}[X_{1},\ldots,X_{r}]/I(V_{1})\cap\cdots\cap
I(V_{r})=\mathbb{C}[X_{1},\ldots,X_{r}]/I(V_{1})\times\cdots\times
\mathbb{C}[X_{1},\ldots,X_{r}]/I(V_{r})
\]
where $\mathbb{C}[X_{1},\ldots,X_{r}]/I(V_{j})$ is an integral domain since
$V_{j}$ is irreducible.

\bigskip

\noindent Laboratoire de Math\'{e}matiques et Physique Th\'{e}orique (UMR CNRS
6083)\newline Universit\'{e} Fran\c{c}ois-Rabelais, Tours \newline
F\'{e}d\'{e}ration de Recherche Denis Poisson - CNRS\newline Parc de
Grandmont, 37200 Tours, France.

\bigskip

\noindent Univ. Orl\'eans, MAPMO (UMR CNRS 7349), F-45067, Orl\'eans, France
\newline FDP - FR CNRS 2964

\end{document}